\documentclass[12pt]{amsart}
\usepackage{graphicx}
\usepackage{amsmath}
\usepackage{txfonts}
\usepackage{amsfonts}
\usepackage{amssymb}
\usepackage{setspace}
\usepackage{datetime}
\usepackage{color,enumitem,graphicx}
\usepackage[colorlinks=true,urlcolor=blue,
citecolor=red,linkcolor=blue,linktocpage,pdfpagelabels,
bookmarksnumbered,bookmarksopen]{hyperref}
\usepackage{geometry}
\usepackage{titlesec}

\newcommand{\mylargefont}{\fontsize{12}{16}\selectfont}

\titleformat{\section}
    [block]
    {\normalfont\Large\bfseries\centering}
    {\bfseries\thesection}
    {1em}
    {\boldmath}

\titleformat{\subsection}{\normalfont\normalsize\bfseries}{\bfseries\thesubsection}{1em}{\boldmath}
\titleformat{\subsubsection}{\normalfont\mylargefont\bfseries}{\bfseries\thesubsubsection}{1em}{\boldmath}
\allowdisplaybreaks[4]

\newtheorem{thm}{Theorem}[section]
\newtheorem{cor}[thm]{Corollary}

\newtheorem{lem}[thm]{Lemma}

\theoremstyle{definition}

\newtheorem{rem}[thm]{Remark}
\theoremstyle{conclusion}

\theoremstyle{question}
\numberwithin{equation}{section}
\newcommand{\be}{\begin{equation}}
	\newcommand{\ee}{\end{equation}}

\begin{document}

\title[Conformally invariant system with exponential growth and nonlocal nonlinear terms]{Mixed order conformally invariant system with exponential growth and nonlocal nonlinear terms in critical dimensions}

\author{Yiwu Chen, Wei Dai, Bin Huang}

\address{School of Mathematical Sciences, Beihang University (BUAA), Beijing 100191, P. R. China}
\email{chenyiwu@buaa.edu.cn}

\address{School of Mathematical Sciences, Beihang University (BUAA), Beijing 100191, P. R. China, and Key Laboratory of Mathematics, Informatics and Behavioral Semantics, Ministry of Education, Beijing 100191, P. R. China}
\email{weidai@buaa.edu.cn}

\address{School of Mathematical Sciences, Peking University, Beijing 100871, P. R. China}
\email{bhuang25@stu.pku.edu.cn}

\thanks{Yiwu Chen is supported by the NNSF of China (No. 12571113) and the Fundamental Research Funds for the Central Universities. Wei Dai is supported by the NNSF of China (No. 12222102 \& No. 12571113), the National Science and Technology Major Project (2022ZD0116401) and the Fundamental Research Funds for the Central Universities.}

\maketitle

\begin{abstract}
In this paper, under the extremely mild assumption $u(x)= O(|x|^{K})$ as $|x|\rightarrow+\infty$ for some $K\gg1$ arbitrarily large, we classify solutions of the following mixed order conformally invariant system with exponentially increasing and nonlocal nonlinearities in $\mathbb{R}^{n}$:
$$
\left\{
\begin{aligned}
(-\Delta)^{\frac{1}{2}}u & = e^{pv} \\
(-\Delta)^{\frac{n}{2}}v & = \left(\frac{1}{|x|^2}*u^2\right)u^2
\end{aligned}
\right.
\quad \text{in}\; \mathbb{R}^n,
$$
where $n=3,\,4$, $p>0$, $u\geqslant0$, $v(x)=o(|x|^2)$ as $|x|\to\infty$ and $u$ satisfies the finite total mass condition. The finite total mass condition can be deduced from either $u \in L^\frac{2n}{n-1}(\mathbb{R}^n)$ or $u \in \dot{H}^\frac{1}{2}(\mathbb{R}^n)$. This system is closely related to the conformally invariant equations $(-\Delta)^{\frac{1}{2}}u=\left(\frac{1}{|x|^{2}}*u^2\right)u$ and $(-\Delta)^{\frac{n}{2}}u=(n-1)!e^{nu}$ in $\mathbb{R}^{n}$ with $n=3,4$, which have been quite extensively studied.
\end{abstract}

\noindent\textbf{Keywords:} Classification result; Conformally invariant; Mixed order system; Exponentially increasing and nonlocal nonlinearity; Critical dimension.

\smallskip

\noindent{\bf 2020 AMS Subject Classifications: 35J92, 35B06, 35B32, 35B33.}

\section{Introductions}

In this paper, we investigate the following mixed order conformally invariant system with exponential increasing and nonlocal nonlinearities in $\mathbb{R}^n$:
\begin{equation}\label{system}
\left\{
\begin{aligned}
(-\Delta)^{\frac{1}{2}}u & = e^{pv} \\
(-\Delta)^{\frac{n}{2}}v & = \left(\frac{1}{|x|^2}*u^2\right)u^2
\end{aligned}
\right.
\quad \text{in}\; \mathbb{R}^n,
\end{equation}
where $n=3$ or $4$, $p>0$, $u\geqslant0$, $v(x)=o(|x|^2)$ as $|x|\to\infty$ and $u$ satisfies the finite total mass condition
\begin{equation}\label{fm}
\int_{\mathbb{R}^n}{\left(\frac{1}{|x|^2}*u^2\right)(y)u^2(y)\mathrm{d}y} = \iint_{\mathbb{R}^n\times\mathbb{R}^n}{\frac{u^2(x)u^2(y)}{|x-y|^2}\mathrm{d}x\mathrm{d}y}<+\infty.
\end{equation}
From the Hardy-Littlewood-Sobolev inequality (see Lemma \ref{lemma Hardy-Littlewood-Sobolev Inequality}) and the Sobolev embedding inequality, one has
$$
\iint_{\mathbb{R}^n\times\mathbb{R}^n}{\frac{u^2(x)u^2(y)}{|x-y|^2}\mathrm{d}x\mathrm{d}y}\leq C\|u\|^{4}_{L^\frac{2n}{n-1}(\mathbb{R}^n)}\leq C\|u\|^{4}_{\dot{H}^\frac{1}{2}(\mathbb{R}^n)},
$$
thus the finite total mass condition can be derived from either the integrable assumption $u \in L^\frac{2n}{n-1}(\mathbb{R}^n)$ or the natural assumption for weak solution $u \in \dot{H}^\frac{1}{2}(\mathbb{R}^n)$. System \eqref{system} is invariant under the scaling $u_{\lambda}(x):=\lambda^{\frac{n-1}{2}}u(\lambda x)$ and $v_{\lambda}(x):=v(\lambda x)+\frac{n+1}{2p}\ln\lambda$ ($\forall \, \lambda>0$).

The square root of the Laplacian $(-\Delta)^{\frac{1}{2}}$ represents a special case of fractional Laplacian $(-\Delta)^{\frac{\alpha}{2}}$ with $\alpha = 1$. For any $u \in C^{[\alpha],\{\alpha\}+\varepsilon}_{loc}(\mathbb{R}^n) \cap \mathcal{L}_\alpha(\mathbb{R}^n)$ with $n \geqslant 1$, the nonlocal operator $(-\Delta)^{\frac{\alpha}{2}}$ ($0 < \alpha < 2$) is defined by (see \cite{CT,CL2,CLM,DQ,S})
\begin{equation}\label{nonlocal defn}
(-\Delta)^{\frac{\alpha}{2}}u(x) = C_{n,\alpha} \, P.V.\int_{\mathbb{R}^n}\frac{u(x)-u(y)}{|x-y|^{n+\alpha}}\mathrm{d}y
:= C_{n,\alpha}\lim_{\varepsilon\to0}\int_{|y-x|\geqslant\varepsilon}\frac{u(x)-u(y)}{|x-y|^{n+\alpha}}\mathrm{d}y,
\end{equation}
where $[\alpha]$ denotes the integer part of $\alpha$, $\{\alpha\} := \alpha-[\alpha]$, the constant
$$
C_{n,\alpha} = \left(\int_{\mathbb{R}^n} \frac{1-\cos\left(2\pi\zeta_1\right)}{|\zeta|^{n+\alpha}}\mathrm{d}\zeta\right)^{-1}
$$
and the (slowly increasing) function space
\begin{equation}\label{0-1-space}
\mathcal{L}_\alpha(\mathbb{R}^n) := \left\{u:\mathbb{R}^n\to\mathbb{R} \left| \int_{\mathbb{R}^n}\frac{|u(x)|}{1+|x|^{n+\alpha}}\mathrm{d}x < +\infty \right.\right\}.
\end{equation}

The fractional Laplacians $(-\Delta)^{\frac{\alpha}{2}}$ can be defined equivalently via Caffarelli and Silvestre's extension method (see \cite{CS,CLM}) for $u \in C^{[\alpha],\{\alpha\}+\varepsilon}_{loc}(\mathbb{R}^n) \cap \mathcal{L}_\alpha(\mathbb{R}^n)$. For instance, the square root of the Laplacian $(-\Delta)^{\frac{1}{2}}$ can be defined equivalently for any $u \in C^{1,\varepsilon}_{loc}(\mathbb{R}^n) \cap \mathcal{L}_1(\mathbb{R}^n)$ by
\begin{equation}\label{extension}
(-\Delta)^{\frac{1}{2}}u(x) := -C_n\lim_{y\to0^+}\frac{\partial U(x,y)}{\partial y} = -C_n\lim_{y\to0^+}\int_{\mathbb{R}^n}\frac{|x-\xi|^2-ny^2}{\left(|x-\xi|^2+y^2\right)^{\frac{n+3}{2}}}u(\xi)\mathrm{d}\xi,
\end{equation}
where $U(x,y)$ is the harmonic extension of $u(x)$ in $\mathbb{R}^{n+1}_+ = \{(x,y)| \, x \in \mathbb{R}^n , \, y \geqslant 0 \}$.

The fractional Laplacian $(-\Delta)^{\frac{\alpha}{2}}$ is a nonlocal integral-differential operator. It can be used to model diverse physical phenomena, such as anomalous diffusion and quasi-geostrophic flows, turbulence and water waves, molecular dynamics, and relativistic quantum mechanics of stars (see \cite{CV,Co} and the references therein). It also has various applications in conformal geometry, probability and finance (see \cite{Be,CT,CG} and the references therein). In particular, $(-\Delta)^{\frac{\alpha}{2}}$ with $0<\alpha<2$ can also be understood as the infinitesimal generator of a stable Lévy process (see \cite{Be}).

We define $(-\Delta)^{\frac{1}{2}}u$ and $(-\Delta)^{\frac{3}{2}}v := (-\Delta)(-\Delta)^{\frac{1}{2}}v = (-\Delta)^{\frac{1}{2}}(-\Delta)v$ by definition \eqref{nonlocal defn} and its equivalent definition \eqref{extension} for $u \in C^{1,\varepsilon}_{loc}(\mathbb{R}^n) \cap \mathcal{L}_1(\mathbb{R}^n)$ and $v \in C_{loc}^{3,\varepsilon}(\mathbb{R}^n)$ such that $v$ or $\Delta v \in \mathcal{L}_1(\mathbb{R}^n)$, where $\varepsilon>0$ is arbitrarily small. Due to the nonlocal virtue of $(-\Delta)^{\frac{1}{2}}$, we need the assumption $u \in C^{1,\varepsilon}_{loc}(\mathbb{R}^n)$ and $v \in C_{loc}^{3,\varepsilon}(\mathbb{R}^n)$ with arbitrarily small $\varepsilon>0$ (merely $u \in C^1$ and $v \in C^3$ are not enough) to guarantee that $(-\Delta)^{\frac{1}{2}}u \in C(\mathbb{R}^n)$ and $(-\Delta)^{\frac{3}{2}}v \in C(\mathbb{R}^n)$ (see \cite{CLM,S}).

Throughout this paper, we assume $(u,v)$ is a pair of classical solution to the $3$-D or $4$-D system \eqref{system} in the sense that $u \in C^{1,\varepsilon}_{loc}(\mathbb{R}^n) \cap \mathcal{L}_1(\mathbb{R}^n)$ with arbitrarily small $\varepsilon > 0$, $v\in C^{4}(\mathbb{R}^{n})$ if $n=4$, and $v \in C_{loc}^{3,\varepsilon}(\mathbb{R}^n)$ with arbitrarily small $\varepsilon > 0$ such that $v$ or $\Delta v \in \mathcal{L}_1(\mathbb{R}^n)$ if $n=3$. Consequently, $(-\Delta)^{\frac{1}{2}}u$ and $(-\Delta)^{\frac{n}{2}}v$ are pointwise well-defined and continuous in the whole space $\mathbb{R}^n$.

The system \eqref{system} with the finite total mass condition \eqref{fm} is closely related to the following two single conformally invariant equations:
\begin{equation}\label{Seperate Equation 1}
(-\Delta)^\frac{1}{2}{u} = \left(\frac{1}{|x|^2}*u^2\right)u \qquad \text{in} \; \mathbb{R}^n,
\end{equation}
\begin{equation}\label{Seperate Equation 2}
(-\Delta)^\frac{n}{2}{u} = (n-1)!e^{nu} \qquad \text{in} \; \mathbb{R}^n
\end{equation}
with the finite total mass condition $\int_{\mathbb{R}^n}e^{nu}\mathrm{d}x<+\infty$.

The Hartree equation \eqref{Seperate Equation 1} is a typical special form of the following physically interesting static Schr\"{o}dinger-Hartree-Maxwell type equations involving higher-order or higher-order fractional Laplacians:
\begin{equation}\label{PDES}
(-\Delta)^{s}u(x)=\left(\frac{1}{|x|^{\sigma}}\ast u^{p}\right)u^{q}(x) \qquad \text{in} \; \mathbb{R}^{n},
\end{equation}
where $n\geq1$, $0<s<\frac{n}{2}$, $0<\sigma<n$, $0<p\leq\frac{2n-\sigma}{n-2s}$ and $0<q\leq\frac{n+2s-\sigma}{n-2s}$. When $\sigma=4s$, $p=2$, $0<q\leq1$, \eqref{PDES} is called static Schr\"{o}dinger-Hartree type equations. When $\sigma=n-2s$, $p=\frac{n+2s}{n-2s}$, $0<q\leq\frac{4s}{n-2s}$, \eqref{PDES} is known as static Schr\"{o}dinger-Maxwell type equations. When $p=\frac{2n-\sigma}{n-2s}$ and $q=\frac{n+2s-\sigma}{n-2s}$, we say equation \eqref{PDES} is conformally invariant or has critical growth (on nonlinearity).

One should observe that both the fractional Laplacians and the convolution type nonlinearity are nonlocal in equations \eqref{Seperate Equation 1} and \eqref{PDES}, which are called double nonlocal problems. PDEs of type \eqref{PDES} arise in the Hartree-Fock theory of the nonlinear Schrödinger equations (see \cite{LS}). The solution $u$ to problem \eqref{PDES} is also a ground state or a stationary solution to the following dynamic Schrödinger-Hartree equation
\begin{equation}\label{Hartree}
i\partial_{t}u+(-\Delta)^{s}u=\left(\frac{1}{|x|^{\sigma}}\ast |u|^{p}\right)|u|^{q-1}u, \qquad (t,x)\in\mathbb{R}\times\mathbb{R}^{n}
\end{equation}
involving higher-order or higher-order fractional Laplacians. The higher order and fractional order Schr\"{o}dinger-Hartree equations have many interesting applications in the quantum theory of large systems of non-relativistic bosonic atoms and molecules and the theory of laser propagation in medium (see, e.g. \cite{LMZ} and the references therein). The qualitative properties of solutions to fractional order or higher order Hartree or Choquard type equations have been extensively studied, for instance, see \cite{AGSY,CD,CDZ,CW,DFHQW,DFQ,DL,DLQ,DQ,Lieb,Liu,MZ,MS} and the references therein. In particular, for the classification result for Hartree equation \eqref{Seperate Equation 1} and equations or systems involving Hartree type nonlocal nonlinearities, refer to e.g. \cite{CD,CD0,DFHQW,DFQ,DF,DLL,DL,DLQ,DQ,DQ2}.

The equation \eqref{Seperate Equation 2} is the limiting form for critical order $\alpha=n$ of the following integer-order or fractional higher-order geometrically conformally invariant equation arising from geometric analysis:
\begin{equation}\label{GPDE}
(-\Delta)^{\frac{\alpha}{2}}u = u^{\frac{n+\alpha}{n-\alpha}} \qquad \text{in} \; \mathbb{R}^n,
\end{equation}
where $n\geqslant1$ and $\alpha \in (0,n)$. We say \eqref{GPDE} has subcritical or super-critical order if $\alpha<n$ or $\alpha>n$ respectively. In the special case $\alpha = 2<n$, equation \eqref{GPDE} is the the well-known Yamabe problem. In higher order case that $\alpha\geqslant2$ is an even integer, equation \eqref{GPDE} arises from the conformal metric problems, prescribing $Q$-curvature problems, conformally covariant Paneitz operators and GJMS operators and so on ... (see e.g. \cite{CY,CL,CL1,CL2,GJMS,Juhl,Lin,Li,P,WX,Xu,Z} and the references therein). In the fractional order or fractional higher order case that $\alpha \in (0,n) \backslash 2\mathbb{N}$, conformally invariant equation \eqref{GPDE} is closely related to the fractional $Q$-curvature problems and the study of fractional conformally covariant Paneitz and GJMS operators and so on ... (cf. e.g. \cite{CC,CG,JLX1} and the references therein).

The quantitative and qualitative properties of solutions to conformally invariant equations \eqref{GPDE} have been extensively studied. In the subcritical order case $\alpha \in (0,n)$, for classification results of positive classical solutions to equation \eqref{GPDE}, please see Gidas, Ni and Nirenberg \cite{GNN1} and Caffarelli, Gidas and Spruck \cite{CGS} for $\alpha = 2$, Lin \cite{Lin} for $\alpha = 4$, Wei and Xu \cite{WX} for even integer $\alpha \in (0,n)$, Chen, Li and Li \cite{CLL}, Chen, Li and Zhang \cite{CLZ} and Jin, Li and Xiong \cite{JLX} for $0<\alpha<2$, Dai and Qin \cite{DQ} for $\alpha = 3$, and Cao, Dai and Qin \cite{CDQ0} for any real number $\alpha \in (0,n)$. In \cite{CLO}, by developing the method of moving planes in integral forms, Chen, Li and Ou classified all the positive $L^{\frac{2n}{n-\alpha}}_{loc}$-solutions to the equivalent integral equation of the PDE \eqref{GPDE} for general $\alpha \in (0,n)$, as a consequence, they obtained the classification results for positive weak solutions to PDE \eqref{GPDE}. In the super-critical order cases, for classification results of positive classical solutions to equation \eqref{GPDE} and related IE with negative exponents, please refer to \cite{DFu,DHL,Li,N,Xu} and the references therein.

The case $n = \alpha$ is called the limiting case (or the critical order case), which is more technically difficult since the fundamental solution $c_n\ln\frac{1}{|x|}$ for $(-\Delta)^{\frac{n}{2}}$ tends to $-\infty$ as $|x|\to+\infty$ and changes signs. When $n = \alpha = 2$, by using the method of moving planes, Chen and Li \cite{CL} classified all the $C^2$ smooth solutions with finite total curvature of the Liouville equation
\begin{equation}\label{0-1}
\left\{
\begin{aligned}
& -\Delta u(x) = e^{2u(x)}, \qquad x \in \mathbb{R}^2, \\
& \int_{\mathbb{R}^2}e^{2u(x)}\mathrm{d}x<+\infty.
\end{aligned}
\right.
\end{equation}
They proved that there exists some point $x_{0} \in \mathbb{R}^2$ and some $\lambda>0$ such that
$$
u(x) = \ln\left[\frac{2\lambda}{1+\lambda^2|x-x_{0}|^2}\right] .
$$
Equations of type \eqref{0-1} arise from a variety of situations, such as from prescribing Gaussian curvature in geometry and from combustion theory in physics. For conformally invariant systems of nonlinear PDEs of Liouville type, please see \cite{CK}.

Let $g_{\mathbf{S}^2}$ be the standard metric on the unit $2$-sphere $\mathbf{S}^2$. If we consider the conformal metric $\hat{g} := e^{2w}g_{\mathbf{S}^2}$, then the Gaussian curvature $K_{\hat{g}}$ satisfies the following PDE:
\begin{equation}\label{g1}
\Delta_{g_{\mathbf{S}^2}}w+K_{\hat{g}}e^{2w} = 1 \qquad \text{on} \; \mathbf{S}^2,
\end{equation}
where $\Delta_{g_{\mathbf{S}^2}}$ denotes the Laplace-Beltrami operator with respect to the standard metric $g_{\mathbf{S}^2}$ on the sphere $\mathbf{S}^2$. In particular, if we take $K_{\hat{g}}\equiv1$ in \eqref{system}, then $\hat{g} := e^{2w}g_{\mathbf{S}^2}$ is the pull back of the standard metric $g_{\mathbf{S}^2}$ through some conformal transformation $\phi$ (i.e., $\hat{g}$ is isometric to $g_{\mathbf{S}^2}$). Through the stereographic projection $\pi$ from $\mathbf{S}^2$ to $\mathbb{R}^2$, one can see that equation \eqref{0-1} on $\mathbb{R}^2$ is equivalent to the equation \eqref{g1} on $\mathbf{S}^2$ with $K_{\hat{g}}\equiv1$.

In general, on $(\mathbf{S}^n,g_{\mathbf{S}^n})$, if we change the standard metric $g_{\mathbf{S}^n}$ to its conformal metric $\hat{g} := e^{nw}g_{\mathbf{S}^n}$ for some smooth function $w$ on the $n$-sphere $\mathbf{S}^n$, since the GJMS operator $P_{n,g}$ is conformally covariant (c.f. \cite{GJMS,P}), it turns out that there exists some scalar curvature quantity $Q_{n,g}$ of order $n$ such that
\begin{equation}\label{g3}
-P_{n,g_{\mathbf{S}^n}}(w)+Q_{n,\hat{g}}e^{nw} = Q_{n,g_{\mathbf{S}^n}} \qquad \text{on} \; \mathbf{S}^n,
\end{equation}
where the explicit formula for $P_{n,g_{\mathbf{S}^n}}$ on $\mathbf{S}^n$ with general integer $n \in \mathbb{N}^+$ is given by (c.f. \cite{Juhl}):
\begin{equation}\label{GJMS-1,2}
P_{n,g_{\mathbf{S}^n}} =
\left\{
\begin{aligned}
& \prod_{k = 1}^{\frac{n}{2}}\left[-\Delta_{g_{\mathbf{S}^n}}+\left(\frac{n}{2}-k\right)\left(\frac{n}{2}+k-1\right)\right], & \text{if} \; n \; \text{is even}, \\
& \left[-\Delta_{g_{\mathbf{S}^n}}+\frac{(n-1)^2}{4}\right]^{\frac{1}{2}}
\prod_{k = 1}^{\frac{n-1}{2}}\left[-\Delta_{g_{\mathbf{S}^n}}+\frac{(n-1)^2}{4}-k^2\right], & \text{if} \; n \; \text{is odd}.
\end{aligned}
\right.
\end{equation}
If the metric $\hat{g}$ is isometric to the standard metric $g_{\mathbf{S}^n}$, then $Q_{n,\hat{g}} = Q_{n,g_{\mathbf{S}^n}} = (n-1)!$, and hence \eqref{g3} becomes
\begin{equation}\label{g4}
-P_{n,g_{\mathbf{S}^n}}(w)+(n-1)!e^{nw} = (n-1)! \qquad \text{on} \; \mathbf{S}^n.
\end{equation}
We can reformulate the equation \eqref{g4} on $\mathbb{R}^n$ by applying the stereographic projection. Let us denote by $\pi: \, \mathbf{S}^n\to \mathbb{R}^n$ the stereographic projection which maps the south pole on $\mathbf{S}^n$ to $\infty$. That is, for any $\zeta = (\zeta_1,\cdots,\zeta_{n+1}) \in \mathbf{S}^n\subset\mathbb{R}^{n+1}$ and $x = \pi(\zeta) = (x_1,\cdots,x_n) \in \mathbb{R}^n$, then it holds $\zeta_{k} = \frac{2x_{k}}{1+|x|^2}$ for $1\leqslant k\leqslant n$ and $\zeta_{n+1} = \frac{1-|x|^2}{1+|x|^2}$. Suppose $w$ is a smooth function on $\mathbf{S}^n$, define the function $u(x) := \phi(x)+w(\zeta)$ for any $x \in \mathbb{R}^n$, where $\zeta := \pi^{-1}(x)$ and $\phi(x) := \ln\left[\frac{2}{1+|x|^2}\right] = \ln\left|J_{\pi^{-1}}\right|$. Since the GJMS operator $P_{n,g_{\mathbf{S}^n}}$ is the pull back under $\pi$ of the operator $(-\Delta)^{\frac{n}{2}}$ on $\mathbb{R}^n$ (see Theorem 3.3 in \cite{Branson1}), $w$ satisfies the equation \eqref{g4} on $\mathbf{S}^n$ if and only if the function $u$ satisfies
\begin{equation}\label{g5}
(-\Delta)^{\frac{n}{2}}u = (n-1)!e^{nu} \qquad \text{in} \,\, \mathbb{R}^n.
\end{equation}

In \cite{CY}, for general integer $n$, Chang and Yang classified the $C^n$ smooth solutions to the critical order equations \eqref{g5} under decay conditions near infinity
\begin{equation}\label{g0}
u(x) = \ln\left[\frac{2}{1+|x|^2}\right]+w\left(\zeta(x)\right)
\end{equation}
for some smooth function $w$ defined on $\mathbf{S}^n$. When $n = \alpha = 4$, Lin \cite{Lin} proved the classification results for all the $C^4$ smooth solutions of
\begin{equation}\label{0-2}
\left\{
\begin{aligned}
&\begin{aligned}
\Delta^2u(x) = 6e^{4u(x)}, \qquad x \in \mathbb{R}^4,
\end{aligned}\\
&\begin{aligned}
\int_{\mathbb{R}^4}e^{4u(x)}\mathrm{d}x<+\infty, \quad  u(x) = o\left(|x|^2\right) \quad \text{as} \quad |x|\to+\infty.
\end{aligned}
\end{aligned}
\right.
\end{equation}
When $n = \alpha$ is an even integer, Wei and Xu \cite{WX} classified the $C^n$ smooth solutions of \eqref{g5} with finite total curvature $\int_{\mathbb{R}^n}e^{nu(x)}\mathrm{d}x<+\infty$ under the assumption $u(x) = o\left(|x|^2\right)$ as $|x|\to+\infty$. Zhu \cite{Z} classified all the classical solutions with finite total curvature of the problem
\begin{equation}\label{0-4}
\left\{
\begin{aligned}
&\begin{aligned}
(-\Delta)^{\frac{3}{2}}u(x) = 2e^{3u(x)}, \qquad x \in \mathbb{R}^3,
\end{aligned}\\
&\begin{aligned}
\int_{\mathbb{R}^3}e^{3u(x)}\mathrm{d}x<+\infty, \quad u(x) = o\left(|x|^2\right) \quad \text{as} \quad |x|\to+\infty.
\end{aligned}
\end{aligned}
\right.
\end{equation}
The equation \eqref{0-4} can also be regarded as the following system with mixed order:
\begin{equation}\label{0-4s}
\left\{
\begin{aligned}
&\begin{aligned}
(-\Delta)^{\frac{1}{2}}u(x) &= 2e^{3v(x)}, & x \in \mathbb{R}^3, \\
-\Delta v(x) &= u(x), & x \in \mathbb{R}^3,
\end{aligned}\\
&\begin{aligned}
\int_{\mathbb{R}^3}e^{3v(x)}\mathrm{d}x<+\infty, \quad v(x) = o\left(|x|^2\right) \quad \text{as} \quad |x|\to+\infty.
\end{aligned}
\end{aligned}
\right.
\end{equation}
Yu \cite{Yu} classified $(u,v)\in C^2(\mathbb{R}^4)\times C^4(\mathbb{R}^4)$ to the following conformally invariant system
\begin{equation}\label{Yu-s}
\left\{
\begin{aligned}
&\begin{aligned}
-\Delta u(x) &= e^{3v(x)}, & x \in \mathbb{R}^4, \\
\Delta^2v(x) &= u^4(x), & x \in \mathbb{R}^4, \\
u(x) &\geqslant 0, & x \in \mathbb{R}^4,
\end{aligned}\\
&\begin{aligned}
\int_{\mathbb{R}^4}u^4(x)\mathrm{d}x &< +\infty, \quad
\int_{\mathbb{R}^4}e^{3v(x)}\mathrm{d}x< +\infty,
\end{aligned}\\
&\begin{aligned}
v(x) = o\left(|x|^2\right) \quad \text{as} \quad |x|\to+\infty.
\end{aligned}
\end{aligned}
\right.
\end{equation}
Subquently, Dai and Qin \cite{DQ} classified all solutions $(u,v)$ to the following planar mixed order conformally invariant system:
\begin{equation}\label{PDE-2d}
\left\{
\begin{aligned}
&\begin{aligned}
(-\Delta)^{\frac{1}{2}}u(x) &= e^{pv(x)}, & x \in \mathbb{R}^2, \\
-\Delta v(x) &= u^4(x), & x \in \mathbb{R}^2, \\
u(x) &\geqslant 0, & x \in \mathbb{R}^2,
\end{aligned}\\
&\begin{aligned}
\int_{\mathbb{R}^2}u^4(x)\mathrm{d}x<+\infty, \quad u(x) = O\left(|x|^{K}\right) \quad \text{as} \quad |x|\to+\infty,
\end{aligned}
\end{aligned}
\right.
\end{equation}
where $(u,v)\in \left(C^{1,\varepsilon}_{loc}(\mathbb{R}^2)\cap \mathcal{L}_1(\mathbb{R}^2)\right)\times C^2(\mathbb{R}^2)$ with arbitrarily small $\varepsilon>0$, $p \in (0,+\infty)$ and $K\gg1$ is arbitrarily large. For more classification results on mixed order conformally invariant systems, please c.f. \cite{CGP,DDZ,DF,DQ2,GP,GP1,HN,Yu}. For more literatures on the quantitative and qualitative properties of solutions to fractional order or higher order conformally invariant PDE and IE problems, please refer to \cite{BKN,CT,CL,CL2,DQ,JLX1,LZ1,LZ} and the references therein. For more literatures on the classification of solutions and Liouville type theorems for various PDE and IE problems via the methods of moving planes or spheres and the method of scaling spheres, please refer to \cite{CGS,CDQ0,CY,CL,CL1,CL0,CL2,CLL,CLO,CLZ,DHL,DLQ,DQ,DQ0,GNN1,JLX,JLX1,Lin,Li,LZ1,LZ,Pa,WX,Xu,Yu,Z} and the references therein.

\bigskip

Our classification result for the $3,4$-D system \eqref{system} is the following theorem.
\begin{thm}\label{theorem Main Result}
Assume $n=3$ or $4$ and $p>0$. Let $(u,v)$ be a pair of classical solutions to the system \eqref{system} such that $u \geqslant 0$, $v(x)=o(|x|^2)$ as $|x|\rightarrow+\infty$ and $u$ satisfies the finite total mass condition \eqref{fm}. Suppose $u(x) = O(|x|^K)$ as $|x|\rightarrow+\infty$ for some $K \gg 1$ arbitrarily large. Then $(u,v)$ must take the form
\begin{equation*} u(x)=\frac{2\left(\frac{2}{p}\right)^{\frac{1}{4}}\mu}{\sqrt{\pi}\left(1+\mu^{2}|x-x_{0}|^{2}\right)},\ \,\,\,\,\, \,\,\, v(x)=\frac{2}{p}\ln\left(\frac{\frac{2}{\sqrt[4]{\pi}}\left(\frac{2}{p}\right)^{\frac{1}{8}}\mu}
{1+\mu^{2}|x-x_{0}|^{2}}\right), \qquad \text{if} \,\, n=3,
\end{equation*}
\begin{align*}\label{eq42}
u(x)=\frac{2}{\sqrt{\pi}}\left(\frac{30}{p}\right)^{\frac{1}{4}}\left(\frac{\mu}{1+\mu^{2}|x-x_{0}|^{2}}\right)^
\frac{3}{2}, \,\,\,\, \,\,\,\,\,\, v(x)=\frac{5}{2p}\ln\left(\frac{\frac{\sqrt{6}}{\sqrt[5]{\pi}}\left(\frac{5}{p}\right)^{\frac{1}{10}}\mu}
{1+\mu^{2}|x-x_{0}|^{2}}\right), \qquad \text{if} \,\, n=4
\end{align*}
for some $\mu>0$ and some $x_{0}\in \mathbb{R}^{n}$.
\end{thm}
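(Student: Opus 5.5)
We follow the strategy of Dai and Qin for the planar system \eqref{PDE-2d} and of Yu for \eqref{Yu-s}. The argument has three stages: pass to an integral representation, classify by the method of moving spheres, then determine the constants.

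\textbf{Step 1: integral representation of $v$.} Set $f:=\left(\frac{1}{|x|^2}*u^2\right)u^2\geqslant0$, which lies in $L^1(\mathbb{R}^n)$ by \eqref{fm}. Define
$$
w(x):=\frac{1}{\gamma_n}\int_{\mathbb{R}^n}\ln\left[\frac{|y|}{|x-y|}\right]f(y)\,\mathrm{d}y,
$$
where $\gamma_n$ is chosen so that $(-\Delta)^{\frac{n}{2}}w=f$. This uses the fundamental solution $c_n\ln\frac{1}{|x|}$, which is valid for $n=3$ in the pseudo-differential sense. Then $h:=v-w$ satisfies $(-\Delta)^{\frac{n}{2}}h=0$. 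For $n=4$, $h$ is biharmonic. For $n=3$, either $\Delta h$ is $\frac12$-harmonic in $\mathcal{L}_1$, or $h$ is, and so is harmonic by the Liouville theorem for $(-\Delta)^{\frac12}$. Combining $v=o(|x|^2)$ with the bound $w(x)\geqslant-\frac{\|f\|_{L^1}}{\gamma_n}\ln|x|-C$ for large $|x|$, a Liouville argument in the style of Lin and Wei--Xu gives that $h$ is a polynomial of degree at most $2$ which is bounded above. In that case $h(x)=-\sum a_{ij}x_ix_j+\dots$ with $(a_{ij})\geqslant0$.

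\textbf{Step 2: integral representation of $u$.} We show $u=\frac{C}{|x|^{n-1}}*e^{pv}$. Since $u\geqslant0$, $u\in\mathcal{L}_1$ and $u=O(|x|^K)$, the function $u-\frac{C}{|x|^{n-1}}*e^{pv}$ is $\frac12$-harmonic. By the Liouville theorem for $(-\Delta)^{\frac12}$ with polynomial growth in $\mathcal{L}_1$, it is a constant $c\geqslant0$. The polynomial growth assumption $u=O(|x|^K)$ is used here and in making $e^{pv}\in L^1$ meaningful. Finite total mass \eqref{fm} forces $c=0$, since otherwise $\iint u^2u^2|x-y|^{-2}=\infty$. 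To prove $\int e^{pv}<\infty$ we expect to need a blow-up or comparison argument. Once $u$ decays, the nondegenerate directions of $h$ must vanish. Indeed, if $h$ had a nontrivial quadratic part, one would get $e^{pv}$ decaying like a Gaussian in some directions while $u$ is still determined; the standard argument uses Pohozaev or the asymptotic $w(x)=-\frac{\|f\|_{L^1}}{\gamma_n}\ln|x|+o(\ln|x|)$ to show $h$ is constant. This step is \textbf{the main obstacle}. Controlling $h$ without a priori integrability of $e^{pv}$, together with the precise asymptotics of $w$ (which requires the estimates $f(y)\lesssim|y|^{-2n}$ obtained from the decay of $u$), is the delicate part. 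We expect to do it by first deriving $u(x)\sim C|x|^{-(n-1)}$ from the integral equation and then bootstrapping.

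\textbf{Step 3: moving spheres and constants.} With $u=\frac{C_1}{|x|^{n-1}}*e^{pv}$ and $v=\frac{1}{\gamma_n}\int\ln\frac{|y|}{|x-y|}f+C_0$, we also have $v(x)=-\alpha\ln|x|+o(\ln|x|)$, where $\alpha=\|f\|_{L^1}/\gamma_n$. We then apply the method of moving spheres to the integral system. Define the Kelvin transforms
$$
u_{x_0,\lambda}(x)=\left(\frac{\lambda}{|x-x_0|}\right)^{n-1}u(x^{x_0,\lambda}),\qquad v_{x_0,\lambda}(x)=v(x^{x_0,\lambda})+\frac{n+1}{p}\ln\frac{\lambda}{|x-x_0|}.
$$
Using conformal invariance of both kernels, including the nonlocal Hartree term, which transforms covariantly under inversion, we show the comparison $u_{x_0,\lambda}\leqslant u$, $v_{x_0,\lambda}\leqslant v$ outside $B_\lambda(x_0)$ for small $\lambda$. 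The needed asymptotics are $u\sim|x|^{-(n-1)}$ and $v\sim-\frac{n+1}{p}\ln|x|$ with the right $\alpha$. To get $\alpha=\frac{n+1}{p}$ we use a Pohozaev identity for the integral system, which is also where finite total mass is used again. The spheres can then be moved until a critical $\lambda_{x_0}<\infty$ at which equality holds. The standard calculus lemma of Li--Zhu then forces $u$ to be of the form $c\left(\frac{\mu}{1+\mu^2|x-x_0|^2}\right)^{\frac{n-1}{2}}$ and $e^{pv}$ to be of the form $c'\left(\frac{\mu}{1+\mu^2|x-x_0|^2}\right)^{\frac{n+1}{2}}$. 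Substituting into \eqref{system} and using the explicit formulas for $(-\Delta)^{\frac12}$ of these bubbles, and for the Hartree convolution $\frac{1}{|x|^2}*u^2$, determines the constants stated in Theorem \ref{theorem Main Result}.
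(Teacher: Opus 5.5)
Your architecture (integral representation, then moving spheres, then constants) matches the paper's. However, the two steps you leave open are where the proof actually lives, and what you propose for them is either circular or unsupported.

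The first gap is the representation of $u$. You place it after $v$ and say you need $\int e^{pv}<\infty$ from some blow-up or comparison argument. Until then, $|x|^{1-n}*e^{pv}$ is not known to be finite, so neither your Liouville step nor the sign $c\geqslant0$ is justified. The missing idea is that positivity of $u$ gives the needed integrability for free, with no information on $v$.
\begin{itemize}
\item Let $u_R(x)=\int_{B(0,R)}G_R(x,y)e^{pv(y)}\,dy$, with $G_R$ the Green function of Lemma \ref{lemma Green Functions}. Then $(-\Delta)^{\frac12}(u-u_R)=0$ in $B(0,R)$ and $u-u_R=u\geqslant0$ outside, so Lemma \ref{lemma Maximum Principle} gives $u\geqslant u_R$.
\item Letting $R\to\infty$ yields $u\geqslant\bar u:=c_n|x|^{1-n}*e^{pv}$, hence $\int e^{pv(y)}|y|^{1-n}\,dy\leqslant Cu(0)<\infty$.
\item Then $u-\bar u$ is a \emph{nonnegative} $\frac12$-harmonic function, so it is constant by Lemma \ref{lemma Liouville Theorem} and zero by \eqref{fm}.
\end{itemize}
For the representation formulas only this weighted integrability is needed, not $e^{pv}\in L^1$, and it should come first. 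With it, your ``main obstacle'' for $h=v-w$ disappears:
\begin{itemize}
\item The quadratic part is excluded by the two-sided hypothesis $v=o(|x|^2)$ (Jensen on spheres plus the maximum-principle argument of Lemma \ref{lemma Precise Integral Representation of Delta v for n = 3}).
\item $h$ is then affine by Lemma \ref{lemma Classification of Harmonic Functions}(ii).
\item A linear part $a\cdot x$ would give $e^{pv}\geqslant c|x|^{-p\alpha}e^{pa\cdot x}$, contradicting $|x|^{1-n}e^{pv}\in L^1$.
\end{itemize}
Likewise, your plan to get $w=-\alpha\ln|x|+o(\ln|x|)$ from $f\lesssim|y|^{-2n}$ is circular, since decay of $u$ already requires the asymptotics of $v$. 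The paper proves $\zeta(x)/\ln|x|\to-\alpha$ (its $\zeta$ is your $w$) from $f\in L^1$ alone. It controls the logarithmic singularity on $B(x,1)$ by the $\exp^L+L\ln L$ inequality (Lemma \ref{lemma expL+LlnL Inequality}). This is precisely where $u=O(|x|^K)$ enters: it makes $\ln(f+1)=O(\ln|x|)$ there.

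The second gap is the value of $\alpha$. You assert that a Pohozaev identity for the integral system gives $\alpha=\frac{n+1}{p}$, but give no derivation, and the usual mechanism is not available. The system is not of Hamiltonian type, since the two operators have different orders. Symmetrizing the two kernels and integrating by parts only yields $\int(x\cdot\nabla v)f=-\frac{\alpha}{2}\int f$ and $\int(x\cdot\nabla v)ue^{pv}=-\frac{n+1}{2p}\int ue^{pv}$. These are weighted averages of $x\cdot\nabla v$ against two different densities, and they do not determine $\alpha$. Without the value of $\alpha$, your moving-sphere step cannot produce a finite critical radius with equality. Also, the decay $u\sim|x|^{1-n}$ you invoke is itself only available when $\alpha>\frac{n}{p}$.

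The paper instead extracts $\alpha$ from the moving spheres. With $p=1$, $v-v_{x,\lambda}$ contains the term $(\alpha-n-1)\ln\frac{\lambda}{|\xi-x|}$ in $B(x,\lambda)$. If $\alpha<n+1$, the sphere about every center can be moved to $\lambda=\infty$; if $\alpha>n+1$, it can be shrunk to $\lambda=0$. In either case Lemma \ref{Calculus Lemma}(ii) makes $u$ constant, which \eqref{fm} and $(-\Delta)^{\frac12}u=e^{pv}>0$ forbid. Hence $\alpha=n+1$. Only then does the sphere stop at a finite radius with $u_{x,\bar\lambda}\equiv u$ and $v_{x,\bar\lambda}\equiv v$, so that Lemma \ref{Calculus Lemma}(i) applies.
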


\begin{rem}\label{rem0}
The finite total mass condition \eqref{fm} and $v(x) = o(|x|^2)$ at $\infty$ is necessary for the classification result of higher order conformally invariant equations or systems (see e.g. \cite{CY,Lin,WX,Yu,Z}). The assumption ``$u(x) = O\left(|x|^{K}\right)$ at $\infty$ for some $K\gg1$ arbitrarily large" is an extremely mild condition. In fact, the necessary condition for us to define $(-\Delta)^{\frac{1}{2}}u$ is $u \in\mathcal{L}_1(\mathbb{R}^n)$ (i.e., $\frac{u}{1+|x|^{n+1}}\in L^1(\mathbb{R}^n)$), which already indicates that $u$ grows slowly and must has strictly less than linear growth at $\infty$ in the sense of integral.
\end{rem}

We would like to mention some key ideas and main ingredients in our proof of Theorem \ref{theorem Main Result}.

One should note that the $3,4$-D system \eqref{system} has higher degree of nonlinearity and complexity than a single equation. We need to overcome the mutual restrictions between $(u,v)$ and derive the crucial asymptotic behaviors at $\infty$ and the integral representation formulae of $u$ and $v$.

First, from the finite total mass condition \eqref{fm}, we can derive the integral representation formula for $u$ (see Lemmas \ref{theorem Integral Representation for n = 3}, \ref{theorem Integral Representation for n = 4}), that is,
\begin{equation}\label{Integral Representation of u}
\begin{aligned}
u(x) &= \frac{1}{2\pi^2}\int_{\mathbb{R}^3}\frac{e^{pv(y)}}{|x-y|^2}\mathrm{d}y, & \;\text{if}\; n = 3, \\
u(x) &= \frac{1}{4\pi^2}\int_{\mathbb{R}^4}\frac{e^{pv(y)}}{|x-y|^3}\mathrm{d}y, & \;\text{if}\; n = 4,
\end{aligned}
\end{equation}
and hence $|x|^{-n+1}e^{pv} \in L^1(\mathbb{R}^n)$. Then, from the finite total mass condition \eqref{fm} and $v(x) = o(|x|^2)$ at $\infty$, we can derive the integral representation formula for $\Delta v$ (see Lemmas \ref{lemma Lowerbound of xi(x) for n = 3}, \ref{lemma Integral Representation of Delta v for n = 3}, \ref{lemma Precise Integral Representation of Delta v for n = 3} and Lemmas \ref{lemma Lowerbound of xi(x) for n = 4}, \ref{lemma Integral Representation of Delta v for n = 4}, \ref{lemma Precise Integral Representation of Delta v for n = 4}), that is,
\begin{equation}\label{e2+}
\begin{aligned}
\Delta{v}(x) &= -\frac{1}{2\pi^2}\int_{\mathbb{R}^3}{\frac{P_3(y)}{|x-y|^2}\mathrm{d}y}, & \;\text{if}\; n = 3, \\
\Delta{v}(x) &= -\frac{1}{4\pi^2}\int_{\mathbb{R}^4}{\frac{P_4(y)}{|x-y|^2}\mathrm{d}y}, & \;\text{if}\; n = 4,
\end{aligned}
\end{equation}
where we denote
$$
P_n(y) \triangleq \left(\frac{1}{|\cdot|^2}*u^2\right)(y)u^2(y) = \int_{\mathbb{R}^n}{\frac{u^2(x)u^2(y)}{|x-y|^2}\mathrm{d}x}.
$$
From the finite total mass condition \eqref{fm}, $v(x) = o(|x|^2)$ at $\infty$ and the assumption $u(x) = O\left(|x|^{K}\right)$ at $\infty$ for some $K\gg1$ arbitrarily large, by using the $\exp^{L}+L\ln{L}$ inequality from \cite{DQ2} to estimate integrals with logarithmic singularity, we get the asymptotic property
$$
\lim_{|x|\to+\infty}\frac{\zeta(x)}{\ln|x|} = -\alpha,
$$
where
$$
\begin{aligned}
\zeta(x) &:= \frac{1}{2\pi^2}\int_{\mathbb{R}^3}\ln\left[\frac{|y|}{|x-y|}\right]P_3(y)\mathrm{d}y, & \alpha &:= \frac{1}{2\pi^2}\int_{\mathbb{R}^3}P_3(x)\mathrm{d}x, & \;\text{if}\; n = 3, \\
\zeta(x) &:= \frac{1}{8\pi^2}\int_{\mathbb{R}^4}\ln\left[\frac{|y|}{|x-y|}\right]P_4(y)\mathrm{d}y, & \alpha &:= \frac{1}{8\pi^2}\int_{\mathbb{R}^4}P_4(x)\mathrm{d}x, & \;\text{if}\; n = 4.
\end{aligned}
$$
Based on these properties, by the Liouville type results in Corollary 2.10 in \cite{DQ2} derived from Lemma 3.3 in Lin \cite{Lin} (see Lemma \ref{lemma Classification of Harmonic Functions}), we can deduce from $|x|^{-n+1}e^{pv} \in L^1(\mathbb{R}^n)$ the following integral representation formula for $v$ :
\begin{equation}\label{Integral Representation of v}
\begin{aligned}
v(x) &= \frac{1}{2\pi^2}\int_{\mathbb{R}^3}{P_3(y)\ln{\left(\frac{|y|}{|x-y|}\right)}\mathrm{d}y} + \gamma_3, & \;\text{if}\; n = 3, \\
v(x) &= \frac{1}{8\pi^2}\int_{\mathbb{R}^4}{P_4(y)\ln{\left(\frac{|y|}{|x-y|}\right)}\mathrm{d}y} + \gamma_4, & \;\text{if}\; n = 4
\end{aligned}
\end{equation}
for some constant $\gamma_3,\gamma_4 \in \mathbb{R}$, and hence the crucial asymptotic behavior (see Lemmas \ref{lemma Integral Representation of v for n = 3}, \ref{lemma Integral Representation of v for n = 4})
$$
\lim_{|x|\to+\infty}\frac{v(x)}{\ln|x|} = -\alpha.
$$
As a consequence, combining with \eqref{Integral Representation of u}, we derive that
$$
\begin{aligned}
\alpha &:= \frac{1}{2\pi^2}\int_{\mathbb{R}^3}P_3(x)\mathrm{d}x\geqslant\frac{1}{p}, &  \;\text{if}\; n = 3, \\
\alpha &:= \frac{1}{8\pi^2}\int_{\mathbb{R}^4}P_4(x)\mathrm{d}x\geqslant\frac{1}{p}, &  \;\text{if}\; n = 4.
\end{aligned}
$$
Moreover, if $\alpha>\frac{n}{p}$, we can get the asymptotic behavior (see Lemmas \ref{lemma Asymptotic Property for n = 3}, \ref{lemma Asymptotic Property for n = 4})
$$
\lim_{|x|\to+\infty}|x|^{n-1}u(x) = C,
$$
where
$$
\begin{aligned}
C &:= \frac{1}{2\pi^2}\int_{\mathbb{R}^3}e^{pv(x)}\mathrm{d}x<+\infty, & \;\text{if}\; n = 3,  \\
C &:= \frac{1}{4\pi^2}\int_{\mathbb{R}^4}e^{pv(x)}\mathrm{d}x<+\infty, & \;\text{if}\; n = 4.
\end{aligned}
$$

Next, by making use of the above crucial asymptotic estimates and integral representation formulae, we can apply the method of moving spheres to the IE system for $(u,v)$ consisting of \eqref{Integral Representation of u} and \eqref{Integral Representation of v}. One should note that, if $(u, v)$ solve the system \eqref{system} with $n=3,4$ for any given $p \in (0,+\infty )$, then $\widetilde{u}:=p^{\frac{1}{4}}u$ and $\widetilde{v}:=pv+\frac{1}{4}\ln p$ solve \eqref{system} with $p = 1$. Consequently, for the sake of simplicity, we may assume that $p = 1$. For any $x \in \mathbb{R}^n$ , we first prove that, $u_{x,\lambda} \geqslant u$ and $v_{x,\lambda} \geqslant v$ in $B(x,\lambda) \backslash \{x\}$ for $\lambda \in (0,+\infty)$ sufficiently small if $\alpha < n+1$; $u_{x,\lambda} \leqslant u$ and $v_{x,\lambda} \leqslant v$ in $B(x,\lambda) \backslash \{x\}$ for $\lambda \in (0,+\infty)$ sufficiently large if $\alpha > n+1$ (see \ref{Kelvin Transformation} for definitions of the Kelvin transforms $u_{x,\lambda}$ and $v_{x,\lambda}$ ). Then, for any $\overline{x} \in \mathbb{R}^n$, we show the limiting radius $\overline{\lambda}(\overline{x}) = +\infty$ if $\alpha < n+1$ and $\overline{\lambda}(\overline{x}) = 0$ if $\alpha > n+1$ (see \ref{Definition of Supremum Limiting Radius} and \ref{Definition of Infimum Limiting Radius} for definitions of the limiting radius $\overline{\lambda}(\overline{x})$), and hence derive a contradiction from Lemma 11.2 in \cite{LZ1} (see Lemma \ref{Calculus Lemma}), the finite total mass condition and the system \eqref{system}. Finally, we must have $\alpha = n+1$ and hence $u_{x,\overline{\lambda}(x)} \equiv u$ and $v_{x,\overline{\lambda}(x)} \equiv v$ in $\mathbb{R}^n \backslash \{x\}$ for any $x \in \mathbb{R}^n$ and some $\overline{\lambda}(x) \in (0,+\infty)$ depending on $x$. As a consequence, Lemma 11.1 in \cite{LZ1} (see Lemma \ref{Calculus Lemma}) and the asymptotic properties of $(u,v)$ yield the desired classification results in Theorem \ref{theorem Main Result}.

The rest of our paper is organized as follows. In Section 2, we will give some results which are useful in our proof. In Section 3, we carry out the proof of our main result -- Theorem \ref{theorem Main Result}.

In what follows, unless otherwise stated, we will use $C$ to denote a general positive constant that may depend on $p$ and $\lambda$, and whose value may differ from line to line.

\section{Preliminaries}

In this section, we give some useful lemmas that are necessary for our proof.

\subsection{Basic Propeties of Fractional Laplacians}

We need the following Maximum Principle for fractional Laplacians to establish the integral representation for $u$.

\begin{lem}[Maximum Principle \cite{CLL,S}]\label{lemma Maximum Principle}
Let $\Omega$ be a bounded domain in $\mathbb{R}^n$, $n \geqslant 2$, and $0 < \alpha < 2$. Assume that $u \in \mathcal{L}_\alpha \cap C^{[\alpha],\{\alpha\}+\varepsilon}_{loc}$ with arbitrarily small $\varepsilon > 0$ and is l.s.c on $\overline{\Omega}$. If $(-\Delta)^{\frac{\alpha}{2}}{u} \geqslant 0$ in $\Omega$ and $u \geqslant 0$ in $\mathbb{R}^n\backslash\Omega$, then $u \geqslant 0$ in $\mathbb{R}^n$. Moreover, if $u = 0$ at some point in $\Omega$, then $u = 0$ a.e. in $\mathbb{R}^n$. These conclusions also hold for unbounded domain $\Omega$ if we assume further that
$$
\liminf_{|x| \to \infty}{u(x)} \geqslant 0.
$$
\end{lem}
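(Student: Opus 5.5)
The plan is the classical weak maximum principle argument by contradiction, using the pointwise definition \eqref{nonlocal defn} of $(-\Delta)^{\frac{\alpha}{2}}$. The statement is the standard one from \cite{CLL,S}, so the work lies in checking that the hypotheses make the argument rigorous.

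\textbf{Bounded $\Omega$.} Suppose $u$ is negative somewhere in $\Omega$. Since $u$ is lower semicontinuous on the compact set $\overline{\Omega}$ and $u \geqslant 0$ outside $\Omega$, $u$ attains a negative minimum over $\mathbb{R}^n$ at some $x_0\in\Omega$; if the minimum were attained only on $\partial\Omega$ it would be $\geqslant 0$. The regularity $u\in C^{[\alpha],\{\alpha\}+\varepsilon}_{loc}$ together with $u\in\mathcal{L}_\alpha$ makes the principal value integral at $x_0$ converge. In that integral $u(x_0)-u(y)\leqslant 0$ for every $y$, and $u(x_0)-u(y) \leqslant u(x_0)<0$ on $\mathbb{R}^n\setminus\Omega$. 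Because $\mathbb{R}^n\setminus\Omega$ has positive measure, this gives
\[
(-\Delta)^{\frac{\alpha}{2}}u(x_0) \leqslant C_{n,\alpha}\int_{\mathbb{R}^n\setminus\Omega}\frac{u(x_0)-u(y)}{|x_0-y|^{n+\alpha}}\,\mathrm{d}y<0,
\]
which contradicts $(-\Delta)^{\frac{\alpha}{2}}u\geqslant0$ in $\Omega$. Hence $u\geqslant 0$ in $\mathbb{R}^n$.

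\textbf{Strong statement.} If in addition $u(x_0)=0$ at some $x_0\in\Omega$, then $x_0$ is a global minimum, so every integrand $u(x_0)-u(y)=-u(y)$ is $\leqslant 0$. Then
\[
0\leqslant (-\Delta)^{\frac{\alpha}{2}}u(x_0)=-C_{n,\alpha}\int_{\mathbb{R}^n}\frac{u(y)}{|x_0-y|^{n+\alpha}}\,\mathrm{d}y\leqslant 0,
\]
and since $u\geqslant 0$ this forces $u=0$ a.e.

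\textbf{Unbounded $\Omega$.} The extra hypothesis $\liminf_{|x|\to\infty}u\geqslant0$ supplies the missing compactness. If $\inf u<0$, choose $R$ so large that $u>\frac12\inf u$ for $|x|>R$. Then the infimum is attained at some $x_0\in\overline{\Omega}\cap \overline{B_R}$ by lower semicontinuity, and in fact $x_0\in\Omega$ because $u\geqslant0$ off $\Omega$. The same integral estimate gives a contradiction, provided $\mathbb{R}^n\setminus\Omega$ has positive measure. If it does not, the strict inequality instead comes from the region $\{|y|>R\}$, where $u(x_0)-u(y)<\frac12 u(x_0)<0$.

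The only delicate point is regularity. One must make sure the principal value at the minimum point is a genuine convergent integral, which is where $C^{[\alpha],\{\alpha\}+\varepsilon}_{loc}$ is used near the singularity and $\mathcal{L}_\alpha$ at infinity. The rest is routine.
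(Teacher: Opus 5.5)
The paper gives no proof of this lemma and only cites \cite{CLL,S}; your argument is the standard one from those references (a global negative minimum must lie in $\Omega$, the integrand is then nonpositive, strict negativity comes from the exterior region, and the same computation at a zero gives the strong statement), and it is correct. Your handling of the unbounded case when $\mathbb{R}^n\setminus\Omega$ is null, via the far region $\{|y|>R\}$ where $u(x_0)-u(y)<\frac{1}{2}u(x_0)$, is a useful detail that the cited argument glosses over.
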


The following Liouville theorem for $\alpha$-harmonic function in $\mathbb{R}^n$ with $n \geqslant 2$ played a significant role in establishing the integral representation.

\begin{lem}[Liouville Theorem \cite{BKN}]\label{lemma Liouville Theorem}
Assume $n \geqslant 2$ and $0 < \alpha < 2$. Let $u$ be a strong solution of
$$
\left\{
\begin{aligned}
& (-\Delta)^{\frac{\alpha}{2}}{u}(x) = 0, & x \in \mathbb{R}^n, \\
& u(x) \geqslant 0, & x \in \mathbb{R}^n,
\end{aligned}
\right.
$$
then $u \equiv C \geqslant 0$.
\end{lem}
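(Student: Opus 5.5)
The plan is to use the explicit Poisson kernel of $(-\Delta)^{\frac{\alpha}{2}}$ on balls, together with the Maximum Principle (Lemma \ref{lemma Maximum Principle}), to write $u$ as a Poisson integral of its exterior values. Then I compare these integrals at two different points as the radius tends to infinity. Throughout, a strong solution means $u\in\mathcal{L}_\alpha(\mathbb{R}^n)\cap C^{[\alpha],\{\alpha\}+\varepsilon}_{loc}(\mathbb{R}^n)$, so that $(-\Delta)^{\frac{\alpha}{2}}u$ is defined pointwise and $u$ is continuous.

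\emph{Step 1 (Poisson representation).} For $R>0$ and $|x|<R$, set
$$
P_R(x,y)=c_{n,\alpha}\,\frac{(R^2-|x|^2)^{\frac{\alpha}{2}}}{(|y|^2-R^2)^{\frac{\alpha}{2}}\,|x-y|^n},\qquad |y|>R,
$$
and define $w_R(x)=\int_{|y|>R}P_R(x,y)u(y)\,\mathrm{d}y$ for $|x|<R$, with $w_R:=u$ for $|x|\geqslant R$. The integral converges because $u\in\mathcal{L}_\alpha$: for large $|y|$ the kernel behaves like $|y|^{-n-\alpha}$, and near the sphere the singularity $(|y|^2-R^2)^{-\alpha/2}$ is integrable since $\alpha<2$ and $u$ is locally bounded. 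By the classical theory (Landkof, Riesz), $w_R$ is smooth in $B_R$, satisfies $(-\Delta)^{\frac{\alpha}{2}}w_R=0$ there, and is continuous across $\partial B_R$ because $u$ is continuous. Apply Lemma \ref{lemma Maximum Principle} on $\Omega=B_R$ to both $u-w_R$ and $w_R-u$. Each vanishes outside $B_R$ and is $\alpha$-harmonic in $B_R$, so $u\equiv w_R$ in $B_R$.

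\emph{Step 2 (Harnack-type comparison).} Fix $x,z\in\mathbb{R}^n$ and let $R\gg|x|+|z|$. For every $|y|>R$,
$$
\frac{P_R(x,y)}{P_R(z,y)}=\left(\frac{R^2-|x|^2}{R^2-|z|^2}\right)^{\frac{\alpha}{2}}\left(\frac{|z-y|}{|x-y|}\right)^{n}\leqslant 1+\delta(R),
$$
with $\delta(R)\to0$ as $R\to\infty$, uniformly in $|y|>R$. This holds because $\frac{|z-y|}{|x-y|}\leqslant\frac{|y|+|z|}{|y|-|x|}\leqslant\frac{R+|z|}{R-|x|}$. Since $u\geqslant0$, Step 1 gives
$$
u(x)=\int_{|y|>R}P_R(x,y)u(y)\,\mathrm{d}y\leqslant(1+\delta(R))\,u(z).
$$
Letting $R\to\infty$ yields $u(x)\leqslant u(z)$. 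Swapping $x$ and $z$ gives equality, so $u\equiv C\geqslant0$.

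\emph{Main obstacle.} I expect the delicate point to be Step 1: justifying that $w_R$ is a pointwise (strong) solution in $B_R$ that is continuous up to $\partial B_R$, so that Lemma \ref{lemma Maximum Principle} applies. If continuity at the boundary is troublesome, I would first try to choose the radius $R$ so that $u$ is smooth near $\partial B_R$; this is automatic from local H\"older regularity. Alternatively, I would mollify $u$, using that convolution with a nonnegative mollifier preserves nonnegativity, $\alpha$-harmonicity and membership in $\mathcal{L}_\alpha$. I would then prove the statement for the smooth approximants and pass to the limit.
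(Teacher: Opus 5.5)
Your proof is correct, and it takes a different, self-contained route from the paper. The paper does not prove this lemma at all; it simply quotes \cite{BKN}. There, $\alpha$-harmonicity is understood probabilistically: it is the mean-value property with respect to the exit distribution of the symmetric $\alpha$-stable process from subdomains, i.e.\ essentially the Poisson representation on balls that you establish in Step 1. The Liouville property comes out of their gradient estimate $|\nabla u(x)|\leqslant n\,u(x)/\mathrm{dist}(x,\partial D)$ for nonnegative $\alpha$-harmonic $u$ on an open set $D$, which is proved by differentiating the same Riesz Poisson kernel, letting the domain exhaust $\mathbb{R}^n$.

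Your Step 2 replaces that differentiation by a Harnack-type kernel ratio bound,
$\frac{P_R(x,y)}{P_R(z,y)}\leqslant\bigl(\frac{R^2}{R^2-|z|^2}\bigr)^{\alpha/2}\bigl(\frac{R+|z|}{R-|x|}\bigr)^{n}$.
This is more elementary, since it needs no differentiation under the integral sign, and it is all that Liouville requires. The gradient estimate of \cite{BKN} is a sharper local statement valid on arbitrary open sets. Conversely, your Step 1 makes explicit the bridge that the citation leaves implicit: a pointwise (strong) solution in $\mathcal{L}_\alpha\cap C^{[\alpha],\{\alpha\}+\varepsilon}_{loc}$ coincides with its Poisson integral on every ball. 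That is exactly what is needed to apply a result about probabilistically $\alpha$-harmonic functions to the solutions considered in this paper.

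One technical remark on Step 1 concerns the regularity of $w_R$ at $\partial B_R$. In general $w_R$ is only (H\"older) continuous across $\partial B_R$, so $u-w_R$ need not lie in $C^{[\alpha],\{\alpha\}+\varepsilon}_{loc}(\mathbb{R}^n)$. You should therefore use Lemma \ref{lemma Maximum Principle} in the form where this regularity is required only inside $\Omega$. This is harmless, because the proof of the maximum principle only evaluates $(-\Delta)^{\frac{\alpha}{2}}$ at an interior negative minimum. Moreover, $u-w_R$ vanishes outside $B_R$ and is continuous on $\overline{B_R}$. It is therefore bounded, lies in $\mathcal{L}_\alpha$, and is lower semicontinuous on $\overline{B_R}$, as the lemma requires.
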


For domain $\Omega \subset \mathbb{R}^n$, consider the following Dirichlet problem
$$
\left\{
\begin{aligned}
(-\Delta)^\frac{\alpha}{2}{u}(x) &= f(x), & x \in \Omega, \\
u(x) &= 0, & x \notin \Omega.
\end{aligned}
\right.
$$
The corresponding Green's function $G_\alpha(x,y)$ should satisfies
$$
\left\{
\begin{aligned}
(-\Delta)^\frac{\alpha}{2}{G_\alpha}(x,y) &= \delta(x-y), & x \in \Omega, \\
G_\alpha(x,y) &= 0, & x \notin \Omega \;\text{or}\; y \notin \Omega.
\end{aligned}
\right.
$$

We have the following explicit expression for Green's function $G_{\alpha,R}$ for Dirichlet problem of $(-\Delta)^\frac{\alpha}{2}$ on the ball $B(0,R)$.
\begin{lem}[Green's function on the ball \cite{K}]\label{lemma Green Functions}
The Green function for Dirichlet problem of $(-\Delta)^\frac{\alpha}{2}$ on the ball $B(0,R)$ is
$$
G_{\alpha,R}(x,y) \triangleq \frac{C_{n,\alpha}}{|x-y|^{n-\alpha}}\int_{0}^{\frac{t_R}{d_R}}{\frac{z^{\frac{\alpha}{2}-1}}{(1+z)^\frac{n}{2}}\mathrm{d}z}, \qquad x,y \in B(0,R),
$$
with
$$
\left\{
\begin{aligned}
d_R & = \frac{|x-y|^2}{R^2}, \\
t_R & = \left(1-\frac{|x|^2}{R^2}\right)\left(1-\frac{|y|^2}{R^2}\right),
\end{aligned}
\right.
$$
and $C_{n,\alpha} \in \mathbb{R}$ is a constant. Particularly,
$$
\begin{aligned}
C_{n,\alpha} & = \frac{1}{2\pi^2}\left(\int_{0}^{+\infty}{\frac{1}{(1+z)^\frac{3}{2}\sqrt{z}}\mathrm{d}z}\right)^{-1}, & \text{if}\; n=3,\alpha=1, \\
C_{n,\alpha} & = \frac{1}{4\pi^2}\left(\int_{0}^{+\infty}{\frac{1}{(1+z)^2\sqrt{z}}\mathrm{d}z}\right)^{-1}, & \text{if}\; n=4,\alpha=1.
\end{aligned}
$$
\end{lem}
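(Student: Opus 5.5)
We take the structural part of the formula from Riesz's classical computation, as reproduced in \cite{K}, and verify only the normalization of $C_{n,\alpha}$ for $\alpha=1$, $n=3,4$, which is what the rest of the paper uses. The plan is to write $G_{\alpha,R}=\Phi_\alpha-H_R$, where $\Phi_\alpha$ is the fundamental solution of $(-\Delta)^{\frac{\alpha}{2}}$ in $\mathbb{R}^n$ and $H_R(\cdot,y)$ is a regular correction. The constant is then fixed by requiring that $G_{\alpha,R}(x,y)$ and $\Phi_\alpha(x-y)$ have the same singularity as $x\to y$.

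\textbf{Boundary condition.} For $|x|\geqslant R$ or $|y|\geqslant R$ we have $t_R\leqslant 0$, so the integral from $0$ to $t_R/d_R$ is interpreted as $0$ and $G_{\alpha,R}$ vanishes outside the ball.

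\textbf{Equation in the ball.} The identity $(-\Delta)^{\frac{\alpha}{2}}_xG_{\alpha,R}(\cdot,y)=\delta_y$ in $B(0,R)$ is the content of Riesz's theorem. One route is via the Kelvin transform $x\mapsto R^2x/|x|^2$ together with the Poisson kernel $P_R(x,z)=c\,\bigl(\frac{R^2-|x|^2}{|z|^2-R^2}\bigr)^{\alpha/2}|x-z|^{-n}$ for $|z|>R$. One checks that
$$
H_R(x,y):=\int_{|z|>R}P_R(x,z)\,\Phi_\alpha(z-y)\,\mathrm{d}z
$$
is $\alpha$-harmonic in $B(0,R)$ and agrees with $\Phi_\alpha(\cdot-y)$ outside. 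Computing this integral in closed form yields exactly the incomplete Beta expression. This computation is the main obstacle, and we quote it from \cite{K} rather than redo it.

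\textbf{Normalization.} As $x\to y$ inside the ball, $d_R\to 0$ while $t_R$ stays positive, so $t_R/d_R\to+\infty$. Hence
$$
G_{\alpha,R}(x,y)=\frac{C_{n,\alpha}}{|x-y|^{n-\alpha}}\left(\int_0^{+\infty}\frac{z^{\frac{\alpha}{2}-1}}{(1+z)^{\frac n2}}\,\mathrm{d}z+o(1)\right).
$$
Since $G_{\alpha,R}(\cdot,y)-\Phi_\alpha(\cdot-y)$ is $\alpha$-harmonic and bounded near $y$, the leading coefficient must equal that of $\Phi_\alpha$. For $\alpha=1$ the fundamental solution is
$$
\Phi_1(x)=\frac{\Gamma(\frac{n-1}{2})}{2\pi^{\frac{n+1}{2}}}\,|x|^{1-n},
$$
whose coefficient is $\frac{1}{2\pi^2}$ for $n=3$ and $\frac{\Gamma(3/2)}{2\pi^{5/2}}=\frac{1}{4\pi^2}$ for $n=4$. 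These are also the constants that appear in the representation formula \eqref{Integral Representation of u}. Dividing by the integrals gives precisely the stated values of $C_{n,\alpha}$.

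For later use, the integrals evaluate to
$$
B\!\left(\tfrac12,1\right)=2 \quad\text{and}\quad B\!\left(\tfrac12,\tfrac32\right)=\tfrac{\pi}{2},
$$
so $C_{3,1}=\frac{1}{4\pi^2}$ and $C_{4,1}=\frac{1}{2\pi^3}$.
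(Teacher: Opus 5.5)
Your proposal is correct, and it goes slightly beyond the paper, which gives no argument for this lemma and quotes the formula and both constants wholesale from \cite{K}.

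You also take the hard structural fact from \cite{K}: that the incomplete Beta expression equals $\Phi_1(\cdot-y)-H_R(\cdot,y)$, with $H_R(\cdot,y)$ the Poisson extension of $\Phi_1(\cdot-y)$ from $\mathbb{R}^n\setminus B(0,R)$. The difference is that you then recover $C_{n,1}$ independently, by matching the singularity at $x=y$ with the Riesz kernel. Your values $B(\tfrac12,1)=2$, $B(\tfrac12,\tfrac32)=\tfrac{\pi}{2}$, $C_{3,1}=\frac{1}{4\pi^2}$ and $C_{4,1}=\frac{1}{2\pi^3}$ are right. They agree with the general constant $\frac{\Gamma(n/2)}{2^{\alpha}\pi^{n/2}\Gamma(\alpha/2)^{2}}$ of \cite{K}, because that constant times $B(\frac{\alpha}{2},\frac{n-\alpha}{2})$ is exactly the Riesz constant $\frac{\Gamma(\frac{n-\alpha}{2})}{2^{\alpha}\pi^{n/2}\Gamma(\frac{\alpha}{2})}$.

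What your route buys over a bare citation is that it isolates the one fact the paper actually uses. For fixed $x\neq y$, $t_R/d_R$ increases to $+\infty$ as $R\to\infty$, so $G_{1,R}(x,y)$ increases to $C_{n,1}\bigl(\int_0^{+\infty}\frac{z^{-1/2}}{(1+z)^{n/2}}\,\mathrm{d}z\bigr)|x-y|^{1-n}=\Phi_1(x-y)$. This is what turns $u\geqslant u_R$ into the representation with constants $\frac{1}{2\pi^2}$ and $\frac{1}{4\pi^2}$ in Lemmas \ref{lemma Integral Representation of u for n = 3} and \ref{lemma Integral Representation of u for n = 4}. It also makes explicit that these values depend on normalizing $(-\Delta)^{\frac12}$ so that its symbol is $|\xi|$ when that of $-\Delta$ is $|\xi|^2$.

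One small slip is in your boundary paragraph. It is not true that $t_R\leqslant 0$ whenever $|x|\geqslant R$ or $|y|\geqslant R$: if both $|x|>R$ and $|y|>R$, then $t_R>0$. The vanishing of $G_{\alpha,R}$ off $B(0,R)\times B(0,R)$ is part of the definition, since the formula is asserted only for $x,y\in B(0,R)$ and $G_{\alpha,R}$ is extended by zero. What the formula itself contributes is continuity of this extension, because $t_R/d_R\to 0$ as $|x|\to R$ with $y\in B(0,R)$ fixed.
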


\subsection{Other Technical Lemmas}

In order to show the precise asymptotic estimates and hence the integral representation formulae for the solution pair $(u,v)$, we need the following useful $\exp^L+L\ln{L}$ inequality from \cite{DQ2}, which has its own independent interest and can be regarded as a limiting form of Young's inequality or H\"{o}lder's inequality.
\begin{lem}[$\exp^L+L\ln{L}$ Inequality \cite{DQ2}]\label{lemma expL+LlnL Inequality}
Suppose $n \geqslant 1$ and $\Omega \subseteq \mathbb{R}^n$ is a bounded or unbounded domain. Assume $f \in \exp^L(\Omega)$ and $g \in L\ln{L}(\Omega)$, then we have $fg \in L^1(\Omega)$ and
$$
\begin{aligned}
\int_{\Omega}{|f(x)g(x)|\mathrm{d}x} & \leqslant \int_{\Omega}{(e^{|f(x)|}-|f(x)|-1)\mathrm{d}x} + \int_{\Omega}{|g(x)|\ln(|g(x)|+1)\mathrm{d}x} \\
& \triangleq \lVert{f}\rVert_{\exp^L(\Omega)} + \lVert{g}\rVert_{L\ln{L}(\Omega)},
\end{aligned}
$$
where
$$
\begin{aligned}
\exp^L(\Omega)
& \triangleq
\left\{
f
\left|
\begin{aligned}
& f: \Omega \to \mathbb{C} \; \text{measurable}, \\
& \int_{\Omega}{(e^{|f(x)|}-|f(x)|-1)\mathrm{d}x} < +\infty
\end{aligned}
\right.
\right\},
\\
L\ln{L}(\Omega)
& \triangleq
\left\{
g
\left|
\begin{aligned}
& g: \Omega \to \mathbb{C} \; \text{measurable}, \\
& \int_{\Omega}{|g(x)|\ln(|g(x)|+1)\mathrm{d}x} < +\infty
\end{aligned}
\right.
\right\}.
\end{aligned}
$$
\end{lem}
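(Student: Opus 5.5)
The plan is to reduce the statement to a pointwise Young-type inequality for nonnegative reals and then integrate over $\Omega$. Set $a=|f(x)|\geqslant0$ and $b=|g(x)|\geqslant0$. It suffices to prove
$$
ab\leqslant (e^{a}-a-1)+b\ln(b+1)\qquad\text{for all } a,b\geqslant0 .
$$
Integrating this over $\Omega$ gives the stated bound. Since $fg$ is measurable, the finiteness of both right-hand integrals (the assumptions $f\in\exp^L(\Omega)$ and $g\in L\ln L(\Omega)$) then yields $fg\in L^1(\Omega)$.

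For the pointwise inequality I would use Legendre (Fenchel) duality. Let $\varphi(a)=e^{a}-a-1$ on $[0,+\infty)$. This function is convex, $C^1$, and satisfies $\varphi(0)=\varphi'(0)=0$. For fixed $b\geqslant0$, the function $a\mapsto ab-\varphi(a)$ is concave, and its critical point solves $b=\varphi'(a)=e^{a}-1$, that is, $a=\ln(b+1)\geqslant0$. Hence
$$
\sup_{a\geqslant0}\bigl(ab-\varphi(a)\bigr)=b\ln(b+1)-(b+1)+\ln(b+1)+1=(b+1)\ln(b+1)-b .
$$
This gives the sharp Young inequality $ab\leqslant\varphi(a)+(b+1)\ln(b+1)-b$.

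It remains to compare the conjugate with the $L\ln L$ integrand. We have
$$
(b+1)\ln(b+1)-b=b\ln(b+1)+\bigl(\ln(b+1)-b\bigr)\leqslant b\ln(b+1),
$$
because $\ln(1+b)\leqslant b$ for $b\geqslant0$. This proves the pointwise estimate, and integration completes the argument.

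There is no real obstacle here. The only point needing care is to identify the convex conjugate of $e^{a}-a-1$ correctly and to notice that the elementary bound $\ln(1+b)\leqslant b$ absorbs the extra terms. The argument is the same whether $\Omega$ is bounded or unbounded, and it is unaffected if $f$ and $g$ are complex-valued, since only their moduli enter.
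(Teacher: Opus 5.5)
Your proof is correct: the pointwise Young (Fenchel) inequality $ab\leqslant (e^{a}-a-1)+\bigl[(1+b)\ln(1+b)-b\bigr]$ for the complementary pair $e^{a}-a-1$ and $(1+b)\ln(1+b)-b$, combined with $\ln(1+b)\leqslant b$, gives exactly the claimed bound, and integrating it over $\Omega$ yields both $fg\in L^1(\Omega)$ and the estimate. The paper does not prove this lemma itself but quotes it from \cite{DQ2}; to my recollection it is obtained there by essentially this same pointwise Young-type argument, using $e^{t}-1$ and its inverse $\ln(1+s)$.
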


We also need the following lemma on Liouville type results on harmonic functions from \cite{Lin}.
\begin{lem}[\cite{Lin}]\label{lemma Classification of Harmonic Functions}
Assume $n \geqslant 2$. Suppose that $w$ is a harmonic function in $\mathbb{R}^n$. Then we have
\\
(i) If $w^+ = O(|x|^2)$ at $\infty$, then $w$ is a polynomial of degree at most 2.
\\
(ii) If $w^+ = o(|x|^2)$ at $\infty$, then $w$ is a polynomial of degree at most 1.
\\
(iii) If $w^+ = o(|x|)$ at $\infty$, then $w \equiv C$ in $R^N$ for some constant $C$.
\end{lem}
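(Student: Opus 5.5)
The plan is to convert the one-sided growth bound on $w^+$ into a two-sided bound on the averages of $|w|$, using the mean value property, and then to conclude with the standard interior derivative estimates for harmonic functions.

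\emph{Step 1 (from $w^+$ to $|w|$ in average).} Since $w$ is harmonic, the mean value property on balls gives
$$
\int_{B(x,R)} w\,\mathrm{d}y = |B(x,R)|\,w(x).
$$
Writing $|w| = 2w^+ - w$, we obtain for every $x\in\mathbb{R}^n$ and $R>0$
$$
\frac{1}{|B(x,R)|}\int_{B(x,R)}|w|\,\mathrm{d}y = \frac{2}{|B(x,R)|}\int_{B(x,R)}w^+\,\mathrm{d}y - w(x) \leqslant 2\sup_{B(0,R+|x|)} w^+ + |w(x)|.
$$
In case (i) we have $w^+\leqslant C(1+|y|^2)$. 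Hence, for fixed $x$, the average of $|w|$ over $B(x,R)$ is $O(R^2)$ as $R\to+\infty$. In case (ii) it is $o(R^2)$, and in case (iii) it is $o(R)$. Here the fixed term $|w(x)|$ is harmless, and $(R+|x|)^2 = R^2(1+o(1))$.

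\emph{Step 2 (derivative estimates).} For harmonic functions we have the classical interior estimate
$$
|D^k w(x)| \leqslant \frac{C_{n,k}}{R^k}\cdot\frac{1}{|B(x,R)|}\int_{B(x,R)}|w|\,\mathrm{d}y ,
$$
which follows by applying the mean value property iteratively to derivatives on nested balls. We combine it with Step 1 and let $R\to+\infty$ with $x$ fixed:
\begin{itemize}
\item in case (i), taking $k=3$ gives $|D^3w(x)|\leqslant C R^{-3}\cdot O(R^2)\to0$;
\item in case (ii), taking $k=2$ gives $|D^2 w(x)| \leqslant CR^{-2}\cdot o(R^2)\to 0$;
\item in case (iii), taking $k=1$ gives $|Dw(x)|\leqslant CR^{-1}\cdot o(R)\to0$.
\end{itemize}

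\emph{Step 3 (conclusion).} Since $x$ was arbitrary, in each case the relevant derivative vanishes identically on $\mathbb{R}^n$. By Taylor's formula, $w$ is therefore a polynomial of degree at most $2$ in case (i) and at most $1$ in case (ii), and $w\equiv C$ in case (iii).

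The only genuinely nontrivial point is Step 1: the hypothesis controls only $w^+$, and it is the mean value identity that converts this one-sided control into control of $|w|$ in $L^1$ average. Everything else is the routine Cauchy-type estimate from Step 2.
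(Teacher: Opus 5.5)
Your proof is correct. The identity $|w| = 2w^+ - w$, combined with the mean value property, turns the one-sided hypothesis on $w^+$ into the bound that the average of $|w|$ over $B(x,R)$ is $O(R^2)$, $o(R^2)$ or $o(R)$. The interior estimate $|D^k w(x)| \leqslant C_{n,k}R^{-k}|B(x,R)|^{-1}\int_{B(x,R)}|w|\,\mathrm{d}y$ then forces $D^3w$, $D^2w$ or $Dw$ to vanish identically. The paper does not prove this lemma; it quotes it from Lin, and your argument is essentially the standard proof of that result.
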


In order to apply the moving spheres method, we need the following Hardy-Littlewood-Sobolev inequality.
\begin{lem}[Hardy-Littlewood-Sobolev Inequality \cite{FL,Lieb}]\label{lemma Hardy-Littlewood-Sobolev Inequality}
Let $ n \geqslant 1 $, $ 0 < s < n $, and $ 1 < p < q < +\infty $ satisfy $ \frac{n}{p} = s + \frac{n}{q} $. Then there exists a constant $ C_{n,s,p,q} $ such that
$$
\left\lVert{\int_{\mathbb{R}^n}{\frac{f(y)}{|x-y|^{n-s}}\mathrm{d}y}}\right\rVert_{L^q(\mathbb{R}^n)} \leqslant C_{n,s,p,q}\lVert{f}\rVert_{L^p(\mathbb{R}^n)}.
$$
\end{lem}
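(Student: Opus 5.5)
We prove the Hardy-Littlewood-Sobolev inequality by Hedberg's pointwise argument: bound the Riesz potential pointwise by a power of the Hardy-Littlewood maximal function, then apply the $L^p$-boundedness of the maximal operator, which needs $p>1$. Write
$$
I_sf(x):=\int_{\mathbb{R}^n}\frac{f(y)}{|x-y|^{n-s}}\mathrm{d}y,\qquad Mf(x):=\sup_{r>0}\frac{1}{|B(x,r)|}\int_{B(x,r)}|f(y)|\mathrm{d}y .
$$
Since $|I_sf|\leqslant I_s|f|$, we may assume $f\geqslant0$. The relation $\frac{n}{p}=s+\frac{n}{q}$ with $q<+\infty$ gives $s<\frac{n}{p}$, and this is what makes the far-field estimate below converge.

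Fix $x$ and a radius $R>0$ to be chosen depending on $x$, and split $I_sf(x)$ into the integrals over $|x-y|<R$ and $|x-y|\geqslant R$.

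\emph{Near part.} Decompose $B(x,R)$ into dyadic annuli $2^{-k-1}R\leqslant|x-y|<2^{-k}R$ for $k\geqslant0$. On the $k$-th annulus the kernel is at most $(2^{-k-1}R)^{s-n}$, and the integral of $f$ over the annulus is at most $C(2^{-k}R)^nMf(x)$. Summing the geometric series, which converges because $s>0$, gives
$$
\int_{|x-y|<R}\frac{f(y)}{|x-y|^{n-s}}\mathrm{d}y\leqslant CR^{s}Mf(x).
$$

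\emph{Far part.} By H\"older's inequality with $p'=\frac{p}{p-1}$,
$$
\int_{|x-y|\geqslant R}\frac{f(y)}{|x-y|^{n-s}}\mathrm{d}y\leqslant\|f\|_{L^p}\left(\int_{|z|\geqslant R}|z|^{-(n-s)p'}\mathrm{d}z\right)^{\frac{1}{p'}}=CR^{s-\frac{n}{p}}\|f\|_{L^p}.
$$
The integral over $|z|\geqslant R$ is finite exactly when $(n-s)p'>n$, which is equivalent to $s<\frac{n}{p}$.

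\emph{Choice of $R$.} Take $R=\left(\|f\|_{L^p}/Mf(x)\right)^{\frac{p}{n}}$, so that the two bounds are equal; if $Mf(x)=0$ then $f\equiv0$ and there is nothing to prove. Using $1-\frac{sp}{n}=\frac{p}{q}$, this yields the pointwise estimate
$$
|I_sf(x)|\leqslant C\,\big(Mf(x)\big)^{\frac{p}{q}}\,\|f\|_{L^p}^{1-\frac{p}{q}} .
$$

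\emph{Conclusion.} Raising to the power $q$ and integrating gives
$$
\|I_sf\|_{L^q}^q\leqslant C\|f\|_{L^p}^{q-p}\|Mf\|_{L^p}^p .
$$
The Hardy-Littlewood maximal theorem, $\|Mf\|_{L^p}\leqslant C\|f\|_{L^p}$ for $1<p\leqslant\infty$, then gives $\|I_sf\|_{L^q}\leqslant C_{n,s,p,q}\|f\|_{L^p}$.

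The only substantive input is the maximal theorem, and it is the one place where $p>1$ is used; at $p=1$ only a weak-type bound survives. The remaining step needing care is the exponent bookkeeping in the choice of $R$, to confirm that the powers of $Mf$ and $\|f\|_{L^p}$ come out exactly as $\frac{p}{q}$ and $1-\frac{p}{q}$.
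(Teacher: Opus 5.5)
Your proof is correct: it is Hedberg's argument, and the exponent bookkeeping checks out. The relation $\frac{1}{p}=\frac{s}{n}+\frac{1}{q}$ gives $1-\frac{sp}{n}=\frac{p}{q}$. The far-field integral converges precisely because $q<+\infty$ forces $s<\frac{n}{p}$, and the dyadic near-field sum converges because $s>0$. You do not mention points where $Mf(x)=+\infty$, but they are harmless: the pointwise bound is trivial there, and they form a null set because $Mf\in L^p$.

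The paper gives no proof of this lemma at all; it imports the inequality as a black box from the literature it cites, where the emphasis (for instance in Frank--Lieb) is on the sharp form, obtained with heavier tools such as rearrangement and conformal invariance. Your route gives up the optimal constant in exchange for a short, self-contained argument. Its only nontrivial input is the Hardy--Littlewood maximal theorem, and it shows transparently why $p>1$ is needed, since at $p=1$ only a weak-type bound survives. Another classical non-sharp proof, the layer-cake argument of Lieb--Loss, avoids the maximal function by estimating the bilinear form directly. For this paper the loss of sharpness is irrelevant: in the moving-spheres step the lemma is only used to produce some constant ($K$, $K'$, $R$ there), never an optimal one.
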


Finally, in order to derive our result, we need the following calculus Lemma (see Lemmas 11.1 and 11.2 in \cite{LZ1}).

\begin{lem}[Calculus Lemma \cite{LZ1}]\label{Calculus Lemma}
Let $n \geqslant 1$, $\nu \in R$ and $u \in C^1(\mathbb{R}^n)$. For every $x \in \mathbb{R}^n$ and $\lambda \geqslant 0$, define
$$
u_{x,\lambda}(\xi) = \left(\frac{\lambda}{|\xi-x|}\right)^\nu u\left(x+\lambda^2\frac{\xi-x}{|\xi-x|^2}\right), \qquad \forall \xi \in \mathbb{R}^n\backslash\{x\}.
$$
Then, we have
\\
(i) If for every $x \in \mathbb{R}^n$, there exists a $0 < \lambda_x < +\infty$ such that
$$
u_{x,\lambda_x}(\xi) = u(\xi), \qquad \forall \xi \in \mathbb{R}^n\backslash\{x\}.
$$
then for some $C \in R$, $\mu > 0$ and $x_0 \in \mathbb{R}^n$,
$$
u(x) = C\left(\frac{\mu}{1+\mu^2|x-x_0|^2}\right)^\frac{\nu}{2}.
$$
\\
(ii) If for every $x \in \mathbb{R}^n$ and any $0 < \lambda < +\infty$ such that
$$
u_{x,\lambda}(\xi) \geqslant u(\xi), \qquad \forall \xi \in B(x,\lambda) \backslash \{x\},
$$
then $u \equiv C$ for some constant $C \in \mathbb{R}$.
\end{lem}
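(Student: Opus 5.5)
We prove the two parts separately. Part (ii) follows from a first-order comparison on the spheres $\partial B(x,\lambda)$. For part (i), we show that $u$ has constant sign and then that $w:=(u/A)^{-2/\nu}$ is a quadratic polynomial with leading term $|\xi|^2$. Throughout we use that the inversion $\xi\mapsto x+\lambda^2\frac{\xi-x}{|\xi-x|^2}$ is an involution and that $(u_{x,\lambda})_{x,\lambda}=u$.

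\emph{Part (ii).} Fix $x$ and $\lambda>0$. Applying the Kelvin transform to the inequality $u_{x,\lambda}\geqslant u$ in $B(x,\lambda)\backslash\{x\}$ gives $u\geqslant u_{x,\lambda}$ in $\mathbb{R}^n\backslash \overline{B(x,\lambda)}$, with equality on $\partial B(x,\lambda)$. Hence the outward radial derivative of $u_{x,\lambda}-u$ on $\partial B(x,\lambda)$ is $\leqslant 0$. A direct computation on $|\xi-x|=\lambda$ gives
$$\partial_r u_{x,\lambda}(\xi)=-\frac{\nu}{\lambda}u(\xi)-\partial_r u(\xi).$$
We therefore obtain
$$\frac{\xi-x}{|\xi-x|}\cdot\nabla u(\xi)\geqslant -\frac{\nu\, u(\xi)}{2|\xi-x|}\qquad\text{for all } x\neq \xi .$$
Now fix $\xi$ and a unit vector $e$, put $x=\xi-te$, and let $t\to+\infty$. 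This yields $e\cdot\nabla u(\xi)\geqslant 0$ for every unit vector $e$, so $\nabla u\equiv 0$ and $u$ is constant.

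\emph{Part (i), constant sign.} From $u=u_{x,\lambda_x}$ we get $|\xi|^{\nu}u(\xi)=\bigl(\tfrac{|\xi|}{|\xi-x|}\bigr)^{\nu}\lambda_x^{\nu}\,u\bigl(x+\lambda_x^2\tfrac{\xi-x}{|\xi-x|^2}\bigr)$. Letting $|\xi|\to\infty$, the left side therefore tends to $\lambda_x^\nu u(x)$. The left side is independent of $x$, so $\lambda_x^{\nu}u(x)=A$ for every $x$, for one constant $A$.
If $A=0$, then $u\equiv0$ and the claim holds with $C=0$.
Otherwise $u(x)=A\lambda_x^{-\nu}$ never vanishes and has the sign of $A$. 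Define $w:=(u/A)^{-2/\nu}=\lambda_x^2$ (evaluated at $x$). Then $w\in C^1$ and $w>0$, and the symmetry relation becomes
$$w(\xi)=\frac{|\xi-x|^2}{w(x)}\,w(\eta),\qquad \eta=x+w(x)\frac{\xi-x}{|\xi-x|^2}.$$

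\emph{Part (i), $w$ is quadratic.} As $|\xi|\to\infty$ we have $\eta\to x$ with $|\eta-x|=O(|\xi|^{-1})$. The $C^1$ Taylor expansion $w(\eta)=w(x)+\nabla w(x)\cdot(\eta-x)+o(|\eta-x|)$ then gives
$$w(\xi)=|\xi-x|^2+\nabla w(x)\cdot(\xi-x)+o(|\xi|)=|\xi|^2+(\nabla w(x)-2x)\cdot\xi+o(|\xi|).$$
The function $w(\xi)-|\xi|^2$ does not depend on $x$. Comparing the linear growth along each direction $\xi=se$, $s\to\infty$, shows that $\nabla w(x)-2x=b$ is a constant vector. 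Hence $w(x)=|x|^2+b\cdot x+c=|x-x_0|^2+d$ with $x_0=-b/2$. Positivity of $w$ forces $d>0$, and writing $d=\mu^{-2}$ gives $u=A\,w^{-\nu/2}=C\bigl(\frac{\mu}{1+\mu^2|x-x_0|^2}\bigr)^{\nu/2}$.

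The main obstacle is this last expansion step. One has to keep the $o(|\xi|)$ remainder uniform enough, using only $C^1$ regularity, to identify the linear coefficient; isolating it along rays as above handles that. Getting the constant-sign reduction through the common limit $A$ before passing to $w$ is the other point that needs care.
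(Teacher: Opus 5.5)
Your proof is correct and follows the standard Li--Zhang argument (Lemmas 11.1 and 11.2 of \cite{LZ1}), which the paper cites rather than reproving: for (ii), differentiate the comparison radially on $\partial B(x,\lambda)$ and send the center to infinity along a ray; for (i), use the common limit $\lim_{|\xi|\to\infty}|\xi|^{\nu}u(\xi)=\lambda_x^{\nu}u(x)$ and then the first-order term at infinity to show $u^{-2/\nu}$ is a quadratic with leading part $|x|^2$. None of your computations use the sign of $\nu$, so this also justifies the extension to all real $\nu$ asserted in Remark~\ref{rem11}, with one small caveat: for $\nu=0$ you should stop at $u(x)=A\lambda_x^{0}=A$, since $w=(u/A)^{-2/\nu}$ is then undefined.
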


\begin{rem}\label{rem11}
In Lemma 11.1 and Lemma 11.2 of \cite{LZ1}, Li and Zhang have proved Lemma \ref{Calculus Lemma} for $\nu>0$. Nevertheless, their methods can also be applied to show Lemma \ref{Calculus Lemma} in the cases $\nu\leqslant0$, see \cite{Li,LZ,Xu}.
\end{rem}

\section{Proof of Theorem \ref{theorem Main Result}}

\subsection{Integral Representations and Crucial Asymptotic Property}

From now on, we set
$$
P_n(y) \triangleq \left(\frac{1}{|\cdot|^2}*u^2\right)(y)u^2(y) = \int_{\mathbb{R}^n}{\frac{u^2(x)u^2(y)}{|x-y|^2}\mathrm{d}x}.
$$

We will prove the following integral representation formulae for solutions.
\begin{thm}[Integral Representation for $n = 3$]\label{theorem Integral Representation for n = 3}
Assume $n=3$ and $p>0$. Let $(u,v)$ be a pair of classical solutions to the system \eqref{system} such that $u \geqslant 0$, $v(x)=o(|x|^2)$ as $|x|\rightarrow+\infty$ and $u$ satisfies the finite total mass condition \eqref{fm}. Suppose $u(x) = O(|x|^K)$ as $|x|\rightarrow+\infty$ for some $K \gg 1$ arbitrarily large. Then we have the integral representation
$$
\left\{
\begin{aligned}
& u(x) = \frac{1}{2\pi^2}\int_{\mathbb{R}^3}{\frac{e^{pv(y)}}{|x-y|^2}\mathrm{d}y}, \\
& v(x) = \frac{1}{2\pi^2}\int_{\mathbb{R}^3}{P_3(y)\ln{\left(\frac{|y|}{|x-y|}\right)}\mathrm{d}y} + \gamma_3,
\end{aligned}
\right.
$$
where $\gamma_3 \in \mathbb{R}$ is a constant.
\end{thm}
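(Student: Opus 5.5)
The proof has two halves: first the representation of $u$, then the representation of $v$.

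\textbf{Representation of $u$.} For $R>0$ define
$$
u_R(x):=\int_{B(0,R)}G_{1,R}(x,y)e^{pv(y)}\,\mathrm{d}y
$$
using the Green function of Lemma \ref{lemma Green Functions}. Then $w_R:=u-u_R$ satisfies $(-\Delta)^{\frac12}w_R=0$ in $B(0,R)$ and $w_R=u\geq0$ outside $B(0,R)$. Lemma \ref{lemma Maximum Principle} gives $w_R\geq0$, that is, $u\geq u_R$.

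Let $R\to\infty$. By monotone convergence, and since the normalizing constant makes $G_{1,R}(x,y)\to \frac{1}{2\pi^2|x-y|^2}$, we get
$$
u(x)\geq \frac{1}{2\pi^2}\int_{\mathbb{R}^3}\frac{e^{pv(y)}}{|x-y|^2}\,\mathrm{d}y=:\tilde u(x).
$$
In particular $\tilde u<\infty$, so $|y|^{-2}e^{pv}\in L^1$.

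Next set $w:=u-\tilde u\geq 0$. It satisfies $(-\Delta)^{\frac12}w=0$ in $\mathbb{R}^3$, because $\tilde u$ is the Riesz potential of $e^{pv}$ and lies in $\mathcal{L}_1$. Lemma \ref{lemma Liouville Theorem} then gives $w\equiv C\geq0$. If $C>0$, then $u\geq C$, so $P_3(y)\geq C^2u^2\ast|\cdot|^{-2}\geq C^4\int_{\mathbb{R}^3}|x-y|^{-2}\,\mathrm{d}x=\infty$, which contradicts \eqref{fm}. Hence $C=0$.

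\textbf{Representation of $v$.} I proceed in three stages.

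\emph{Step 1: representation of $\Delta v$.} Set $h:=-\Delta v\geq$? Write $(-\Delta)^{\frac12}(-\Delta v)=P_3$. I compare $-\Delta v$ with
$$
\frac{1}{2\pi^2}\int_{\mathbb{R}^3}\frac{P_3(y)}{|x-y|^2}\,\mathrm{d}y,
$$
using the same ball-Green-function argument. This needs a lower bound on $-\Delta v$ at infinity (a $\liminf\geq0$ type condition, or a spherical-average argument) in order to apply the maximum principle. To get it, I take spherical averages: the average $\overline{\Delta v}$ combined with $v=o(|x|^2)$ forces $\liminf$ of suitable averages of $-\Delta v$ to be nonnegative. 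The difference $-\Delta v-\frac{1}{2\pi^2}\int P_3|x-y|^{-2}$ is then $\frac12$-harmonic and bounded below, hence constant by Lemma \ref{lemma Liouville Theorem}. The constant must vanish, since otherwise $v\sim -c|x|^2$, contradicting $v=o(|x|^2)$ (at least in the averaged sense).

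\emph{Step 2: the log potential.} Define
$$
\zeta(x):=\frac{1}{2\pi^2}\int_{\mathbb{R}^3}\ln\frac{|y|}{|x-y|}\,P_3(y)\,\mathrm{d}y .
$$
Using $\Delta\ln\frac1{|x|}=-\frac{1}{|x|^2}$ in $\mathbb{R}^3$, one checks $\Delta\zeta=\Delta v$. Well-definedness needs $P_3\ln(2+|y|)\in L^1$. I expect to get this from $u=O(|x|^K)$ together with local boundedness of $u$: these give $P_3(y)\leq C(1+|y|)^{N}$ polynomially, and the polynomial tail is controlled because $\int P_3<\infty$. Near the singularity $y=x$ the logarithm is integrable against the locally bounded $P_3$. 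I then prove $\zeta(x)/\ln|x|\to-\alpha$ by splitting the integral into the regions $|y|\le |x|/2$, $|x-y|\le |x|/2$, and the rest. The dangerous near-diagonal region $\{|x-y|<1\}$ is controlled via Lemma \ref{lemma expL+LlnL Inequality}, pairing $\ln\frac1{|x-y|}$ (which lies in $\exp^L$ locally after scaling by a small constant) with $P_3$ in $L\ln L$; for the latter I use the polynomial bound on $u$, which gives $\ln(P_3+1)\lesssim\ln|x|$ there.

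\emph{Step 3: Liouville.} The function $v-\zeta$ is harmonic and $o(|x|^2)$, so Lemma \ref{lemma Classification of Harmonic Functions}(ii) makes it affine, $v=\zeta+a\cdot x+\gamma_3$. If $a\neq0$, then on a cone around the direction $a$ we have $v\geq c|x|$ (because $\zeta=O(\ln|x|)$). Then $e^{pv}$ grows exponentially on that cone, contradicting $|y|^{-2}e^{pv}\in L^1$. Hence $a=0$.

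\textbf{Main obstacle.} I expect the hardest parts to be Step 1, namely justifying the integral representation of $\Delta v$ (specifically the lower bound at infinity needed for the maximum principle), and the $\exp^L+L\ln L$ control in Step 2, which is exactly where the hypothesis $u=O(|x|^K)$ is consumed.
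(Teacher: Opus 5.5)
Your representation of $u$ and your Steps 2--3 for $v$ follow the paper. This covers the Green function on balls with Lemma~\ref{lemma Maximum Principle}, then Lemma~\ref{lemma Liouville Theorem} and \eqref{fm} to kill the constant. It also covers the $\exp^L+L\ln L$ estimate near the diagonal for $\zeta(x)/\ln|x|\to-\alpha$, and Lemma~\ref{lemma Classification of Harmonic Functions}(ii) with $|y|^{-2}e^{pv}\in L^1(\mathbb{R}^3)$ to kill the linear part.

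The genuine gap is Step 1. Both tools you want to use there need a \emph{pointwise} sign condition on $-\Delta v$ that you never establish. Comparing $-\Delta v$ with $\int_{B(0,R)}G_{1,R}(\cdot,y)P_3(y)\,\mathrm{d}y$ via Lemma~\ref{lemma Maximum Principle} requires $-\Delta v\geq0$ on $\mathbb{R}^3\setminus B(0,R)$. Lemma~\ref{lemma Liouville Theorem} requires the $\frac12$-harmonic difference to be bounded below. Spherical averages of $v$ combined with $v=o(|x|^2)$ only control averages of $\Delta v$, so they cannot supply either condition. Your Step 1 also never uses the first equation of the system, and that is exactly where the missing information comes from.

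The paper sets $m:=v-\zeta$, so that $(-\Delta)^{\frac32}m=0$ and $\Delta m$ is harmonic. It then gets the sign of $\Delta m$ from the exponential coupling. The identity $\fint_{\partial B(x_0,r)}m\,\mathrm{d}\sigma-m(x_0)=\frac{r^2}{6}\Delta m(x_0)$ and Jensen's inequality give $e^{\frac{pr^2}{6}\Delta m(x_0)}\leq e^{-pm(x_0)}\fint_{\partial B(x_0,r)}e^{pm}\,\mathrm{d}\sigma$. Combine this with the elementary one-sided bound $-\zeta(x)\leq\alpha\ln|x|+C$ (Lemma~\ref{lemma Lowerbound of xi(x) for n = 3}) and with $|y|^{-2}e^{pv}\in L^1(\mathbb{R}^3)$, which you already have from the representation of $u$. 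This yields $r^{-p\alpha}e^{\frac{pr^2}{6}\Delta m(x_0)}\in L^1_r([1,+\infty))$. Hence $\Delta m\leq0$ everywhere, so by Liouville for harmonic functions bounded above, $\Delta m\equiv-C$ with $C\geq0$. Then $v=o(|x|^2)$ forces $C=0$, and your averaging argument suffices for this last part.

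Alternatively, you could rescue your own route with a sign-free Liouville theorem for $\frac12$-harmonic functions in $\mathcal{L}_1(\mathbb{R}^3)$. That result is not among the available lemmas, and it would need $\Delta v\in\mathcal{L}_1(\mathbb{R}^3)$, which is only one of the two alternatives in the standing hypotheses.

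A smaller point: in Step 2 you do not need $P_3\ln(2+|y|)\in L^1$. Your justification for it is also invalid, since a polynomial bound plus $P_3\in L^1$ does not give log-weighted integrability. For fixed $x$, $\ln\frac{|y|}{|x-y|}$ stays bounded as $|y|\to\infty$, so $\zeta$ is well defined from $P_3\in L^1\cap L^\infty_{loc}$ alone. The polynomial bound on $u$ is needed only for $\ln(P_3+1)\lesssim\ln|x|$ on $B(x,1)$.
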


\begin{thm}[Integral Representation for $n = 4$]\label{theorem Integral Representation for n = 4}
Assume $n=4$ and $p>0$. Let $(u,v)$ be a pair of classical solutions to the system \eqref{system} such that $u \geqslant 0$, $v(x)=o(|x|^2)$ as $|x|\rightarrow+\infty$ and $u$ satisfies the finite total mass condition \eqref{fm}. Suppose $u(x) = O(|x|^K)$ as $|x|\rightarrow+\infty$ for some $K \gg 1$ arbitrarily large. Then we have the integral representation
$$
\left\{
\begin{aligned}
& u(x) = \frac{1}{4\pi^2}\int_{\mathbb{R}^4}{\frac{e^{pv(y)}}{|x-y|^3}\mathrm{d}y}, \\
& v(x) = \frac{1}{8\pi^2}\int_{\mathbb{R}^4}{P_4(y)\ln{\left(\frac{|y|}{|x-y|}\right)}\mathrm{d}y} + \gamma_4,
\end{aligned}
\right.
$$
where $\gamma_4 \in \mathbb{R}$ is a constant.
\end{thm}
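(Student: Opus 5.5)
We follow the scheme used for the case $n=3$ (Theorem \ref{theorem Integral Representation for n = 3}), adapted to the biharmonic operator $\Delta^2$ in $\mathbb{R}^4$. The proof falls into three parts: the representation of $u$, the representation of $\Delta v$, and finally the representation of $v$.

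\textbf{Representation of $u$.} For $R>0$ let
$$
u_R(x)=\int_{B(0,R)}G_{1,R}(x,y)e^{pv(y)}\,\mathrm{d}y,
$$
with $G_{1,R}$ the Green function of Lemma \ref{lemma Green Functions} for $n=4$, $\alpha=1$. Since $u\geq 0$, we have $(-\Delta)^{\frac12}(u-u_R)=0$ in $B(0,R)$ and $u-u_R\geq0$ outside $B(0,R)$. Lemma \ref{lemma Maximum Principle} then gives $u\geq u_R$. As $R\to\infty$, the normalization of $C_{4,1}$ makes $G_{1,R}(x,y)$ increase to $\frac{1}{4\pi^2}|x-y|^{-3}$. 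By monotone convergence,
$$
u(x)\geq w(x):=\frac{1}{4\pi^2}\int_{\mathbb{R}^4}\frac{e^{pv(y)}}{|x-y|^3}\,\mathrm{d}y ,
$$
so $w$ is finite and $|x|^{-3}e^{pv}\in L^1$. The function $u-w\geq0$ satisfies $(-\Delta)^{\frac12}(u-w)=0$, so by Lemma \ref{lemma Liouville Theorem} it equals a constant $C\ge0$. To show $C=0$, note that $u\geq C>0$ would give $P_4(y)\geq C^4\int|x-y|^{-2}\,\mathrm{d}x=+\infty$, contradicting \eqref{fm}.

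\textbf{Representation of $\Delta v$.} Set
$$
\xi(x):=-\frac{1}{4\pi^2}\int_{\mathbb{R}^4}\frac{P_4(y)}{|x-y|^2}\,\mathrm{d}y .
$$
Because $P_4\in L^1$ by \eqref{fm}, $\xi$ is well defined a.e. and $-\Delta \xi=\,$? Here $-\Delta(\frac{1}{4\pi^2}|x|^{-2})=\delta$ in $\mathbb{R}^4$, so $\Delta\xi=P_4=\Delta^2 v$, and therefore $h:=\Delta v-\xi$ is harmonic. We need a lower bound of the type $\xi(x)\gtrsim -|x|^{-2}$ away from a small set, as in the lemma announced for the lower bound of $\xi$. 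We then use $v=o(|x|^2)$ through spherical averages: $\bar v(r)=o(r^2)$, while $\overline{\Delta v}$ determines $\bar v$ by integrating twice. This forces the harmonic function $h$ (whose spherical means equal $h(0)$) to satisfy $h\equiv$ const $=0$, possibly after first using Lemma \ref{lemma Classification of Harmonic Functions} on $h^+$ with averaged bounds to show $h$ is constant. Hence $\Delta v=\xi$.

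\textbf{Representation of $v$.} Define
$$
\zeta(x)=\frac{1}{8\pi^2}\int_{\mathbb{R}^4}\ln\frac{|y|}{|x-y|}\,P_4(y)\,\mathrm{d}y .
$$
Then $\Delta\zeta=\xi$, using $\Delta\ln\frac1{|x|}=-2|x|^{-2}$ in $\mathbb{R}^4$. So $v-\zeta$ is harmonic. The \textbf{main obstacle} is the asymptotic estimate
$$
\lim_{|x|\to\infty}\frac{\zeta(x)}{\ln|x|}=-\alpha,
$$
which also yields $\zeta=o(|x|^2)$. I split the integral into three regions: $|y|\le |x|/2$, where the integrand is essentially $-\ln|x|$ plus a bounded term, giving the main term; $|x-y|\geq |x|/2,\ |y|>|x|/2$, which is $o(\ln|x|)$ by the tail of $P_4\in L^1$; and $|x-y|<|x|/2$, the logarithmic singularity. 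For the last region we apply Lemma \ref{lemma expL+LlnL Inequality} with $f=\ln\frac{1}{|x-y|}$ (scaled so it lies in $\exp^L$ on a ball), and need $P_4\in L\ln L$ there. That requires a pointwise polynomial bound on $P_4$, obtained from $u=O(|x|^K)$ and the local integrability of $u^2*|\cdot|^{-2}$. Granting this, $v-\zeta$ is harmonic with positive part $o(|x|^2)$, so Lemma \ref{lemma Classification of Harmonic Functions}(ii) gives $v-\zeta=a\cdot x+\gamma_4$. If $a\neq0$, then $e^{pv}\geq e^{pa\cdot x/2}$ on a cone (using $\zeta\ge -C\ln|x|$), which contradicts $|x|^{-3}e^{pv}\in L^1$. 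Hence $a=0$, and the stated representation of $v$ follows.
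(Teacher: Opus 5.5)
Your proposal is correct. For the representation of $u$, and for the last step on $v$ (asymptotics of $\zeta$ via Lemma \ref{lemma expL+LlnL Inequality}, then Lemma \ref{lemma Classification of Harmonic Functions}(ii), then killing the linear part with $|x|^{-3}e^{pv}\in L^1$), you follow the paper. The genuine difference is how you get $\Delta v=\xi$.

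\textbf{The paper's route.} It sets $m=v-\zeta$ and uses the biharmonic mean value identity $\frac{r^2}{8}\Delta m(x_0)=(\text{mean of } m \text{ on } \partial B(x_0,r))-m(x_0)$. Combining this with Jensen's inequality, the bound $-\zeta\le\alpha\ln|x|+C$ and $|x|^{-3}e^{pv}\in L^1$, it gets $\Delta m\le0$, hence $\Delta m\equiv-C$ by Liouville. Only then does it use $v=o(|x|^2)$, in a barrier/maximum principle argument forcing $C=0$. This buys $\Delta v=\xi-C$, $C\ge0$, with no growth hypothesis on $v$.

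\textbf{Your route.} You work directly with spherical means of $v$. This is shorter and uses only $P_4\in L^1$ and the two-sided bound $v=o(|x|^2)$.

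\textbf{Two places to tighten.} First, no pointwise lower bound on $\xi$ is needed, and none is available without decay information on $P_4$. By Newton's theorem in $\mathbb{R}^4$, the mean of $|z-y|^{-2}$ over $\partial B(x_0,t)$ equals $\max\{t,|y-x_0|\}^{-2}$. Hence the spherical mean satisfies $|\overline{\xi}_{x_0}(t)|\le C\min\{1,t^{-2}\}$. Integrating $r^{-3}\bigl(r^3\overline{v}_{x_0}'\bigr)'=\overline{\xi}_{x_0}+h(x_0)$ twice gives $\overline{v}_{x_0}(r)=v(x_0)+\frac{h(x_0)}{8}r^2+O(1+\ln r)$. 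Then $\overline{v}_{x_0}(r)=o(r^2)$ forces $h(x_0)=0$. Second, since $x_0$ is arbitrary, $h\equiv0$ follows directly, with no appeal to Lemma \ref{lemma Classification of Harmonic Functions}.

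The step you call the \emph{main obstacle} is not actually needed for this statement; the paper proves it because it is used later for $v/\ln|x|\to-\alpha$. From $|x-y|\le|x|+|y|$ you get the elementary bound $\zeta(x)\ge-\alpha\ln(1+|x|)-C$ for all $x$. This already gives $(v-\zeta)^+=o(|x|^2)$ for Lemma \ref{lemma Classification of Harmonic Functions}(ii). It also gives $e^{p\zeta}\ge c(1+|x|)^{-p\alpha}$, which is all you need to rule out $a\neq0$. So the representation holds without $u=O(|x|^K)$ and without the $\exp^L+L\ln L$ inequality.

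If you keep the asymptotics anyway, a polynomial bound on $P_4$ also requires bounding the factor $\int u^2(z)|y-z|^{-2}\,\mathrm{d}z$. One way is to split at $|z-y|=1$. For $|z-y|\ge1$, use $|y-z|^{-2}\le(1+|y-y_0|)^2|y_0-z|^{-2}$ at a point $y_0$ where this integral is finite.
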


\subsubsection{Proof of Theorem \ref{theorem Integral Representation for n = 3}}

In this subsection, we will prove Theorem \ref{theorem Integral Representation for n = 3}.

\begin{lem}[Integral Representation of $u$]\label{lemma Integral Representation of u for n = 3}
Assume $n = 3$. Let $(u,v)$ be a pair of classical solutions to the system \eqref{system} such that $u \geqslant 0$ and satisfies \eqref{fm}. Then we have the integral representation as
$$
u(x) = \frac{1}{2\pi^2}\int_{\mathbb{R}^3}{\frac{e^{pv(y)}}{|x-y|^2}\mathrm{d}y}.
$$
In addition, $u>0$ in $\mathbb{R}^3$, $u(x) \geqslant \frac{c}{|x|^2}$ for some constant $c>0$ and $|x|$ large enough, and
$$
\int_{\mathbb{R}^3}{\frac{e^{pv(y)}}{|y|^2}\mathrm{d}y} < +\infty.
$$
\end{lem}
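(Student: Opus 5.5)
We follow the standard route for fractional equations: localize the equation on balls with the Dirichlet Green's function of Lemma \ref{lemma Green Functions}, compare with $u$ via the maximum principle, and pass to $R\to\infty$. For $R>0$ we set
$$
u_R(x):=\int_{B(0,R)}G_{1,R}(x,y)e^{pv(y)}\mathrm{d}y,\qquad x\in B(0,R),
$$
and $u_R:=0$ outside $B(0,R)$. Since $v$ is continuous, $e^{pv}$ is bounded on $B(0,R)$, so $u_R$ is well defined. It solves $(-\Delta)^{\frac12}u_R=e^{pv}$ in $B(0,R)$ with zero exterior data, and it has enough regularity for Lemma \ref{lemma Maximum Principle} to apply. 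The difference $w_R:=u-u_R$ then satisfies $(-\Delta)^{\frac12}w_R=0$ in $B(0,R)$ and $w_R=u\ge 0$ outside $B(0,R)$. Lemma \ref{lemma Maximum Principle} gives $w_R\ge0$, that is, $u\ge u_R$ in $\mathbb{R}^3$.

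Next we let $R\to\infty$. For fixed $x$, $t_R/d_R\to+\infty$, so by monotone convergence $G_{1,R}(x,y)\uparrow \frac{1}{2\pi^2|x-y|^2}$. The constant $C_{3,1}$ in Lemma \ref{lemma Green Functions} is chosen exactly so that this limit holds. A second application of monotone convergence yields
$$
u(x)\ \ge\ \widetilde u(x):=\frac{1}{2\pi^2}\int_{\mathbb{R}^3}\frac{e^{pv(y)}}{|x-y|^2}\mathrm{d}y .
$$
Taking $x=0$ gives $\int_{\mathbb{R}^3}|y|^{-2}e^{pv}\,\mathrm{d}y<+\infty$, and consequently $\widetilde u$ is finite everywhere. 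Indeed, on $|y|\ge 2|x|$ the integrand is comparable to $|y|^{-2}e^{pv}$, while near $x$ the function $e^{pv}$ is locally bounded and $|x-y|^{-2}$ is locally integrable.

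The function $\widetilde u$ is the Riesz potential of $e^{pv}$, so $(-\Delta)^{\frac12}\widetilde u=e^{pv}$ and $\widetilde u\in\mathcal{L}_1$. Hence $h:=u-\widetilde u\ge0$ is $\frac12$-harmonic on all of $\mathbb{R}^3$. The Liouville theorem, Lemma \ref{lemma Liouville Theorem}, then gives $h\equiv C_0\ge0$.

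The main obstacle is proving $C_0=0$, and this is where the finite total mass condition \eqref{fm} must be used. If $C_0>0$, then $u\ge C_0$ everywhere, and
$$
\iint_{\mathbb{R}^3\times\mathbb{R}^3}\frac{u^2(x)u^2(y)}{|x-y|^2}\mathrm{d}x\mathrm{d}y\ \ge\ C_0^4\iint_{\mathbb{R}^3\times\mathbb{R}^3}\frac{\mathrm{d}x\,\mathrm{d}y}{|x-y|^2}=+\infty,
$$
which contradicts \eqref{fm}. So $C_0=0$ and the representation formula holds.

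The remaining claims follow from the formula. The integrand $e^{pv}$ is strictly positive, so $u>0$ everywhere. For the lower bound, fix the unit ball $B_1$. For $|x|\ge2$ and $y\in B_1$ we have $|x-y|\le 2|x|$, so
$$
u(x)\ \ge\ \frac{1}{2\pi^2}\,\frac{1}{4|x|^2}\int_{B_1}e^{pv}\,\mathrm{d}y=\frac{c}{|x|^2},
$$
with $c>0$.

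The only technical points to check are the regularity of $u_R$ needed for the maximum principle and the identity $(-\Delta)^{\frac12}\widetilde u=e^{pv}$ in the pointwise sense. Both are standard because $v\in C^{3,\varepsilon}_{loc}$, which makes $e^{pv}$ locally Hölder continuous.
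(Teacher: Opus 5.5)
Your proposal is correct and follows essentially the same route as the paper: the Dirichlet Green's function on $B(0,R)$, the maximum principle to get $u\geq u_R$, the limit $R\to\infty$, the Liouville theorem for the nonnegative $\frac12$-harmonic difference $u-\widetilde u$, and the finite total mass condition \eqref{fm} to force the constant to vanish. The differences are cosmetic. For the lower bound $u(x)\geq c|x|^{-2}$ you integrate over the unit ball rather than the annulus $\{1\leq|y|<|x|/2\}$. You also justify explicitly two steps the paper leaves implicit: the limit passage (monotonicity of $G_{1,R}$ in $R$) and the finiteness and $\mathcal{L}_1$ membership of $\widetilde u$ (via $0\leq\widetilde u\leq u$).
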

\begin{proof}
For arbitrary $R>0$, let
$$
u_R(x) \triangleq \int_{B(0,R)}{G_R(x,y)e^{pv(y)}\mathrm{d}y},
$$
where Green's function for $(-\Delta)^\frac{1}{2}$ on $B(0,R) \subset \mathbb{R}^3$ is given by
$$
G_R(x,y) \triangleq \frac{C}{|x-y|^2}\int_{0}^{\frac{t_R}{d_R}}{\frac{1}{(1+z)^\frac{3}{2}\sqrt{z}}\mathrm{d}z}, \qquad x,y \in B(0,R),
$$
and
$$
\left\{
\begin{aligned}
d_R & = \frac{|x-y|^2}{R^2}, \\
t_R & = \left(1-\frac{|x|^2}{R^2}\right)\left(1-\frac{|y|^2}{R^2}\right), \\
C & = \frac{1}{2\pi^2}\left(\int_{0}^{+\infty}{\frac{1}{(1+z)^\frac{3}{2}\sqrt{z}}\mathrm{d}z}\right)^{-1}.
\end{aligned}
\right.
$$
We can deduce that $u_R \in C(\mathbb{R}^3) \cap \mathcal{L}_1(\mathbb{R}^3) \cap C^{1,\varepsilon}_{loc}(B(0,R))$ solves
$$
\left\{
\begin{aligned}
& (-\Delta)^\frac{1}{2}u_R = e^{pv} \qquad \text{in} \; B(0,R), \\
& u_R = 0 \qquad\qquad\quad\;\; \text{in} \; \mathbb{R}^3 \backslash B(0,R).
\end{aligned}
\right.
$$
Using Lemma \ref{lemma Maximum Principle}, for any $R > 0$, we have
$$
u \geqslant u_R .
$$
Taking $R\to\infty$, we get
$$
u(x) \geqslant \frac{1}{2\pi^2}\int_{\mathbb{R}^3}{\frac{e^{pv(y)}}{|x-y|^2}\mathrm{d}y} \triangleq \overline{u}(x).
$$
Then by Lemma \ref{lemma Liouville Theorem}, we get
$$
u(x) = \overline{u}(x) + C_u \geqslant 0.
$$
If $u(x) \geqslant C_u > 0$, then
$$
\iint_{\mathbb{R}^3\times\mathbb{R}^3}{\frac{u^2(x)u^2(y)}{|x-y|^2}\mathrm{d}x\mathrm{d}y}\geq \iint_{\mathbb{R}^3\times\mathbb{R}^3}{\frac{C_u^4}{|x-y|^2}\mathrm{d}x\mathrm{d}y} = +\infty,
$$
which is a contradiction. Thus $C_u$ must be zero, i.e.,
$$
u(x) = \frac{1}{2\pi^2}\int_{\mathbb{R}^3}{\frac{e^{pv(y)}}{|x-y|^2}\mathrm{d}y},
$$
and hence $u>0$ in $\mathbb{R}^3$, and
$$
\int_{\mathbb{R}^3}{\frac{e^{pv(y)}}{|y|^2}\mathrm{d}y}\leq 2\pi^2 u(0)< +\infty.
$$
Moreover, for $x$ large enough,
$$
\begin{aligned}
u(x)\geqslant & \frac{1}{2\pi^2}\int_{\mathbb{R}^3}{\frac{e^{pv(y)}}{|x-y|^2}\mathrm{d}y} \\
\geqslant & \frac{1}{2\pi^2}\int_{1 \leqslant |y| < \frac{|x|}{2}}{\frac{e^{pv(y)}}{|x-y|^2}\mathrm{d}y} \\
\geqslant & \frac{2}{9\pi^2|x|^2}\int_{1 \leqslant |y| < \frac{|x|}{2}}{\frac{e^{pv(y)}}{|y|^2}\mathrm{d}y} \\
\geqslant & \frac{1}{9\pi^2|x|^2}\int_{|y| \geqslant 1}{\frac{e^{pv(y)}}{|y|^2}\mathrm{d}y} \\
\geqslant & \frac{C}{|x|^2}.
\end{aligned}
$$
\end{proof}

Due to \eqref{fm}, we can define
$$
\zeta(x) \triangleq \frac{1}{2\pi^2}\int_{\mathbb{R}^3}{P_3(y)\ln{\left(\frac{|y|}{|x-y|}\right)}\mathrm{d}y}.
$$

Next, we need to prove the integral representation formula and the asymptotic property of $v$. Before that, we need the following lemmas.

\begin{lem}\label{lemma Lowerbound of xi(x) for n = 3}
Assume $n = 3$. Let $(u,v)$ be a pair of classical solutions to the system \eqref{system} such that $u \geqslant 0$ and satisfies \eqref{fm}. Then there is a constant $C$ such that when $|x|$ is sufficiently large, one has
$$
-\zeta(x) \leqslant \alpha\ln{|x|}+C,
$$
where $\alpha \triangleq \frac{1}{2\pi^2}\int_{\mathbb{R}^3}{P_3(y)\mathrm{d}y} \in (0,+\infty)$.
\end{lem}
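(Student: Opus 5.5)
We need an upper bound for $-\zeta(x)$, where
$$
-\zeta(x)=\frac{1}{2\pi^2}\int_{\mathbb{R}^3}P_3(y)\ln\frac{|x-y|}{|y|}\,\mathrm{d}y .
$$
The plan is a direct splitting argument, using only that $P_3\geqslant 0$ and $P_3\in L^1(\mathbb{R}^3)$. The integrability is exactly \eqref{fm}. Positivity of $\alpha$ follows from $u>0$ (Lemma \ref{lemma Integral Representation of u for n = 3}), since then $P_3>0$ everywhere; finiteness of $\alpha$ is again \eqref{fm}.

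Fix $|x|\geqslant 2$ and split $\mathbb{R}^3$ into
$$
A_1=\{|y|\leqslant 1\},\qquad A_2=\{|y|>1,\ |x-y|\leqslant 2|x|\},\qquad A_3=\{|y|>1,\ |x-y|>2|x|\}.
$$

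\textbf{Region $A_2$.} Here $|y|>1$, so $\ln\frac{|x-y|}{|y|}\leqslant \ln|x-y|\leqslant \ln 2+\ln|x|$. The contribution is therefore at most
$$
\frac{1}{2\pi^2}\bigl(\ln|x|+\ln2\bigr)\int_{A_2}P_3 .
$$

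\textbf{Region $A_3$.} Here $|y|\geqslant |x-y|-|x|>\tfrac12|x-y|$, so $\ln\frac{|x-y|}{|y|}\leqslant\ln2$. The contribution is at most $\frac{\ln2}{2\pi^2}\int_{A_3}P_3$.

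\textbf{Region $A_1$.} This is the only place needing care, because of the singularity $\ln\frac{1}{|y|}$ at $y=0$. Since $|y|\leqslant1$, we have $\ln|x-y|\leqslant\ln(|x|+1)\leqslant \ln|x|+\ln2$, so the piece coming from $\ln|x-y|$ is at most $\frac{1}{2\pi^2}(\ln|x|+\ln 2)\int_{A_1}P_3$. The remaining piece is
$$
\int_{|y|\leqslant1}P_3(y)\ln\frac{1}{|y|}\,\mathrm{d}y .
$$
This is independent of $x$, so it suffices to check that it is finite. By Lemma \ref{lemma Integral Representation of u for n = 3}, $u$ is continuous, hence bounded on $B(0,1)$. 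The factor $\frac{1}{|\cdot|^2}*u^2$ is also locally bounded: split it as
$$
\int_{|z-y|<1}\frac{u^2(z)}{|y-z|^2}\,\mathrm{d}z+\int_{|z-y|\geqslant1}\frac{u^2(z)}{|y-z|^2}\,\mathrm{d}z .
$$
The first term is bounded using local boundedness of $u$ and the integrability of $|y-z|^{-2}$ in $\mathbb{R}^3$. For the second term, \eqref{fm} gives finiteness of $\int u^2(z)|y_0-z|^{-2}\,\mathrm{d}z$ at a.e.\ point $y_0$, hence at some point. Comparing kernels for $|y|\leqslant1$ away from the singular region then bounds $\int_{|z-y|\geqslant1}u^2(z)|y-z|^{-2}\,\mathrm{d}z$ uniformly for $|y|\leqslant1$. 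Thus $P_3$ is bounded on $B(0,1)$, and since $\ln\frac{1}{|y|}$ is integrable there, the remaining piece is a finite constant.

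\textbf{Conclusion.} Summing the three regions gives
$$
-\zeta(x)\leqslant \frac{1}{2\pi^2}\ln|x|\int_{\mathbb{R}^3}P_3+C=\alpha\ln|x|+C .
$$
The main obstacle is the local boundedness of $P_3$ near the origin. If the kernel-comparison argument for $\frac{1}{|\cdot|^2}*u^2$ proves awkward, the fallback is weaker but sufficient: $P_3\in L^1$ and $u$ is bounded near $0$, and local $L^q$ integrability of $P_3$ for some $q>1$, obtained via Hardy--Littlewood--Sobolev (Lemma \ref{lemma Hardy-Littlewood-Sobolev Inequality}) applied to $u^2\chi_{B(0,2)}$ plus a bounded tail, can be paired with $\ln\frac{1}{|y|}\in L^{q'}(B(0,1))$ via Hölder's inequality.
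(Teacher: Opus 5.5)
Your proof is correct, and it treats the region near $y=0$ differently from the paper, and more carefully.

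The paper splits into $S_1=\{|x-y|\leqslant |x|/2\}$ and its complement. On $S_1$ we have $|y|\geqslant |x-y|$, so the logarithm is nonpositive. The complement is subdivided by $|y|\geqslant 2$, where $|x-y|\leqslant |x||y|$ gives $\ln\frac{|x-y|}{|y|}\leqslant \ln|x|$, and by $|y|\leqslant 2$, where the paper asserts $\ln\frac{|x-y|}{|y|}\leqslant 0$. That last claim is false for large $|x|$, since there $\frac{|x-y|}{|y|}\geqslant \frac{|x|-2}{2}$. This piece actually contributes up to $\frac{1}{2\pi^2}\bigl(\ln(|x|+2)\int_{|y|\leqslant 2}P_3+\int_{|y|<1}P_3\ln\frac{1}{|y|}\bigr)$.

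The conclusion still holds, because the first term is absorbed into $\alpha\ln|x|$. But that requires knowing the second integral is finite, which the paper never verifies. It also uses this finiteness tacitly when defining $\zeta$ and in the proof of Lemma \ref{lemma Integral Representation of v for n = 3}.

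Your region $A_1$ isolates exactly this term, and your argument supplies the missing step. By \eqref{fm} and Tonelli, $\frac{1}{|\cdot|^2}*u^2$ is finite at some $y_0$. Together with $|y_0-z|\leqslant (|y_0|+2)|y-z|$ for $|y|\leqslant 1\leqslant |y-z|$, this gives $P_3\in L^\infty(B(0,1))$.

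In the other direction, you do not need the paper's region $S_1$. For an upper bound, $\ln|x-y|\leqslant \ln(2|x|)$ on $A_2$ already covers the neighborhood of $y=x$, so your three-region split is both simpler and complete. The HLS fallback is unnecessary.
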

\begin{proof}
Fix $x$ such that $|x|$ large enough, let us denote
$$
\begin{aligned}
& S_1 = \left\{y \left| |x-y| \leqslant \frac{|x|}{2} \right.\right\}, \\
& S_2 = \left\{y \left| |x-y| \geqslant \frac{|x|}{2} \right.\right\}.
\end{aligned}
$$
It is easy to check that $\mathbb{R}^3 = S_1 \cup S_2$ and for $y \in S_1$, one has $|y| \geqslant |x|-|x-y| \geqslant \frac{|x|}{2} \geqslant |x-y|$, which indicates $\ln{\left(\frac{|x-y|}{|y|}\right)} \leqslant 0$. For $y \in S_2$ and $|y| \geqslant 2$, one has $|x-y| \leqslant |x|+|y| \leqslant |x||y|$ (note that $|x|$ is sufficiently large). Finally, for $y \in S_2$ and $|y| \leqslant 2$, one has $|x-y| \leqslant |x|$ (note that $|x|$ is sufficiently large), which also indicates $\ln{\left(\frac{|x-y|}{|y|}\right)} \leqslant 0$.
Therefore,
$$
\begin{aligned}
-\zeta(x) & = \frac{1}{2\pi^2}\int_{\mathbb{R}^3}{P_3(y)\ln{\left(\frac{|x-y|}{|y|}\right)}\mathrm{d}y} \\
& \leqslant \frac{1}{2\pi^2}\int_{|x-y| \geqslant \frac{|x|}{2}}{P_3(y)\ln{\left(\frac{|x-y|}{|y|}\right)}\mathrm{d}y} \\
& \leqslant \frac{\ln{|x|}}{2\pi^2}\int_{\{|x-y| \geqslant \frac{|x|}{2}\} \cap\{|y| \geqslant 2\}}{P_3(y)\mathrm{d}y} + C \\
& \leqslant \alpha\ln{|x|}+C.
\end{aligned}
$$
\end{proof}

Now we can prove the integral representation for $\Delta{v}$.

\begin{lem}\label{lemma Integral Representation of Delta v for n = 3}
Assume $n = 3$. Let $(u,v)$ be a pair of classical solutions to the system \eqref{system} such that $u \geqslant 0$ and satisfies \eqref{fm}. Then
$$
\Delta{v}(x) = -\frac{1}{2\pi^2}\int_{\mathbb{R}^3}{\frac{P_3(y)}{|x-y|^2}\mathrm{d}y}-C
$$
for some constant $C \geqslant 0$.
\end{lem}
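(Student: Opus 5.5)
Set $w := -\Delta v$, so the second equation of \eqref{system} with $n=3$ reads $(-\Delta)^{\frac{1}{2}}w = P_3$ in $\mathbb{R}^3$, and $P_3 \geqslant 0$ with $P_3 \in L^1(\mathbb{R}^3)$ by \eqref{fm}. We run the same scheme as in Lemma \ref{lemma Integral Representation of u for n = 3}, with $e^{pv}$ replaced by $P_3$.

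\textbf{Step 1: the Newtonian potential of $P_3$.} Define
$$
\overline{w}(x) := \frac{1}{2\pi^2}\int_{\mathbb{R}^3}\frac{P_3(y)}{|x-y|^2}\mathrm{d}y .
$$
Since $P_3 \in L^1$, the function $\overline{w}$ is finite a.e. and lies in $\mathcal{L}_1(\mathbb{R}^3)$. Indeed, Fubini gives
$$
\int \frac{\overline{w}(x)}{1+|x|^4}\,\mathrm{d}x \leqslant C\|P_3\|_{L^1},
$$
because $\int \frac{\mathrm{d}x}{|x-y|^2(1+|x|^4)}$ is bounded uniformly in $y$. Local regularity of $P_3$ (since $u \in C^{1,\varepsilon}_{loc}$, $P_3$ is locally Hölder) makes $\overline{w}$ continuous, $C^{1,\varepsilon}_{loc}$, and a solution of $(-\Delta)^{\frac{1}{2}}\overline{w} = P_3$. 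Then $h := w - \overline{w}$ satisfies $(-\Delta)^{\frac{1}{2}} h = 0$ in $\mathbb{R}^3$. The statement to prove is $h \equiv -C$ with $C \geqslant 0$.

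\textbf{Step 2: a lower bound on $-w$.} Apply the Green's function comparison of Lemma \ref{lemma Integral Representation of u for n = 3} to $-w$ rather than to $w$ (since $-w = \Delta v$). Here there is no sign assumption on $w$, so Lemma \ref{lemma Liouville Theorem} cannot be applied directly; we first need a one-sided bound on $h$. The plan is to show that $h$ is bounded above, or at least that $h^+ = o(|x|)$ in an averaged sense, and then conclude that $h$ is constant with $h \leqslant 0$.

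\textbf{Step 3: the growth bound on $h$ (the main obstacle).} The control of $h$ comes from $v = o(|x|^2)$, and this is the delicate point. Write $v = \zeta + \gamma$, where $\zeta$ is the logarithmic potential defined above, and note $\Delta\zeta = -\overline{w}$ in the distributional sense (the kernel $\frac{1}{2\pi^2}\ln\frac{1}{|x|}$ has Laplacian $-\frac{1}{2\pi^2|x|^2}$ in $\mathbb{R}^3$). Then $\Delta(v - \zeta) = -h$. Since $(-\Delta)^{\frac12}h = 0$, we get $(-\Delta)^{\frac{3}{2}}(v-\zeta) = 0$.

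To convert this into a Liouville statement, use the $\mathcal{L}_1$ Liouville theory. An $\alpha$-harmonic function ($\alpha = 1$) lying in $\mathcal{L}_1$ is affine, and in fact constant, since $\mathcal{L}_1$ forces sublinear growth on average. Then $\Delta(v-\zeta) = -h = \text{const}$, so $v - \zeta$ is a quadratic polynomial plus a harmonic function. From $v = o(|x|^2)$ and the bound $\zeta(x) \geqslant -\alpha\ln|x| - C$ of Lemma \ref{lemma Lowerbound of xi(x) for n = 3}, we get $(v-\zeta)^+ = o(|x|^2)$ wait—we need the direction that controls the constant. Write $v-\zeta = -\frac{h}{6}|x|^2 + H$ with $H$ harmonic. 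Then $H = v - \zeta + \frac{h}{6}|x|^2$, and $H^+ \leqslant o(|x|^2) + \alpha\ln|x| + \frac{h}{6}|x|^2$.

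If $h > 0$, this growth is of order $|x|^2$, so Lemma \ref{lemma Classification of Harmonic Functions}(i) makes $H$ a quadratic polynomial. Its average over spheres grows at most linearly, whereas the spherical average of $v - \zeta + \frac{h}{6}|x|^2$ behaves like $\frac{h}{6}r^2 + o(r^2)$, using that $\zeta$ is logarithmic on average. Pinning down the sign of $h$ requires exactly this kind of spherical-average comparison, which I expect to be the hardest part, since $\zeta$ is controlled only from one side pointwise.

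\textbf{Step 4: conclusion.} The cleaner route is to compare spherical means. The mean of $\Delta v$ over $\partial B_r$ equals $-\overline{\overline{w}}(r) + h$. Integrating twice in $r$, the contribution of $\overline{w}$ to the mean of $v$ is $O(\ln r)$, while $h$ contributes $\frac{h}{6}r^2$ wait—$\Delta v = -w = -\overline{w} - h$, so this term is $-\frac{h}{6}r^2$. The condition $v = o(|x|^2)$ then forces $h = 0$ if both signs are excluded. The statement allows $C \geqslant 0$, i.e. $h = C \geqslant 0$, and this sign is the one guaranteed by the maximum principle argument of Step 2: the Green's function comparison $w \geqslant w_R$ on balls, letting $R \to \infty$, gives $w \geqslant \overline{w}$, so $h \geqslant 0$. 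So the final order of steps is: $h$ is a constant (Liouville plus the $\mathcal{L}_1$ growth bound), and $h \geqslant 0$ (maximum principle applied to $w - w_R$, after first ensuring $\liminf w \geqslant 0$ at infinity or on exterior sets via the spherical-mean argument).
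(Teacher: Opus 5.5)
There is a genuine gap in how you fix the sign of the constant. In Steps 3--4 you use $v(x)=o(|x|^2)$, and Step 3 even writes $v=\zeta+\gamma$; neither is available here. This lemma assumes only $u\geqslant 0$ and \eqref{fm}. The growth condition on $v$ enters only in Lemma \ref{lemma Precise Integral Representation of Delta v for n = 3}. The identity $v=\zeta+\gamma$ is Lemma \ref{lemma Integral Representation of v for n = 3}, whose proof relies on the present lemma, so using it is circular. Granting $v=o(|x|^2)$, your spherical-mean computation would give $h=0$, which proves the next lemma, not this one.

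Your other route to $h\geqslant 0$ is the comparison $w\geqslant w_R$ through Lemma \ref{lemma Maximum Principle}. It requires $w=-\Delta v\geqslant 0$ on $\mathbb{R}^3\setminus B(0,R)$, which is exactly what is unknown. In fact no argument based only on the second equation can give the sign: replacing $v$ by $v+\frac{c}{6}|x|^2$ leaves $(-\Delta)^{\frac{3}{2}}v=P_3$ unchanged and shifts your $h$ by $-c$. (Also, since $-\Delta v=\overline{w}+C$, the target in Step 1 is $h\equiv C\geqslant 0$, not $h\equiv -C$.)

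The missing ingredient is the first equation, through $\int_{\mathbb{R}^3}|y|^{-2}e^{pv(y)}\,\mathrm{d}y<+\infty$ from Lemma \ref{lemma Integral Representation of u for n = 3}; this is what the paper uses. Set $m=v-\zeta$ with $\Delta m$ harmonic. Integrating the mean value property gives $\fint_{\partial B(x_0,r)}m\,\mathrm{d}\sigma=m(x_0)+\frac{r^2}{6}\Delta m(x_0)$. Jensen's inequality then bounds $e^{pm(x_0)+\frac{pr^2}{6}\Delta m(x_0)}$ by $\fint_{\partial B(x_0,r)}e^{pm}\,\mathrm{d}\sigma$. The one-sided estimate $m\leqslant v+\alpha\ln|x|+C$ from Lemma \ref{lemma Lowerbound of xi(x) for n = 3} converts the integrability of $|y|^{-2}e^{pv}$ into $r^{-p\alpha}e^{\frac{pr^2}{6}\Delta m(x_0)}\in L^1_r([1,+\infty))$. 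This forces $\Delta m(x_0)\leqslant 0$ at every $x_0$, and the Liouville theorem for harmonic functions bounded above finishes.

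Your Liouville step for $h$ (a $\frac{1}{2}$-harmonic function in $\mathcal{L}_1(\mathbb{R}^3)$ is constant) is a legitimate, even more direct, substitute for the paper's passage to harmonicity of $\Delta m$. It does need the alternative $\Delta v\in\mathcal{L}_1$ of the standing assumption. Once $\Delta m\equiv -h$ is known to be constant, the same Jensen argument immediately yields $h\geqslant 0$. Without that argument, however, your proposal does not prove the statement.
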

\begin{proof}
Denote $m(x) = v(x)-\zeta(x)$, from \eqref{system}, we have
$$
(-\Delta)^\frac{3}{2}{m} = 0,
$$
which implies $(-\Delta)^2{m} = 0$, i.e., $\Delta{m}$ is harmonic in $\mathbb{R}^3$. From the mean value
theorem of harmonic function, for any $x_0 \in \mathbb{R}^3$ and $r > 0$, we derive
$$
\begin{aligned}
\Delta{m}(x_0) & = \frac{3}{4\pi r^2}\int_{B(x_0,r)}{\Delta{m}(y)\mathrm{d}y} \\
& = \frac{3}{4\pi r^2}\int_{\partial B(x_0,r)}{\frac{\partial m}{\partial r}\mathrm{d}\sigma}.
\end{aligned}
$$
Integrating from $0$ to $r$, we obtain
$$
\frac{r^2}{6}\Delta{m}(x_0) = \fint_{\partial B(x_0,r)}{m(y)\mathrm{d}\sigma} - m(x_0).
$$
Using the Jensen's inequality, we have
$$
\begin{aligned}
e^{\frac{r^2}{6}\Delta{m}(x_0)} & = e^{-m(x_0)}e^{\fint_{\partial B(x_0,r)}{m(y)\mathrm{d}\sigma}} \\
& \leqslant e^{-m(x_0)}\fint_{\partial B(x_0,r)}{e^{m(y)}\mathrm{d}\sigma}.
\end{aligned}
$$
By Lemma \ref{lemma Lowerbound of xi(x) for n = 3}, we have $m(x) = v(x) - \zeta(x) \leqslant v(x) + \alpha\ln{|x|}+C$. Combining this with the above inequality and $|x|^{-2}e^{pv(x)} \in L^1(\mathbb{R}^3)$ in Lemma \ref{lemma Integral Representation of u for n = 3}, we get
$$
r^{-p\alpha}e^{\frac{pr^2}{6}\Delta{m}(x_0)} \in L^1_r([1,+\infty)).
$$
Thus $\Delta{m}(x_0) \leqslant 0$ for all $x_0 \in \mathbb{R}^3$. By Liouville Theorem, $\Delta{m}(x) = -C$ in $\mathbb{R}^3$ for some constant $C \geqslant 0$. Together with $\Delta{\zeta}(x) = -\frac{1}{2\pi^2}\int_{\mathbb{R}^3}{\frac{P_3(y)}{|x-y|^2}\mathrm{d}y}$, we get $\Delta{v}(x) = -\frac{1}{2\pi^2}\int_{\mathbb{R}^3}{\frac{P_3(y)}{|x-y|^2}\mathrm{d}y} - C$.
\end{proof}

Now we need to assume $v(x) = o(|x|^2)$ at $\infty$, and get the precise integral representation formula for $\Delta{v}(x)$.

\begin{lem}\label{lemma Precise Integral Representation of Delta v for n = 3}
Assume $n = 3$. Let $(u,v)$ be a pair of classical solutions to the system \eqref{system} such that $u \geqslant 0$ and satisfies \eqref{fm}, and $v(x) = o(|x|^2)$ at $\infty$. Then we have
$$
\Delta{v}(x) = -\frac{1}{2\pi^2}\int_{\mathbb{R}^3}{\frac{P_3(y)}{|x-y|^2}\mathrm{d}y}.
$$
\end{lem}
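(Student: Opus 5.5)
By Lemma \ref{lemma Integral Representation of Delta v for n = 3} we already have
$$
\Delta v(x) = -\frac{1}{2\pi^2}\int_{\mathbb{R}^3}\frac{P_3(y)}{|x-y|^2}\mathrm{d}y - C_0
$$
for some constant $C_0\geqslant 0$. So it suffices to show $C_0=0$, and the plan is to argue by contradiction assuming $C_0>0$. Set $m=v-\zeta$ as in that proof; then $\Delta m\equiv -C_0$. The function $h(x):=m(x)+\frac{C_0}{6}|x|^2$ is therefore harmonic in $\mathbb{R}^3$, and
$$
v(x)=\zeta(x)-\frac{C_0}{6}|x|^2+h(x).
$$

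The first step is to control $h$ from above using Lemma \ref{lemma Classification of Harmonic Functions}. From $v(x)=o(|x|^2)$ we get
$$
h^+ \leqslant \Big(v-\zeta+\tfrac{C_0}{6}|x|^2\Big)^+ \leqslant \big(-\zeta\big)^+ + \tfrac{C_0}{6}|x|^2 + o(|x|^2),
$$
and Lemma \ref{lemma Lowerbound of xi(x) for n = 3} gives $(-\zeta)^+\leqslant \alpha\ln|x|+C$. Hence $h^+=O(|x|^2)$, so by part (i) of Lemma \ref{lemma Classification of Harmonic Functions} $h$ is a harmonic polynomial of degree at most $2$. Writing $h(x)=x^TAx+b\cdot x+c$ with $\operatorname{tr}A=0$, we obtain
$$
v(x)=\zeta(x)+x^T\Big(A-\tfrac{C_0}{6}I\Big)x+b\cdot x+c .
$$
The matrix $A-\frac{C_0}{6}I$ has trace $-\frac{C_0}{2}<0$, so it has at least one negative eigenvalue.

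The second step is to contradict $v=o(|x|^2)$, which should be the main obstacle. The quadratic part of $v$ must vanish at infinity compared to $|x|^2$, so $\zeta$ would need to cancel it. That is impossible provided $\zeta$ grows at most like $o(|x|^2)$ from above along some direction where the quadratic form is negative definite. The upper bound on $-\zeta$ is already available; the extra estimate needed is a complementary bound on $\zeta$ itself.

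To get it, I would take $e$ with $e^T(A-\frac{C_0}{6}I)e=-\delta<0$ and restrict to the line $x=te$. One route is to show $\zeta(x)\leqslant C\ln|x|+C$ off a set of small measure, splitting $\ln\frac{|y|}{|x-y|}$ over $|x-y|\geqslant 1$ (bounded by $\ln(|x|+1)$ times $\int P_3$) and $|x-y|<1$ (the logarithmic singularity). The singular piece can be handled by averaging. Instead of pointwise estimates, I would average $v$ over balls $B(te,|t|/10)$, where the quadratic form stays $\leqslant -\frac{\delta}{2}|x|^2$ for $|t|$ large. By Fubini, the local log-singular part of $\zeta$ has average bounded by $C\ln|t|\int P_3$, since $\int_{B(x,r)}\ln^+\frac{1}{|x-y|}\,\mathrm{d}x$ is bounded uniformly. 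This gives
$$
\fint_{B(te,|t|/10)} v \leqslant -\tfrac{\delta}{4}t^2
$$
for large $|t|$. On the other hand, $v=o(|x|^2)$ forces $|v|\leqslant \varepsilon t^2$ on that ball, so
$$
\fint_{B(te,|t|/10)} v \geqslant -\varepsilon t^2,
$$
which contradicts the previous bound once $\varepsilon<\delta/4$.

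Therefore $C_0=0$, which is the claimed formula for $\Delta v$. As a byproduct, $h$ is of degree at most one, which will be used later.
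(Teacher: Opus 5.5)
Your argument is correct in substance, but it takes a longer route than the paper. The paper never decomposes $v$. Assuming $C>0$, it uses only $\Delta v\leqslant -C$ on $\mathbb{R}^3$ and the one-sided bound $v(y)\geqslant-\frac{\varepsilon}{2}|y|^2$ for $|y|\geqslant R_0$ (with $\varepsilon<C/12$). It then applies the minimum principle to the barrier $v(y)+\varepsilon|y|^2+B(|y|^{-1}-R_0^{-1})$ on $\{|y|\geqslant R_0\}$. This function is strictly superharmonic there, nonnegative on $|y|=R_0$, and tends to $+\infty$ at infinity. It is also negative somewhere once $B$ is large, so it would have an interior minimum, which is impossible. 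Equivalently, $\Delta v\leqslant -C$ gives $\fint_{\partial B(0,r)}v\,\mathrm{d}\sigma\leqslant v(0)-\frac{C}{6}r^2$, which contradicts $v\geqslant -o(r^2)$.

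Your proof instead needs Lemma \ref{lemma Classification of Harmonic Functions}(i), the upper half of $v=o(|x|^2)$ together with Lemma \ref{lemma Lowerbound of xi(x) for n = 3}, and an averaged upper bound on $\zeta$. Your Tonelli treatment of the logarithmic singularity is fine; it actually gives an $O(|t|^{-3})$ contribution. Your route produces the explicit structure $v=\zeta+{}$quadratic polynomial, but that extra information is not needed. Once $C_0=0$, the degree-one statement used in Lemma \ref{lemma Integral Representation of v for n = 3} follows directly from Lemma \ref{lemma Classification of Harmonic Functions}(ii).

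One step needs tightening. Let $M:=A-\frac{C_0}{6}I$, taken symmetric. You claim $x^TMx\leqslant-\frac{\delta}{2}|x|^2$ on $B(te,|t|/10)$ ``for $|t|$ large''. This is false for an arbitrary unit $e$ with $e^TMe=-\delta$:
\begin{itemize}
\item the form is homogeneous, so taking $|t|$ large does not help;
\item for $x=te+w$, the terms $2t\,e^TMw$ and $w^TMw$ are only controlled by $\|M\|t^2$, which may be much larger than $\delta t^2$.
\end{itemize}
The claim does hold if $e$ is a unit eigenvector for the most negative eigenvalue $-\delta$. Then $\operatorname{tr}M<0$ forces $\|M\|\leqslant 2\delta$, whence $x^TMx\leqslant-\delta t^2+2\delta|t||w|+2\delta|w|^2\leqslant-\frac{39}{50}\delta t^2$. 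Alternatively, shrink the radius to $\rho|t|$ with $\rho$ depending on $\delta/\|M\|$. Or simply average over $B(0,r)$, where the quadratic part averages to $\frac{r^2}{5}\operatorname{tr}M=-\frac{C_0}{10}r^2$ and no eigenvector is needed. At that point, though, you are just reproving the superharmonicity of $v+\frac{C_0}{6}|x|^2$, which gives the contradiction without any decomposition.
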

\begin{proof}
By Lemma \ref{lemma Integral Representation of Delta v for n = 3}, we have
$$
\Delta{v}(x) = -\frac{1}{2\pi^2}\int_{\mathbb{R}^3}{\frac{P_3(y)}{|x-y|^2}\mathrm{d}y} - C.
$$
If $C > 0$, take $\varepsilon \in (0,\frac{C}{12})$ and $R_0$ large enough such that
$$
\begin{aligned}
& \Delta{v}(x) \leqslant -C < 0 \;\text{in}\; \mathbb{R}^3, \\
& v(y) + \varepsilon|y|^2 \geqslant \frac{\varepsilon}{2}|y|^2 \geqslant 0 \quad \text{for}\; |y| \geqslant R_0.
\end{aligned}
$$
For $B > 0$, define
$$
n(y) = v(y) + \varepsilon|y|^2+B(|y|^{-1}-R_0^{-1}).
$$
Thus
$$
\Delta{n}(y) = \Delta{v}(y) + 6\varepsilon < -\frac{C}{2} < 0
$$
for $|y| \geqslant R_0$. Furthermore, we get
$$
\begin{aligned}
& \lim_{|y|\to+\infty}{n(y)} = +\infty \;\text{for any}\; B > 0, \\
& \lim_{B\to+\infty}{n(y)} = -\infty \;\text{for any}\; y \in \mathbb{R}^3 \backslash \overline{B(0,R_0)}.
\end{aligned}
$$
Thus, we can take $B$ sufficiently large such that
$\displaystyle \inf_{|y| \geqslant R_0}{n(y)}$ is attained by some $y_0 \in \mathbb{R}^3$ with $|y_0| > R_0$. By applying the maximum principle to $n(y)$, we derive a contradiction. That implies $C = 0$.
\end{proof}

Finally, we can prove the integral representation formula and the asymptotic property for $v$.

\begin{lem}[Integral Representation of $v$]\label{lemma Integral Representation of v for n = 3}
Assume $n=3$ and $p>0$. Let $(u,v)$ be a pair of classical solutions to the system \eqref{system} such that $u \geqslant 0$, $v(x)=o(|x|^2)$ as $|x|\rightarrow+\infty$ and $u$ satisfies the finite total mass condition \eqref{fm}. Suppose $u(x) = O(|x|^K)$ as $|x|\rightarrow+\infty$ for some $K \gg 1$ arbitrarily large. Then we have
$$
v(x) = \zeta(x)+\gamma = \frac{1}{2\pi^2}\int_{\mathbb{R}^3}{P_3(y)\ln{\left(\frac{|y|}{|x-y|}\right)}\mathrm{d}y}+\gamma,
$$
where $\gamma \in \mathbb{R}$ is a constant. Moreover,
$$
\lim_{|x|\to\infty}{\frac{v(x)}{\ln{|x|}}} = -\alpha,
$$
where $\alpha \triangleq \frac{1}{2\pi^2}\int_{\mathbb{R}^3}{P_3(y)\mathrm{d}y} \in (0,+\infty)$.
\end{lem}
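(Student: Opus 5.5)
Set $m(x) := v(x)-\zeta(x)$. By Lemma \ref{lemma Precise Integral Representation of Delta v for n = 3} and $\Delta\zeta(x) = -\frac{1}{2\pi^2}\int_{\mathbb{R}^3}\frac{P_3(y)}{|x-y|^2}\mathrm{d}y$, we have $\Delta m\equiv 0$ in $\mathbb{R}^3$. We want $m\equiv\gamma$, and for this we use Lemma \ref{lemma Classification of Harmonic Functions}. This requires a growth bound on $m^+$, hence an upper bound on $v$ and a lower bound on $\zeta$.

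\emph{Step 1: upper bound on $\zeta$.} We aim for $\zeta(x)\le C\ln|x|$, or at least $\zeta(x)=o(|x|^2)$, outside a small set. We split $\mathbb{R}^3$ into $\{|x-y|\ge 1\}$ and $\{|x-y|<1\}$. On the first region, $\ln\frac{|y|}{|x-y|}\le \ln(1+|x|)$ up to constants, so this part contributes at most $C\ln|x|$ since $P_3\in L^1$. The region $|x-y|<1$ carries the logarithmic singularity $\int_{|x-y|<1}P_3(y)\ln\frac{1}{|x-y|}\mathrm{d}y$. We control it with the $\exp^L+L\ln L$ inequality (Lemma \ref{lemma expL+LlnL Inequality}). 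Take $f=\delta\ln\frac{1}{|x-y|}$ (bounded in $\exp^L$ on the unit ball, uniformly in $x$) and $g=P_3/\delta$. We need $\int_{B(x,1)}P_3\ln(P_3+1)\,\mathrm{d}y\le C\ln|x|$. Here the hypothesis $u=O(|x|^K)$ enters. One has $P_3(y)=u^2(y)\int\frac{u^2(z)}{|y-z|^2}\mathrm{d}z$, with $u^2\le C|y|^{2K}$, and the convolution is finite everywhere and polynomially bounded (split near/far and use \eqref{fm} together with local boundedness). Hence $\ln(P_3+1)\le C\ln|x|$ on $B(x,1)$, and $\int P_3\ln(P_3+1)\le C\ln|x|\,\|P_3\|_{L^1}$. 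Thus $\zeta(x)\le C\ln|x|$ for large $|x|$.

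\emph{Step 2: $m$ is constant.} Now $m=v-\zeta$ with $v=o(|x|^2)$ and $-\zeta\le\alpha\ln|x|+C$ (Lemma \ref{lemma Lowerbound of xi(x) for n = 3}). So $m^+=o(|x|^2)$, and Lemma \ref{lemma Classification of Harmonic Functions}(ii) gives $m(x)=a\cdot x+\gamma$. To kill $a$, we use $\int e^{pv}|y|^{-2}<\infty$ (Lemma \ref{lemma Integral Representation of u for n = 3}). If $a\ne0$, then on a cone around the direction $a$ we have $v=\zeta+a\cdot x+\gamma\ge -\alpha'\ln|x|... $; more precisely, by Step 1 applied to $-\zeta$ we get $v\ge a\cdot x-\alpha\ln|x|-C$. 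Then $e^{pv}|y|^{-2}$ grows exponentially on that cone, contradicting integrability. So $v=\zeta+\gamma$.

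\emph{Step 3: asymptotics.} It remains to show $\zeta(x)/\ln|x|\to-\alpha$. Write
\[
\zeta(x)+\alpha\ln|x|=\frac{1}{2\pi^2}\int P_3(y)\ln\frac{|x||y|}{|x-y|}\mathrm{d}y .
\]
On $|y|\le R$ with $R$ fixed, the kernel is bounded, so this part is $o(\ln|x|)$. On $|y|\ge R$ away from $x$ (say $|x-y|\ge1$), the kernel is $\le 2\ln|x|$ plus $\ln|y|$ terms, and its contribution is $\le C\ln|x|\int_{|y|\ge R}P_3$, which is small relative to $\ln|x|$. The singular part near $y=x$ is handled as in Step 1, provided $\int_{B(x,1)}P_3\ln(P_3+1)=o(\ln|x|)$. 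This follows since $\int_{B(x,1)}P_3\to0$ while $\ln(P_3+1)\le C\ln|x|$. The lower bound is obtained analogously.

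The main obstacle is Step 1 (and its analogue in Step 3): bounding the logarithmically singular integral uniformly, which requires a polynomial pointwise bound on $P_3$ from $u=O(|x|^K)$ via the $\exp^L+L\ln L$ inequality.
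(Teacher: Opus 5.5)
Your proposal is correct and follows essentially the same route as the paper. The ingredients match: harmonicity of $v-\zeta$ with Lemma \ref{lemma Classification of Harmonic Functions}(ii), killing the linear part via $\int_{\mathbb{R}^3}e^{pv}|y|^{-2}\,\mathrm{d}y<\infty$, and the asymptotics of $\zeta$ via the $\exp^L+L\ln L$ inequality on $B(x,1)$ plus a polynomial bound on $P_3$ from $u=O(|x|^K)$; you cut at a fixed radius $R$ where the paper cuts at $\ln|x|$.

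Two minor remarks. Your Step 1 is superfluous, because both uses in Step 2 need only the one-sided bound $\zeta\geqslant-\alpha\ln|x|-C$ from Lemma \ref{lemma Lowerbound of xi(x) for n = 3}, not Step 1. Also, the far part of the polynomial bound for $\frac{1}{|\cdot|^2}*u^2$ comes from \eqref{fm} combined with $u>0$ on a ball, which yields $\int_{\mathbb{R}^3}u^2(z)(1+|z|)^{-2}\,\mathrm{d}z<\infty$, rather than from local boundedness.
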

\begin{proof}
We will first prove the following asymptotic property:
$$
\lim_{|x|\to\infty}{\frac{\zeta(x)}{\ln{|x|}}} = -\alpha.
$$
To this end, we only need to show that
$$
\lim_{|x|\to\infty}{\int_{\mathbb{R}^3}{P_3(y)\frac{\ln{|x-y|}-\ln{|y|}-\ln{|x|}}{\ln{|x|}}\mathrm{d}y}} = 0.
$$
By using the $\exp^L+L\ln{L}$ inequality in Lemma \ref{lemma expL+LlnL Inequality}, we get
$$
\begin{aligned}
& \int_{B(x,1)}{P_3(y)\ln\left(\frac{1}{|x-y|}\right)\mathrm{d}y} \\
\leqslant & \int_{B(x,1)}{\frac{1}{|x-y|}\mathrm{d}y}+\int_{B(x,1)}{P_3(y)\ln\left(P_3(y)+1\right)\mathrm{d}y} \\
\leqslant & 2\pi+\left[\max_{|y-x|\leqslant1}\ln\left(P_3(y)+1\right)\right]\int_{B(x,1)}{P_3(y)\mathrm{d}y}.
\end{aligned}
$$
As a consequence, by the above inequality, the conditions \eqref{fm} and $u = O(|x|^K) $ at $\infty$ for some $K \gg 1$ arbitrarily large, we have, for any $|x| \geqslant e^2$ large enough,
$$
\begin{aligned}
& \left|\int_{\mathbb{R}^3}{P_3(y)\frac{\ln{|x-y|}-\ln{|y|}-\ln{|x|}}{\ln{|x|}}\mathrm{d}y}\right| \\
\leqslant & 3\int_{B(x,1)}{P_3(y)\mathrm{d}y}+\frac{2\pi}{\ln{|x|}}+\frac{O(2K\ln{|x|})}{\ln{|x|}}\int_{B(x,1)}{P_3(y)\mathrm{d}y} \\
& \; + \frac{\displaystyle \max_{|y|\leqslant\ln{|x|}}{\left|\ln{\frac{|x-y|}{|x|}}\right|}}{\ln{|x|}}\int_{|y|<\ln{|x|}}{P_3(y)\mathrm{d}y}+\frac{1}{\ln{|x|}}\int_{|y|<\ln{|x|}}{|\ln{|y|}|P_3(y)\mathrm{d}y} \\
& \; + \sup_{\substack{|y-x|\geqslant1 \\ |y|\geqslant\ln{|x|}}}{\frac{|\ln{|x-y|}-\ln{|y|}-\ln{|x|}|}{\ln{|x|}}}\int_{|y|\geqslant\ln{|x|}}{P_3(y)\mathrm{d}y} \\
\leqslant & o_{|x|}(1)+\frac{2\pi}{\ln{|x|}}+\frac{\ln{2}}{\ln{|x|}}\int_{\mathbb{R}^3}{P_3(y)\mathrm{d}y}+\frac{1}{\ln{|x|}}\int_{|y|<1}{\ln\left(\frac{1}{|y|}\right)P_3(y)\mathrm{d}y} \\
& \; + \frac{\ln(\ln{|x|})}{\ln{|x|}}\int_{\mathbb{R}^3}{P_3(y)\mathrm{d}y}+\left(2+\frac{\ln{2}}{\ln{|x|}}\right)\int_{|y|\geqslant\ln{|x|}}{P_3(y)\mathrm{d}y} \\
= & o_{|x|}(1),
\end{aligned}
$$
where we have used the fact $1 > \frac{1}{|x|}+\frac{1}{|y|} \geqslant \frac{|x-y|}{|x|\cdot|y|} \geqslant \frac{1}{2|x|^2}$ for any $|y-x| \geqslant 1$ and $|y| \geqslant \ln{|x|}$. By letting $|x| \to +\infty$, we obtain
$$
\lim_{|x|\to\infty}{\int_{\mathbb{R}^3}{P_3(y)\frac{\ln{|x-y|}-\ln{|y|}-\ln{|x|}}{\ln{|x|}}\mathrm{d}y}} = 0.
$$
By Lemma \ref{lemma Precise Integral Representation of Delta v for n = 3} and $\Delta{\zeta}(x) = -\frac{1}{2\pi^2}\int_{\mathbb{R}^3}{\frac{P_3(y)}{|x-y|^2}\mathrm{d}y}$, we obtain $\Delta(v-\zeta) = 0$. From the asymptotic property of $\zeta(x)$ and $v(x) = o(|x|^2)$ at $\infty$, we can immediately derive from Lemma \ref{lemma Classification of Harmonic Functions} (ii) that, for some constants $a_i \in \mathbb{R}$ ($i = 0, 1, 2, 3$),
$$
v(x) - \zeta(x) = \sum_{i = 1}^3{a_ix_i} + a_0, \qquad \forall x \in \mathbb{R}^3.
$$
Since Lemma \ref{lemma Integral Representation of u for n = 3} implies $|x|^{-2}e^{pv(x)} = |x|^{-2}e^{p\zeta(x)}e^{pa_0}e^{p\sum_{i = 1}^3{a_ix_i}} \in L^1(\mathbb{R}^3)$, we infer from the asymptotic
property of $\zeta(x)$ that $a_1 = a_2 = a_3 = 0$. Hence the integral representation formula for $v$
holds. The asymptotic property of $v$ follows immediately.
\end{proof}

As a consequence of Lemma \ref{lemma Integral Representation of v for n = 3}, we have the following Lemma.
\begin{lem}\label{lemma Asymptotic Property for n = 3}
Assume $n=3$ and $p>0$. Let $(u,v)$ be a pair of classical solutions to the system \eqref{system} such that $u \geqslant 0$, $v(x)=o(|x|^2)$ as $|x|\rightarrow+\infty$ and $u$ satisfies the finite total mass condition \eqref{fm}. Suppose $u(x) = O(|x|^K)$ as $|x|\rightarrow+\infty$ for some $K \gg 1$ arbitrarily large. Then we have, for arbitrarily small $\delta > 0$,
$$
\left\{
\begin{aligned}
\lim_{|x|\to\infty}{\frac{e^{pv(x)}}{|x|^{-\alpha p - \delta}}} & = +\infty, \\
\lim_{|x|\to\infty}{\frac{e^{pv(x)}}{|x|^{-\alpha p + \delta}}} & = 0.
\end{aligned}
\right.
$$
Consequently, $\alpha = \frac{1}{2\pi^2}\int_{\mathbb{R}^3}{P_3(y)\mathrm{d}y} \geqslant \frac{1}{p}$. Furthermore, if $\alpha > \frac{3}{p}$, then
$$
\lim_{|x|\to\infty}{|x|^2u(x)} =\beta,
$$
where
$$
\beta= \frac{1}{2\pi^2}\int_{\mathbb{R}^3}{e^{pv(x)}\mathrm{d}x}.
$$
\end{lem}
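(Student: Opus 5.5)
The two limits come directly from Lemma \ref{lemma Integral Representation of v for n = 3}, which gives $v(x)/\ln|x|\to-\alpha$. Fix $\delta>0$. For $|x|$ large we then have
$$
-\alpha-\tfrac{\delta}{2p}\leqslant \frac{v(x)}{\ln|x|}\leqslant -\alpha+\tfrac{\delta}{2p},
$$
that is,
$$
|x|^{-\alpha p-\delta/2}\leqslant e^{pv(x)}\leqslant |x|^{-\alpha p+\delta/2}.
$$
Dividing by $|x|^{-\alpha p-\delta}$ gives a lower bound $|x|^{\delta/2}\to+\infty$, and dividing by $|x|^{-\alpha p+\delta}$ gives an upper bound $|x|^{-\delta/2}\to0$. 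This proves both limits.

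For $\alpha\geqslant\frac1p$ we use the integrability $\int_{\mathbb{R}^3}|y|^{-2}e^{pv(y)}\,dy<+\infty$ from Lemma \ref{lemma Integral Representation of u for n = 3}. Suppose instead that $\alpha p<1$, and pick $\delta>0$ with $\alpha p+\delta\leqslant1$. The first limit gives $e^{pv(y)}\geqslant|y|^{-\alpha p-\delta}\geqslant|y|^{-1}$ for $|y|$ large, so $|y|^{-2}e^{pv}\geqslant|y|^{-3}$ near infinity. This function is not integrable on $\mathbb{R}^3$, a contradiction; hence $\alpha\geqslant\frac1p$.

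For the asymptotics when $\alpha>\frac3p$, choose $\delta$ with $\alpha p-\delta>3$. The second limit then gives $e^{pv(y)}\leqslant|y|^{-3-\eta}$ for large $|y|$ and some $\eta>0$. Since $v$ is continuous, $e^{pv}\in L^1(\mathbb{R}^3)$, so $\beta<\infty$ and moreover $\int|y|^{s}e^{pv}<\infty$ for small $s>0$. Using Lemma \ref{lemma Integral Representation of u for n = 3},
$$
|x|^2u(x)=\frac1{2\pi^2}\int_{\mathbb{R}^3}\frac{|x|^2}{|x-y|^2}e^{pv(y)}\,dy,
$$
and we split $\mathbb{R}^3$ into three regions.

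\emph{Region $|y|\leqslant|x|/2$.} Here the kernel $|x|^2/|x-y|^2$ is bounded by $4$ and tends to $1$ pointwise as $|x|\to\infty$. Dominated convergence therefore gives the limit $\beta$ for this piece.

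\emph{Region $|x-y|\leqslant|x|/2$.} Here $|y|\sim|x|$, so $e^{pv(y)}\leqslant C|x|^{-3-\eta}$. The contribution is then at most $C|x|^{2-3-\eta}\int_{B(x,|x|/2)}|x-y|^{-2}\,dy=C|x|^{-\eta}\to0$.

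\emph{Remaining region.} Here $|x-y|\geqslant|x|/2$ and $|y|\geqslant|x|/2$, so the kernel is bounded by $4$. The contribution is at most $4\int_{|y|\geqslant|x|/2}e^{pv}\to0$, since $e^{pv}\in L^1$.

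Combining the three regions gives $|x|^2u(x)\to\beta$. The only mildly delicate point is the local singular region, and the polynomial decay $e^{pv}\leqslant|y|^{-3-\eta}$ handles it.
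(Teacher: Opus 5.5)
Your proof is correct and follows essentially the paper's route. You get the two limits from $v(x)/\ln|x|\to-\alpha$ and $\alpha\geqslant\frac1p$ from $|y|^{-2}e^{pv}\in L^1(\mathbb{R}^3)$, and you use the same splitting into the region near $x$, the region where $|y|$ and $|x-y|$ are both large, and the region $|y|<|x|/2$. The only difference is on $\{|y|\leqslant|x|/2\}$: you apply dominated convergence to the kernel $|x|^2/|x-y|^2\leqslant4$, whereas the paper explicitly bounds $\frac{||x|^2-|x-y|^2|}{|x-y|^2}\lesssim\frac{|y|}{|x|}$ and needs a case split on $\alpha p$ versus $5$, so your version is slightly cleaner.
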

\begin{proof}
The asymptotic property of $v$ implies that
$$
v(x) = -\alpha\ln{|x|} + o(\ln{|x|}) \quad\text{as}\; |x| \to \infty.
$$
Therefore, we obtain
$$
e^{pv(x)} = |x|^{-\alpha p}e^{o(\ln{|x|})} \quad\text{as}\; |x| \to \infty.
$$
It follows that, for arbitrarily small $\delta > 0$,
$$
\left\{
\begin{aligned}
\lim_{|x|\to\infty}{\frac{e^{pv(x)}}{|x|^{-\alpha p - \delta}}} & = +\infty, \\
\lim_{|x|\to\infty}{\frac{e^{pv(x)}}{|x|^{-\alpha p + \delta}}} & = 0.
\end{aligned}
\right.
$$
From Lemma \ref{lemma Integral Representation of u for n = 3}, one can easily infer that $\alpha \geqslant \frac{1}{p}$. Furthermore, if we assume $\alpha > \frac{3}{p}$, from the asymptotic property of $v$, it follows immediately that $\frac{1}{2\pi^2}\int_{\mathbb{R}^3}{e^{pv(x)}\mathrm{d}x} < +\infty$.
Next, we prove the asymptotic property of $u$. Let $\delta = \frac{\alpha p-3}{2}$, then there exists a $R_0 \geqslant 1$ sufficiently large such that
$$
e^{pv(x)} \leqslant |x|^{-\frac{\alpha p+3}{2}}, \quad \forall |x| \geqslant R_0.
$$
From the integral representation formula of $u$, we only need to show
$$
\lim_{|x|\to\infty}{\int_{\mathbb{R}^3}{\frac{|x|^2-|x-y|^2}{|x-y|^2}e^{pv(y)}}\mathrm{d}y} = 0.
$$
Indeed, for any $|x| \geqslant R_0$, we have
$$
\begin{aligned}
& \left|\int_{\mathbb{R}^3}{\frac{|x|^2-|x-y|^2}{|x-y|^2}e^{pv(y)}}dy\right| \\
\leqslant & 3\int_{|y-x|<\frac{|x|}{2}}{\frac{1}{|x-y|^2|y|^\frac{\alpha p-1}{2}}\mathrm{d}y} + 3\int_{\{|y-x|\geqslant\frac{|x|}{2}\}\cap\{|y|\geqslant\frac{|x|}{2}\}}{e^{pv(y)}\mathrm{d}y} \\
& \; + \frac{6}{|x|}\int_{\{|y-x|\geqslant\frac{|x|}{2}\}\cap\{|y|<\frac{|x|}{2}\}\cap\{|y|<R_0\}}{|y|e^{pv(y)}\mathrm{d}y} \\
& \; + \frac{6}{|x|}\int_{\{|y-x|\geqslant\frac{|x|}{2}\}\cap \{R_0\leqslant|y|<\frac{|x|}{2}\}}{\frac{1}{|y|^\frac{\alpha p+1}{2}}\mathrm{d}y} \\
\leqslant & \frac{3\times2^\frac{\alpha p+1}{2}\pi}{|x|^\frac{\alpha p-3}{2}} + o_{|x|}(1) + \frac{6}{|x|}\int_{|y|<R_0}{|y|e^{pv(y)}\mathrm{d}y} + \frac{24\pi}{|x|}\xi(x) \\
= & o_{|x|}(1),
\end{aligned}
$$
where
$$
\xi(x)
=
\left\{
\begin{aligned}
& \frac{2}{5-\alpha p}\left(\frac{|x|}{2}\right)^\frac{5-\alpha p}{2}, & \;\text{if}\; & 3 < \alpha p < 5, \\
& \ln{\left(\frac{|x|}{2}\right)}, & \;\text{if}\; & \alpha p = 5, \\
& \frac{2}{\alpha p-5}R_0^\frac{5-\alpha p}{2}, & \;\text{if}\; & \alpha p > 5.
\end{aligned}
\right.
$$
Thus we obtain the asymptotic property for $u$.
\end{proof}

As a consequence, we can prove the following asymptotic property of
$$
\int_{\mathbb{R}^3}{\frac{u^2(y)}{|x-y|^2}\mathrm{d}y},
$$
as $|x|$ tends to $\infty$.

\begin{cor}\label{corollary Asymptotic Property of P for n = 3}
Assume $n=3$ and $p>0$. Let $(u,v)$ be a pair of classical solutions to the system \eqref{system} such that $u \geqslant 0$, $v(x)=o(|x|^2)$ as $|x|\rightarrow+\infty$ and $u$ satisfies the finite total mass condition \eqref{fm}. Suppose $u(x) = O(|x|^K)$ as $|x|\rightarrow+\infty$ for some $K \gg 1$ arbitrarily large. If $\alpha > \frac{3}{p}$, then we have
$$
\lim_{|x|\to\infty}{\int_{\mathbb{R}^3}{\frac{|x|^2u^2(y)}{|x-y|^2}\mathrm{d}y}} = \int_{\mathbb{R}^3}{u^2(y)\mathrm{d}y}.
$$
\end{cor}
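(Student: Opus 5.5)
Under the hypothesis $\alpha>\frac{3}{p}$, Lemma \ref{lemma Asymptotic Property for n = 3} gives $\lim_{|x|\to\infty}|x|^2u(x)=\beta<+\infty$. Since $u$ is continuous, there is a constant $C_0$ with $u(y)\leqslant C_0(1+|y|)^{-2}$ on $\mathbb{R}^3$. In particular $u^2(y)\leqslant C_0^2(1+|y|)^{-4}$, so $u^2\in L^1(\mathbb{R}^3)$ and the right-hand side of the claim is finite. We then have to show
$$
\int_{\mathbb{R}^3}\frac{|x|^2-|x-y|^2}{|x-y|^2}u^2(y)\,\mathrm{d}y\to0 \qquad \text{as } |x|\to\infty.
$$
We do this by the same three-region splitting as in the proof of Lemma \ref{lemma Asymptotic Property for n = 3}, with $e^{pv}$ replaced by $u^2$ and the decay $|y|^{-\frac{\alpha p+3}{2}}$ replaced by $|y|^{-4}$.

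\emph{Region $|y-x|<\frac{|x|}{2}$.} Here $|y|\geqslant\frac{|x|}{2}$, so $u^2(y)\leqslant C|x|^{-4}$. The factor $\bigl||x|^2-|x-y|^2\bigr|/|x-y|^2$ is at most $\frac{|x|^2}{|x-y|^2}+1$. Hence this part is bounded by
$$
C|x|^{-2}\int_{|y-x|<|x|/2}\frac{\mathrm{d}y}{|x-y|^2}+C|x|^{-4}|x|^3=O(|x|^{-1}).
$$

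\emph{Region $|y-x|\geqslant\frac{|x|}{2}$, $|y|\geqslant\frac{|x|}{2}$.} The factor is at most $5$, so this part is bounded by $5\int_{|y|\geqslant|x|/2}u^2\,\mathrm{d}y$. This is the tail of an integrable function, so it is $o(1)$.

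\emph{Region $|y-x|\geqslant\frac{|x|}{2}$, $|y|<\frac{|x|}{2}$.} Write $|x|^2-|x-y|^2=2x\cdot y-|y|^2$. Then the factor is bounded by $\frac{2|x||y|+|y|^2}{|x|^2/4}\leqslant\frac{C|y|}{|x|}$. So this part is at most
$$
\frac{C}{|x|}\int_{|y|<|x|/2}|y|u^2(y)\,\mathrm{d}y\leqslant\frac{C}{|x|}\int_{|y|<|x|/2}\frac{|y|}{(1+|y|)^4}\,\mathrm{d}y.
$$
Because $\frac{|y|}{(1+|y|)^4}\sim|y|^{-3}$ at infinity, the last integral grows only like $C\ln|x|$. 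This region therefore contributes $O\bigl(\frac{\ln|x|}{|x|}\bigr)=o(1)$.

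Summing the three regions proves the claim. The only delicate point is this third region, where $|y|u^2$ is just barely non-integrable. The estimate $u=O(|x|^{-2})$ supplied by Lemma \ref{lemma Asymptotic Property for n = 3} is exactly strong enough, since it costs only a logarithm against the gain of $|x|^{-1}$. This is also why the assumption $\alpha>\frac{3}{p}$ is needed.
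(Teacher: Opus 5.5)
Your proof is correct and is essentially the paper's own argument: the same reduction to showing $\int_{\mathbb{R}^3}\frac{|x|^2-|x-y|^2}{|x-y|^2}u^2(y)\,\mathrm{d}y\to0$, the same three regions, and the same $O\bigl(\frac{\ln|x|}{|x|}\bigr)$ bound where $|y|<\frac{|x|}{2}$; the only cosmetic difference is that your global bound $u\leqslant C_0(1+|y|)^{-2}$ replaces the paper's separate treatment of $|y|<R_0$. One small correction to your closing comment: the third region is not actually delicate, because there the factor is bounded by $5$ and tends to $0$ for each fixed $y$, so dominated convergence with $u^2\in L^1(\mathbb{R}^3)$ already suffices, whereas the pointwise decay $u=O(|x|^{-2})$ is genuinely needed in the region $|y-x|<\frac{|x|}{2}$ (and to get $u^2\in L^1$ in the first place).
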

\begin{proof}
From the asymptotic property for $u$, there exists $R_0 \geqslant 1$ sufficiently large, for some constant $C \in \mathbb{R}$ such that
$$
u^2(x) \leqslant C|x|^{-4}, \quad \forall |x| \geqslant R_0.
$$
We only need to show
$$
\lim_{|x|\to\infty}{\int_{\mathbb{R}^3}{\frac{|x|^2-|x-y|^2}{|x-y|^2}u^2(y)\mathrm{d}y}} = 0.
$$
Indeed, for $|x|$ large enough, we have
$$
\begin{aligned}
& \left|\int_{\mathbb{R}^3}{\frac{|x|^2-|x-y|^2}{|x-y|^2}u^2(y)\mathrm{d}y}\right| \\
\leqslant & 3\int_{|y-x|<\frac{|x|}{2}}{\frac{C}{|x-y|^2|y|^2}\mathrm{d}y} + 3\int_{\{|y-x|\geqslant\frac{|x|}{2}\}\cap\{|y|\geqslant\frac{|x|}{2}\}}{u^2(y)\mathrm{d}y} \\
& \; + \frac{6}{|x|}\int_{\{|y-x|\geqslant\frac{|x|}{2}\}\cap\{|y|\leqslant\frac{|x|}{2}\}\cap\{|y|<R_0\}}{|y|u^2(y)\mathrm{d}y} \\
& \; + \frac{6}{|x|}\int_{\{|y-x|\geqslant\frac{|x|}{2}\} \cap \{R_0\leqslant|y|\leqslant\frac{|x|}{2}\}}{\frac{C}{|y|^3}\mathrm{d}y} \\
\leqslant & \frac{24C\pi}{|x|} + o_{|x|}(1) + \frac{6}{|x|}\int_{|y|<R_0}{|y|u^2(y)\mathrm{d}y} + \frac{24C\pi}{|x|}\ln\left(\frac{|x|}{2}\right) \\
= & o_{|x|}(1).
\end{aligned}
$$
This finishes our proof of Corollary \ref{corollary Asymptotic Property of P for n = 3}.
\end{proof}

From Lemmas \ref{lemma Integral Representation of u for n = 3} and \ref{lemma Integral Representation of v for n = 3}, we have proved that the classical solution $(u, v)$ of the system \eqref{system} solve the following integral system:
$$
\left\{
\begin{aligned}
& u(x) = \frac{1}{2\pi^2}\int_{\mathbb{R}^3}{\frac{e^{pv(y)}}{|x-y|^2}\mathrm{d}y}, \\
& v(x) = \frac{1}{2\pi^2}\int_{\mathbb{R}^3}{P_3(y)\ln{\left(\frac{|y|}{|x-y|}\right)}\mathrm{d}y} + \gamma,
\end{aligned}
\right.
$$
where $\gamma \in \mathbb{R}$. This completes our proof of Theorem \ref{theorem Integral Representation for n = 3}.

\subsubsection{Proof of Theorem \ref{theorem Integral Representation for n = 4}}

In this subsection, we will prove Theorem \ref{theorem Integral Representation for n = 4}.

\begin{lem}[Integral Representation of $u$]\label{lemma Integral Representation of u for n = 4}
Assume $n = 4$. Let $(u,v)$ be a pair of classical solutions to the system \eqref{system} such that $u \geqslant 0$ and satisfies \eqref{fm}. Then we have the integral representation
$$
u(x) = \frac{1}{4\pi^2}\int_{\mathbb{R}^4}{\frac{e^{pv(y)}}{|x-y|^3}\mathrm{d}y}.
$$
In addition, $u>0$ in $\mathbb{R}^4$, $u(x) \geqslant \frac{c}{|x|^3}$ for some constant $c>0$ and $|x|$ large enough, and
$$
\int_{\mathbb{R}^4}{\frac{e^{pv(y)}}{|y|^3}\mathrm{d}y} < +\infty.
$$
\end{lem}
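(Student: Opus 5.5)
I will follow the $n=3$ argument of Lemma \ref{lemma Integral Representation of u for n = 3} essentially verbatim, changing only the dimension-dependent constants. For $R>0$ set
$$
u_R(x)\triangleq\int_{B(0,R)}G_R(x,y)e^{pv(y)}\mathrm{d}y,
$$
where $G_R$ is the Green's function of $(-\Delta)^{\frac12}$ on $B(0,R)\subset\mathbb{R}^4$ from Lemma \ref{lemma Green Functions}. This means kernel $|x-y|^{-3}$, integrand $(1+z)^{-2}z^{-1/2}$, and the normalising constant chosen so that $G_R(x,y)\uparrow\frac{1}{4\pi^2}|x-y|^{-3}$ as $R\to\infty$. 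Since $e^{pv}$ is locally bounded and H\"older continuous, $u_R\in C(\mathbb{R}^4)\cap\mathcal{L}_1(\mathbb{R}^4)\cap C^{1,\varepsilon}_{loc}(B(0,R))$. It solves $(-\Delta)^{\frac12}u_R=e^{pv}$ in $B(0,R)$ with $u_R=0$ outside.

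Because $u\geqslant0$, the difference $u-u_R$ satisfies $(-\Delta)^{\frac12}(u-u_R)=0$ in $B(0,R)$ and $u-u_R\geqslant0$ outside $B(0,R)$. Lemma \ref{lemma Maximum Principle} then gives $u\geqslant u_R$. Letting $R\to\infty$ and using monotone convergence,
$$
u(x)\geqslant\overline{u}(x)\triangleq\frac{1}{4\pi^2}\int_{\mathbb{R}^4}\frac{e^{pv(y)}}{|x-y|^3}\mathrm{d}y .
$$
In particular $\overline{u}(0)<\infty$, so $|y|^{-3}e^{pv}\in L^1$. Since $\frac{1}{4\pi^2}|x|^{-3}$ is the fundamental solution of $(-\Delta)^{\frac12}$ in $\mathbb{R}^4$, we get $(-\Delta)^{\frac12}\overline{u}=e^{pv}$. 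Hence $u-\overline u\geqslant0$ is $\frac12$-harmonic, and Lemma \ref{lemma Liouville Theorem} gives $u=\overline u+C_u$ with $C_u\geqslant0$.

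If $C_u>0$, then $u\geqslant C_u$ everywhere. The double integral in \eqref{fm} then dominates $C_u^4\iint|x-y|^{-2}\,\mathrm{d}x\,\mathrm{d}y=+\infty$, which is a contradiction, so $C_u=0$. Positivity $u>0$ follows because $e^{pv}>0$. The lower bound at infinity follows as in dimension $3$. For $|x|$ large, restrict the integral to $1\leqslant|y|<|x|/2$. There $|x-y|\leqslant\frac32|x|$, so
$$
u(x)\geqslant\frac{1}{4\pi^2}\left(\frac{2}{3|x|}\right)^3\int_{1\leqslant|y|<|x|/2}e^{pv}\,\mathrm{d}y\geqslant\frac{c}{|x|^3}.
$$
The remaining integral is positive and nondecreasing in $|x|$, so the constant $c>0$ is uniform once $|x|\geqslant 4$.

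The main technical point is the justification that the Green's-function solution $u_R$ has the regularity and integrability required by Lemma \ref{lemma Maximum Principle}, and that the constants of Lemma \ref{lemma Green Functions} give exactly the limit kernel $\frac{1}{4\pi^2}|x-y|^{-3}$. The first is standard (as in the $n=3$ case), and the second is built into the stated value of $C_{4,1}$. Everything else is a direct transcription of the three-dimensional proof.
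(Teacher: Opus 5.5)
Your proposal is correct and follows the paper's proof step for step. Both arguments use the Green's function $u_R$ on $B(0,R)$, the maximum principle to get $u\geqslant u_R$, $R\to\infty$ plus the Liouville theorem to get $u=\overline u+C_u$, the finite mass condition \eqref{fm} to force $C_u=0$, and the region $1\leqslant|y|<|x|/2$ for the lower bound. Your version of the last step, bounding below by $\int_{1\leqslant|y|<2}e^{pv}\,\mathrm{d}y>0$, is slightly cleaner than the paper's, which passes through $\int_{|y|\geqslant1}e^{pv}|y|^{-2}\,\mathrm{d}y$ without having shown that this integral is finite.
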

\begin{proof}
For arbitrary $R>0$, let
$$
u_R(x) \triangleq \int_{B(0,R)}{G_R(x,y)e^{pv(y)}\mathrm{d}y},
$$
where Green's function for $(-\Delta)^\frac{1}{2}$ on $B(0,R) \subset \mathbb{R}^4$ is given by
$$
G_R(x,y) \triangleq \frac{C}{|x-y|^3}\int_{0}^{\frac{t_R}{d_R}}{\frac{1}{(1+z)^2\sqrt{z}}\mathrm{d}z}, \qquad x,y \in B(0,R),
$$
and
$$
\left\{
\begin{aligned}
d_R & = \frac{|x-y|^2}{R^2}, \\
t_R & = \left(1-\frac{|x|^2}{R^2}\right)\left(1-\frac{|y|^2}{R^2}\right), \\
C & = \frac{1}{4\pi^2}\left(\int_{0}^{+\infty}{\frac{1}{(1+z)^2\sqrt{z}}\mathrm{d}z}\right)^{-1}.
\end{aligned}
\right.
$$
We can deduce that $u_R \in C(\mathbb{R}^4) \cap \mathcal{L}_1(\mathbb{R}^4) \cap C^{1,\varepsilon}_{loc}(B(0,R))$ solves
$$
\left\{
\begin{aligned}
& (-\Delta)^\frac{1}{2}u_R = e^{pv} \qquad \text{in} \; B(0,R), \\
& u_R = 0 \qquad\qquad\quad\;\; \text{in} \; \mathbb{R}^4 \backslash B(0,R).
\end{aligned}
\right.
$$
Using Lemma \ref{lemma Maximum Principle}, for any $R > 0$, we have
$$
u \geqslant u_R .
$$
Taking $R\to\infty$, we get
$$
u(x) \geqslant \frac{1}{4\pi^2}\int_{\mathbb{R}^4}{\frac{e^{pv(y)}}{|x-y|^3}\mathrm{d}y} \triangleq \overline{u}(x).
$$
Then by Lemma \ref{lemma Liouville Theorem}, we get
$$
u(x) = \overline{u}(x) + C_u \geqslant 0.
$$
If $u(x) \geqslant C_u > 0$, then
$$
\iint_{\mathbb{R}^4\times\mathbb{R}^4}{\frac{u^2(x)u^2(y)}{|x-y|^2}\mathrm{d}x\mathrm{d}y} > \iint_{\mathbb{R}^4\times\mathbb{R}^4}{\frac{C_u^4}{|x-y|^2}\mathrm{d}x\mathrm{d}y} = +\infty,
$$
which is a contradiction. Thus $C_u$ must be zero, i.e.,
$$
u(x) = \frac{1}{4\pi^2}\int_{\mathbb{R}^4}{\frac{e^{pv(y)}}{|x-y|^3}\mathrm{d}y},
$$
and hence $u>0$ in $\mathbb{R}^4$, and
$$
\int_{\mathbb{R}^4}{\frac{e^{pv(y)}}{|y|^3}\mathrm{d}y}=4\pi^2 u(0)< +\infty.
$$
Moreover, For $x$ large enough,
$$
\begin{aligned}
& u(x) \\
\geqslant & \frac{1}{4\pi^2}\int_{\mathbb{R}^4}{\frac{e^{pv(y)}}{|x-y|^3}\mathrm{d}y} \\
\geqslant & \frac{1}{4\pi^2}\int_{1 \leqslant |y| < \frac{|x|}{2}}{\frac{e^{pv(y)}}{|x-y|^3}\mathrm{d}y} \\
\geqslant & \frac{2}{27\pi^2|x|^3}\int_{1 \leqslant |y| < \frac{|x|}{2}}{\frac{e^{pv(y)}}{|y|^2}\mathrm{d}y} \\
\geqslant & \frac{1}{27\pi^2|x|^3}\int_{|y| \geqslant 1}{\frac{e^{pv(y)}}{|y|^2}\mathrm{d}y} \\
\geqslant & \frac{C}{|x|^3}.
\end{aligned}
$$
\end{proof}

Since we have \eqref{fm}, we can define
$$
\zeta(x) \triangleq \frac{1}{8\pi^2}\int_{\mathbb{R}^4}{P_4(y)\ln{\left(\frac{|y|}{|x-y|}\right)}\mathrm{d}y}.
$$

Next, we need to prove the integral representation formula and the asymptotic property of $v$. Before that, we need the following lemmas.

\begin{lem}\label{lemma Lowerbound of xi(x) for n = 4}
Assume $n = 4$. Let $(u,v)$ be a pair of classical solutions to the system \eqref{system} such that $u \geqslant 0$ and satisfies \eqref{fm}. Then there is a constant $C$ such that when $|x|$ is sufficiently large, one has
$$
-\zeta(x) \leqslant \alpha\ln{|x|}+C.
$$
where $\alpha \triangleq \frac{1}{8\pi^2}\int_{\mathbb{R}^4}{P_4(y)\mathrm{d}y} \in (0,+\infty)$.
\end{lem}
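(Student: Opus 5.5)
The plan is to copy the argument of Lemma \ref{lemma Lowerbound of xi(x) for n = 3} essentially verbatim, replacing the constant $\frac{1}{2\pi^2}$ by $\frac{1}{8\pi^2}$ and $P_3$ by $P_4$. Since the sign of the logarithm does not depend on the dimension, the geometry of the splitting carries over unchanged.

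First, I will check that $\alpha$ is finite and positive. Finiteness is exactly the finite total mass condition \eqref{fm}. Positivity holds because Lemma \ref{lemma Integral Representation of u for n = 4} gives $u>0$, so $P_4>0$.

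Next, I fix $|x|$ large and split $\mathbb{R}^4=S_1\cup S_2$, where
$$
S_1=\left\{y\,\Big|\,|x-y|\leqslant\tfrac{|x|}{2}\right\},\qquad S_2=\left\{y\,\Big|\,|x-y|\geqslant\tfrac{|x|}{2}\right\}.
$$
I then bound the integrand of
$$
-\zeta(x)=\frac{1}{8\pi^2}\int_{\mathbb{R}^4}P_4(y)\ln\left(\frac{|x-y|}{|y|}\right)\mathrm{d}y
$$
region by region.
\begin{itemize}
\item On $S_1$ we have $|y|\geqslant\frac{|x|}{2}\geqslant|x-y|$, so the logarithm is nonpositive and this region can be dropped.
\item On $S_2\cap\{|y|\geqslant2\}$ we have $|x-y|\leqslant|x|+|y|\leqslant|x||y|$ once $|x|\geqslant2$, so $\ln\frac{|x-y|}{|y|}\leqslant\ln|x|$.
\item On $S_2\cap\{|y|\leqslant2\}$ we have $|x-y|\leqslant|x|+2$.
\end{itemize}

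The only point needing a little care is the third region, $S_2\cap\{|y|\leqslant2\}$. The paper's $n=3$ argument asserts that the logarithm is nonpositive there, which is not literally true for small $|y|$. I would instead split $\ln\frac{|x-y|}{|y|}=\ln|x-y|+\ln\frac{1}{|y|}$. The first term is at most $\ln(|x|+2)\leqslant\ln|x|+\ln 2$. The second contributes at most the constant
$$
\frac{1}{8\pi^2}\int_{|y|<1}P_4(y)\ln\frac{1}{|y|}\,\mathrm{d}y,
$$
which is finite because $P_4$ is continuous, hence bounded, on $B(0,1)$. (Here $P_4$ is continuous since $u$ is continuous and $\frac{1}{|\cdot|^2}*u^2$ is locally bounded; this follows from \eqref{fm} together with the continuity of $u$.)

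Finally, I add the contributions. The $\ln|x|$ coefficients come from the disjoint sets $S_2\cap\{|y|\geqslant2\}$ and $S_2\cap\{|y|<2\}$, and their total is at most $\frac{1}{8\pi^2}\int_{\mathbb{R}^4}P_4=\alpha$. This yields
$$
-\zeta(x)\leqslant\alpha\ln|x|+C
$$
for $|x|$ large, which is the claim.
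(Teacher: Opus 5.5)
Your proposal is correct and uses the paper's splitting into $S_1$, $S_2\cap\{|y|\geqslant2\}$ and $S_2\cap\{|y|\leqslant2\}$. Your treatment of the last region is the right one: the paper's claim that $\ln\frac{|x-y|}{|y|}\leqslant0$ there is false, and the estimate closes only because, as you observe, that region contributes at most $\frac{\ln|x|+\ln2}{8\pi^2}\int_{\{|y|\leqslant2\}}P_4(y)\,\mathrm{d}y$ plus the finite constant $\frac{1}{8\pi^2}\int_{\{|y|<1\}}P_4(y)\ln\frac{1}{|y|}\,\mathrm{d}y$, so the total coefficient of $\ln|x|$ is still at most $\alpha$. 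The local boundedness of $P_4$ that you invoke does follow from the finite total mass condition, $u>0$ and the local integrability of $|z|^{-2}$ in $\mathbb{R}^4$, and it is needed anyway for $\zeta$ to be well defined.
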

\begin{proof}
Fix $x$ such that $|x|$ large enough, let us denote
$$
\begin{aligned}
& S_1 = \left\{y \left| |x-y| \leqslant \frac{|x|}{2} \right.\right\}, \\
& S_2 = \left\{y \left| |x-y| \geqslant \frac{|x|}{2} \right.\right\}.
\end{aligned}
$$
It is easy to check that $\mathbb{R}^4 = S_1 \cup S_2$ and for $y \in S_1$, one has $|y| \geqslant |x|-|x-y| \geqslant \frac{|x|}{2} \geqslant |x-y|$, which indicates $\ln{\left(\frac{|x-y|}{|y|}\right)} \leqslant 0$. For $y \in S_2$ and $|y| \geqslant 2$, one has $|x-y| \leqslant |x|+|y| \leqslant |x||y|$ (note that $|x|$ is sufficiently large). Finally, for $y \in S_2$ and $|y| \leqslant 2$, one has $|x-y| \leqslant |x|$ (note that $|x|$ is sufficiently large.), which also indicates $\ln{\left(\frac{|x-y|}{|y|}\right)} \leqslant 0$.
Therefore,
$$
\begin{aligned}
-\zeta(x) & = \frac{1}{8\pi^2}\int_{\mathbb{R}^4}{P_4(y)\ln{\left(\frac{|x-y|}{|y|}\right)}\mathrm{d}y} \\
& \leqslant \frac{1}{8\pi^2}\int_{|x-y| \geqslant \frac{|x|}{2}}{P_4(y)\ln{\left(\frac{|x-y|}{|y|}\right)}\mathrm{d}y} \\
& \leqslant \frac{\ln{|x|}}{8\pi^2}\int_{\{|x-y| \geqslant \frac{|x|}{2}\} \cap \{|y| \geqslant 2\}}{P_4(y)\mathrm{d}y} + C \\
& \leqslant \alpha\ln{|x|}+C.
\end{aligned}
$$
\end{proof}

Now we can prove the integral representation for $\Delta{v}$.

\begin{lem}\label{lemma Integral Representation of Delta v for n = 4}
Assume $n = 4$. Let $(u,v)$ be a pair of classical solutions to the system \eqref{system} such that $u \geqslant 0$ and satisfies \eqref{fm}. Then
$$
\Delta{v}(x) = -\frac{1}{4\pi^2}\int_{\mathbb{R}^4}{\frac{P_4(y)}{|x-y|^2}\mathrm{d}y}-C
$$
for some constant $C \geqslant 0$.
\end{lem}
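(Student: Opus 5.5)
I will mirror the proof of Lemma \ref{lemma Integral Representation of Delta v for n = 3}, now in $\mathbb{R}^4$, where $(-\Delta)^{2}v=\Delta^2 v$ is a genuine local operator.

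\textbf{Step 1: the difference $m$.} Set $m(x):=v(x)-\zeta(x)$. Since $\frac{1}{8\pi^2}\ln\frac{1}{|x|}$ is the fundamental solution of $\Delta^2$ in $\mathbb{R}^4$, and $P_4\in L^1(\mathbb{R}^4)$ by \eqref{fm}, we have
$$
\Delta\zeta(x)=-\frac{1}{4\pi^2}\int_{\mathbb{R}^4}\frac{P_4(y)}{|x-y|^2}\mathrm{d}y,\qquad \Delta^2\zeta=P_4 .
$$
Hence $\Delta^2 m=0$, so $\Delta m$ is harmonic in $\mathbb{R}^4$.

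\textbf{Step 2: a spherical-mean identity.} For any $x_0$ and $r>0$, the mean value property gives
$$
\Delta m(x_0)=\frac{2}{\pi^2r^4}\int_{B(x_0,r)}\Delta m=\frac{2}{\pi^2r^4}\int_{\partial B(x_0,r)}\partial_r m\,\mathrm{d}\sigma .
$$
Since $|\partial B(x_0,r)|=2\pi^2r^3$, this says
$$
\frac{\mathrm{d}}{\mathrm{d}r}\fint_{\partial B(x_0,r)}m\,\mathrm{d}\sigma=\frac{r}{4}\Delta m(x_0).
$$
Integrating from $0$ to $r$ yields
$$
\frac{r^2}{8}\Delta m(x_0)=\fint_{\partial B(x_0,r)}m\,\mathrm{d}\sigma-m(x_0).
$$

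\textbf{Step 3: sign of $\Delta m$.} Multiply the identity by $p$, exponentiate, and apply Jensen:
$$
e^{\frac{pr^2}{8}\Delta m(x_0)}\le e^{-pm(x_0)}\fint_{\partial B(x_0,r)}e^{pm}\,\mathrm{d}\sigma .
$$
By Lemma \ref{lemma Lowerbound of xi(x) for n = 4}, $m\le v+\alpha\ln|x|+C$ for $|x|$ large, so $e^{pm}\le C|x|^{p\alpha}e^{pv}$. Now multiply by $r^{3}\cdot r^{-3-p\alpha}=r^{-p\alpha}$ and integrate over $r\in[R_1,\infty)$. The right side becomes, up to constants, $\int_{|y-x_0|\ge R_1}|y|^{-3}e^{pv(y)}\,\mathrm{d}y$, after using $|y|\sim r$ for large $r$. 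This is finite by Lemma \ref{lemma Integral Representation of u for n = 4}. Therefore
$$
r^{-p\alpha}e^{\frac{pr^2}{8}\Delta m(x_0)}\in L^1_r([R_1,\infty)),
$$
which forces $\Delta m(x_0)\le0$. Since $x_0$ is arbitrary, $\Delta m\le0$ on all of $\mathbb{R}^4$.

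\textbf{Step 4: conclusion.} The function $-\Delta m$ is harmonic and nonnegative on $\mathbb{R}^4$, so by the classical Liouville theorem $\Delta m\equiv -C$ for some constant $C\ge0$. Adding back $\Delta\zeta$ gives the stated formula for $\Delta v$.

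The main obstacle is Step 3: correctly matching the powers $r^{3}$ (surface measure) against the weight $|y|^{-3}$ (from the integrability of $e^{pv}$), while absorbing the $|x|^{p\alpha}$ growth coming from $-\zeta$. A secondary technical point is justifying the differentiation of $\zeta$ under the integral sign, using $P_4\in L^1$ together with its local boundedness.
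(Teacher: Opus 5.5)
Your proposal is correct and follows essentially the same route as the paper. You set $m=v-\zeta$ and use the mean value property of the harmonic function $\Delta m$ to express $\frac{r^2}{8}\Delta m(x_0)$ as the spherical mean of $m$ minus $m(x_0)$. You then apply Jensen together with Lemma \ref{lemma Lowerbound of xi(x) for n = 4} and $|x|^{-3}e^{pv}\in L^1(\mathbb{R}^4)$ to force $\Delta m\le 0$, and conclude by Liouville. Your version is slightly cleaner than the paper's: you insert the factor $p$ before exponentiating, and you integrate over $[R_1,\infty)$ so that the bound on $-\zeta$, which holds only for $|x|$ large, applies on the whole sphere.
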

\begin{proof}
Denote $m(x) = v(x)-\zeta(x)$, from \eqref{system}, we have
$$
\Delta^2{m} = 0,
$$
i.e., $\Delta{m}$ is harmonic in $\mathbb{R}^4$. From the mean value
theorem of harmonic function, for any $x_0 \in \mathbb{R}^3$ and $r > 0$, we derive
$$
\begin{aligned}
\Delta{m}(x_0) & = \frac{2}{\pi^2r^4}\int_{B(x_0,r)}{\Delta{m}(y)\mathrm{d}y} \\
& = \frac{2}{\pi^2r^4}\int_{\partial B(x_0,r)}{\frac{\partial m}{\partial r}\mathrm{d}\sigma}.
\end{aligned}
$$
Integrating from $0$ to $r$, we obtain
$$
\frac{r^2}{8}\Delta{m}(x_0) = \fint_{\partial B(x_0,r)}{m(y)\mathrm{d}\sigma} - m(x_0).
$$
Using the Jensen's inequality, we have
$$
\begin{aligned}
e^{\frac{r^2}{8}\Delta{m}(x_0)} & = e^{-m(x_0)}e^{\fint_{\partial B(x_0,r)}{m(y)\mathrm{d}\sigma}} \\
& \leqslant e^{-m(x_0)}\fint_{\partial B(x_0,r)}{e^{m(y)}\mathrm{d}\sigma}.
\end{aligned}
$$
By Lemma \ref{lemma Lowerbound of xi(x) for n = 4}, we have $m(x) = v(x) - \zeta(x) \leqslant v(x) + \alpha\ln{|x|}+C$. Combining this with the above inequality and $|x|^{-3}e^{pv(x)} \in L^1(\mathbb{R}^4)$ in Lemma \ref{lemma Integral Representation of u for n = 4}, we get
$$
r^{-p\alpha}e^{\frac{pr^2}{8}\Delta{m}(x_0)} \in L^1_r([1,+\infty)).
$$
Thus $\Delta{m}(x_0) \leqslant 0$ for all $x_0 \in \mathbb{R}^4$. By Liouville Theorem, we have $\Delta{m}(x) = -C$ in $\mathbb{R}^4$ for some constant $C \geqslant 0$. Together with $\Delta{\zeta}(x) = -\frac{1}{4\pi^2}\int_{\mathbb{R}^4}{\frac{P_4(y)}{|x-y|^2}\mathrm{d}y}$, we get $\Delta{v}(x) = -\frac{1}{4\pi^2}\int_{\mathbb{R}^4}{\frac{P_4(y)}{|x-y|^2}\mathrm{d}y} - C$.
\end{proof}

Now we need to assume $v(x) = o(|x|^2)$ at $\infty$, and we can get the precise integral representation formula for $\Delta{v}(x)$.

\begin{lem}\label{lemma Precise Integral Representation of Delta v for n = 4}
Assume $n = 4$. Let $(u,v)$ be a pair of classical solutions to the system \eqref{system} such that $u \geqslant 0$ satisfying \eqref{fm}, and $v(x) = o(|x|^2)$ at $\infty$. Then we have
$$
\Delta{v}(x) = -\frac{1}{4\pi^2}\int_{\mathbb{R}^4}{\frac{P_4(y)}{|x-y|^2}\mathrm{d}y}.
$$
\end{lem}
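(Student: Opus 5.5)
We follow the argument used for Lemma \ref{lemma Precise Integral Representation of Delta v for n = 3}, adapted to $\mathbb{R}^4$. By Lemma \ref{lemma Integral Representation of Delta v for n = 4},
$$
\Delta v(x)=-\frac{1}{4\pi^2}\int_{\mathbb{R}^4}\frac{P_4(y)}{|x-y|^2}\mathrm{d}y-C
$$
for some constant $C\geqslant0$. It therefore suffices to show $C=0$, and we argue by contradiction. Suppose $C>0$. Since $P_4\geqslant0$, we get $\Delta v\leqslant -C<0$ on all of $\mathbb{R}^4$. The plan is to build an auxiliary function that is superharmonic outside a large ball, blows up at infinity, and is very negative on a sphere. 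Such a function attains an interior minimum, which contradicts the strong maximum principle.

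Concretely, fix $\varepsilon\in(0,\frac{C}{16})$. Because $v(x)=o(|x|^2)$, we may choose $R_0$ so large that $v(y)+\varepsilon|y|^2\geqslant\frac{\varepsilon}{2}|y|^2$ for $|y|\geqslant R_0$. For $B>0$ define
$$
n(y)=v(y)+\varepsilon|y|^2+B\left(|y|^{-2}-R_0^{-2}\right),
$$
using the fundamental solution $|y|^{-2}$, which is harmonic in $\mathbb{R}^4\setminus\{0\}$. Since $\Delta|y|^2=8$ in $\mathbb{R}^4$, we obtain
$$
\Delta n=\Delta v+8\varepsilon\leqslant -C+\frac{C}{2}<0 \quad\text{for } |y|>R_0.
$$

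Next we locate the minimum. On the sphere $|y|=R_0$ we have $n(y)=v(y)+\varepsilon R_0^2$, which is independent of $B$. As $|y|\to\infty$, $n(y)\geqslant\frac{\varepsilon}{2}|y|^2-BR_0^{-2}\to+\infty$ for every fixed $B$. Fix any point $y_1$ with $|y_1|=2R_0$. Then $n(y_1)=v(y_1)+4\varepsilon R_0^2-\frac{3B}{4R_0^2}$, which tends to $-\infty$ as $B\to\infty$. Choose $B$ so large that $n(y_1)<\min_{|y|=R_0}n$. Then $\inf_{|y|\geqslant R_0}n$ is attained at some $y_0$ with $|y_0|>R_0$, where $\Delta n(y_0)\geqslant0$. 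This contradicts $\Delta n(y_0)<0$, so $C=0$.

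The only delicate point is bookkeeping: the dimension-dependent constant $\Delta|y|^2=2n=8$ forces the smaller choice $\varepsilon<C/16$, and the correct harmonic correction in $\mathbb{R}^4$ is $|y|^{-2}$. We expect no real obstacle, since the argument uses only $v=o(|x|^2)$ and the strict sign of $\Delta v$ obtained from $C>0$.
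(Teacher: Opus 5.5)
Your proposal is correct and follows the paper's proof essentially verbatim. You use the same auxiliary function $n(y)=v(y)+\varepsilon|y|^2+B(|y|^{-2}-R_0^{-2})$ with $\varepsilon<C/16$, the same superharmonicity $\Delta n<-C/2$ outside $B(0,R_0)$, and the same interior-minimum contradiction; your check that the minimizer is interior (the values on $|y|=R_0$ do not depend on $B$, while $n(y_1)\to-\infty$ at $|y_1|=2R_0$) spells out a step the paper only asserts.
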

\begin{proof}
By Lemma \ref{lemma Integral Representation of Delta v for n = 4}, we have
$$
\Delta{v}(x) = -\frac{1}{4\pi^2}\int_{\mathbb{R}^4}{\frac{P_4(y)}{|x-y|^2}\mathrm{d}y} - C.
$$
If $C > 0$, take $\varepsilon \in (0,\frac{C}{16})$ and $R_0$ large enough such that
$$
\begin{aligned}
& \Delta{v}(x) \leqslant -C < 0 \;\text{in}\; \mathbb{R}^4, \\
& v(y) + \varepsilon|y|^2 \geqslant \frac{\varepsilon}{2}|y|^2 \geqslant 0 \quad \text{for}\; |y| \geqslant R_0.
\end{aligned}
$$
For $B > 0$, define
$$
n(y) = v(y) + \varepsilon|y|^2+B(|y|^{-2}-R_0^{-2}).
$$
Thus
$$
\Delta{n}(y) = \Delta{v}(y) + 8\varepsilon < -\frac{C}{2} < 0
$$
for $|y| \geqslant R_0$. Furthermore, we get
$$
\begin{aligned}
& \lim_{|y|\to+\infty}{n(y)} = +\infty \;\text{for any}\; B > 0, \\
& \lim_{B\to+\infty}{n(y)} = -\infty \;\text{for any}\; y \in \mathbb{R}^4 \backslash \overline{B(0,R_0)}.
\end{aligned}
$$
Thus, we can take $B$ sufficiently large such that
$\displaystyle \inf_{|y| \geqslant R_0}{n(y)}$ is attained by some $y_0 \in \mathbb{R}^4$ with $|y_0| > R_0$. By applying the maximum principle to $n(y)$, we derive a contradiction. This implies $C = 0$.
\end{proof}

Finally, we can prove the integral representation formula and the asymptotic property for $v$.
\begin{lem}[Integral Representation of $v$]\label{lemma Integral Representation of v for n = 4}
Assume $n=4$ and $p>0$. Let $(u,v)$ be a pair of classical solutions to the system \eqref{system} such that $u \geqslant 0$, $v(x)=o(|x|^2)$ as $|x|\rightarrow+\infty$ and $u$ satisfies the finite total mass condition \eqref{fm}. Suppose $u(x) = O(|x|^K)$ as $|x|\rightarrow+\infty$ for some $K \gg 1$ arbitrarily large. Then we have
$$
v(x) = \zeta(x)+\gamma = \frac{1}{8\pi^2}\int_{\mathbb{R}^4}{P_4(y)\ln{\left(\frac{|y|}{|x-y|}\right)}\mathrm{d}y}+\gamma,
$$
where $\gamma \in \mathbb{R}$ is a constant. Moreover,
$$
\lim_{|x|\to\infty}{\frac{v(x)}{\ln{|x|}}} = -\alpha,
$$
where $\alpha \triangleq \frac{1}{8\pi^2}\int_{\mathbb{R}^4}{P_4(y)\mathrm{d}y} \in (0,+\infty)$.
\end{lem}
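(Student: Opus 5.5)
The proof will follow the template of Lemma \ref{lemma Integral Representation of v for n = 3}, adapted to dimension four. The first and main step is the asymptotic property
$$
\lim_{|x|\to\infty}\frac{\zeta(x)}{\ln|x|}=-\alpha, \qquad \alpha=\frac{1}{8\pi^2}\int_{\mathbb{R}^4}P_4(y)\,\mathrm{d}y .
$$
It suffices to show that
$$
\int_{\mathbb{R}^4}P_4(y)\,\frac{\ln|x-y|-\ln|y|-\ln|x|}{\ln|x|}\,\mathrm{d}y\to0 \qquad \text{as } |x|\to\infty .
$$
I will split $\mathbb{R}^4$ into three regions: $B(x,1)$, the ball $\{|y|<\ln|x|\}$, and the remaining set $\{|y-x|\geqslant1,\ |y|\geqslant\ln|x|\}$.

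On $B(x,1)$ the only difficulty is the logarithmic singularity. I will apply Lemma \ref{lemma expL+LlnL Inequality} with $f=\ln\frac{1}{|x-y|}$, so that $e^{|f|}-|f|-1\leqslant |x-y|^{-1}$, which is integrable on the unit ball of $\mathbb{R}^4$. For $g=P_4$ I need $\ln(P_4+1)\lesssim \ln|x|$ on $B(x,1)$. This follows from $u=O(|x|^K)$: we have $P_4(y)=u^2(y)\bigl(\frac{1}{|\cdot|^2}*u^2\bigr)(y)$, and the convolution factor is bounded by a polynomial in $|y|$. To see this, split it into a local part, controlled by $\sup_{B(y,1)}u^2$, and a far part, controlled by $\int u^2(z)|y-z|^{-2}\,\mathrm{d}z$ over $|z-y|\geqslant1$. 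The far part is finite because \eqref{fm} forces $u^2\in L^1$ with respect to a suitable weight; near a fixed point $y_0$ with $u(y_0)>0$ one has $\int u^2(z)|z-y_0|^{-2}\,\mathrm{d}z<\infty$, and comparing $|y-z|$ with $|y_0-z|$ gives polynomial growth in $|y|$. Hence $\ln(P_4+1)=O(\ln|x|)$ on $B(x,1)$. Since $\int_{B(x,1)}P_4\to0$, the contribution of this region is $o(1)$.

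The ball $\{|y|<\ln|x|\}$ contributes $O\bigl(\ln\ln|x|/\ln|x|\bigr)$ plus $\frac{1}{\ln|x|}\int_{|y|<1}\ln\frac{1}{|y|}P_4$. This last integral is finite because $P_4$ is continuous near the origin. On the remaining region the integrand ratio is bounded, using $\frac{1}{2|x|^2}\leqslant\frac{|x-y|}{|x||y|}\leqslant1$, and the tail $\int_{|y|\geqslant\ln|x|}P_4\to0$. This proves the asymptotic for $\zeta$.

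Next, Lemma \ref{lemma Precise Integral Representation of Delta v for n = 4} together with $\Delta\zeta=-\frac{1}{4\pi^2}\int\frac{P_4(y)}{|x-y|^2}\,\mathrm{d}y$ (the fundamental-solution computation in $\mathbb{R}^4$) gives $\Delta(v-\zeta)=0$. Since $v=o(|x|^2)$ and $\zeta=O(\ln|x|)$, Lemma \ref{lemma Classification of Harmonic Functions}(ii) yields
$$
v-\zeta=a_0+\sum_{i=1}^4 a_ix_i .
$$
Lemma \ref{lemma Integral Representation of u for n = 4} gives $|x|^{-3}e^{pv}\in L^1(\mathbb{R}^4)$. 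Since $e^{p\zeta}=|x|^{-p\alpha+o(1)}$, if some $a_i\neq0$ then the factor $e^{p\,a\cdot x}$ grows exponentially in a cone, and the integral diverges. Hence $a_1=\dots=a_4=0$, and $v=\zeta+\gamma$ with $\gamma=a_0$. Finally, $\lim_{|x|\to\infty} v/\ln|x|=-\alpha$ follows at once from the asymptotic for $\zeta$.

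The main obstacle is the $B(x,1)$ estimate, specifically justifying the polynomial bound on $P_4$ near $x$. If the convolution factor proves awkward to control, I would fall back on bounding it through its local part plus a term that is uniformly bounded, as is done in the $n=3$ case.
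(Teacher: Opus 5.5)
Your proposal is correct and follows essentially the same route as the paper. The paper likewise uses the three-region splitting with the $\exp^L+L\ln L$ inequality on $B(x,1)$ to get $\zeta(x)/\ln|x|\to-\alpha$, then $\Delta(v-\zeta)=0$, Lemma \ref{lemma Classification of Harmonic Functions}(ii), and the integrability of $|x|^{-3}e^{pv}$ to remove the linear part. Your polynomial bound on the convolution factor $\frac{1}{|\cdot|^2}*u^2$ fills in a step the paper only asserts as ``$O(2K\ln|x|)$''; note that \eqref{fm} gives finiteness only at almost every point of a ball where $u>0$, so $y_0$ must be one of those points before you compare $|y-z|$ with $|y_0-z|$ for $|y-z|\geqslant1$.
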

\begin{proof}
We will first prove the following asymptotic property:
$$
\lim_{|x|\to\infty}{\frac{\zeta(x)}{\ln{|x|}}} = -\alpha.
$$
To this end, we only need to show that
$$
\lim_{|x|\to\infty}{\int_{\mathbb{R}^4}{P_4(y)\frac{\ln{|x-y|}-\ln{|y|}-\ln{|x|}}{\ln{|x|}}\mathrm{d}y}} = 0.
$$
By using the $\exp^L+L\ln{L}$ inequality, we get
$$
\begin{aligned}
& \int_{B(x,1)}{P_4(y)\ln\left(\frac{1}{|x-y|}\right)\mathrm{d}y} \\
\leqslant & \int_{B(x,1)}{\frac{1}{|x-y|}\mathrm{d}y}+\int_{B(x,1)}{P_4(y)\ln\left(P_4(y)+1\right)\mathrm{d}y} \\
\leqslant & \frac{2\pi^2}{3}+\left[\max_{|y-x|\leqslant1}\ln\left(P_4(y)+1\right)\right]\int_{B(x,1)}{P_4(y)\mathrm{d}y}.
\end{aligned}
$$
As a consequence, by above inequality, the conditions \eqref{fm} and $u = O(|x|^K) $ at $\infty$ for some $K \gg 1$ arbitrarily large, we have, for any $|x| \geqslant e^2$ large enough,
$$
\begin{aligned}
& \left|\int_{\mathbb{R}^4}{P_4(y)\frac{\ln{|x-y|}-\ln{|y|}-\ln{|x|}}{\ln{|x|}}\mathrm{d}y}\right| \\
\leqslant & 3\int_{B(x,1)}{P_4(y)\mathrm{d}y}+\frac{2\pi^2}{3\ln{|x|}}+\frac{O(2K\ln{|x|})}{\ln{|x|}}\int_{B(x,1)}{P_4(y)\mathrm{d}y} \\
& \; + \frac{\displaystyle \max_{|y|\leqslant\ln{|x|}}{\left|\ln{\frac{|x-y|}{|x|}}\right|}}{\ln{|x|}}\int_{|y|<\ln{|x|}}{P_4(y)\mathrm{d}y}+\frac{1}{\ln{|x|}}\int_{|y|<\ln{|x|}}{|\ln{|y|}|P_4(y)\mathrm{d}y} \\
& \; + \sup_{\substack{|y-x|\geqslant1 \\ |y|\geqslant\ln{|x|}}}{\frac{|\ln{|x-y|}-\ln{|y|}-\ln{|x|}|}{\ln{|x|}}}\int_{|y|\geqslant\ln{|x|}}{P_4(y)\mathrm{d}y} \\
\leqslant & o_{|x|}(1)+\frac{2\pi^2}{3\ln{|x|}}+\frac{\ln{2}}{\ln{|x|}}\int_{\mathbb{R}^4}{P_4(y)\mathrm{d}y}+\frac{1}{\ln{|x|}}\int_{|y|<1}{\ln\left(\frac{1}{|y|}\right)P_4(y)\mathrm{d}y} \\
& \; + \frac{\ln(\ln{|x|})}{\ln{|x|}}\int_{\mathbb{R}^4}{P_4(y)\mathrm{d}y}+\left(2+\frac{\ln{2}}{\ln{|x|}}\right)\int_{|y|\geqslant\ln{|x|}}{P_4(y)\mathrm{d}y} \\
= & o_{|x|}(1),
\end{aligned}
$$
where we have used the facts $1 > \frac{1}{|x|}+\frac{1}{|y|} \geqslant \frac{|x-y|}{|x|\cdot|y|} \geqslant \frac{1}{2|x|^2}$ for any $|y-x| \geqslant 1$ and $|y| \geqslant \ln{|x|}$. By letting $|x| \to +\infty$, we obtain
$$
\lim_{|x|\to\infty}{\int_{\mathbb{R}^4}{P_4(y)\frac{\ln{|x-y|}-\ln{|y|}-\ln{|x|}}{\ln{|x|}}\mathrm{d}y}} = 0.
$$
By Lemma \ref{lemma Precise Integral Representation of Delta v for n = 3} and $\Delta{\zeta}(x) = -\frac{1}{8\pi^2}\int_{\mathbb{R}^4}{\frac{P_4(y)}{|x-y|^3}\mathrm{d}y}$, we obtain $\Delta(v-\zeta) = 0$. From the asymptotic property of $\zeta(x)$ and $v(x) = o(|x|^2)$ at $\infty$, we can immediately derive from Lemma \ref{lemma Classification of Harmonic Functions} (ii) that, for some constants $a_i \in \mathbb{R}$ ($i = 0, 1, 2, 3, 4$),
$$
v(x) - \zeta(x) = \sum_{i = 1}^4{a_ix_i} + a_0, \qquad \forall x \in \mathbb{R}^4.
$$
Since Lemma \ref{lemma Integral Representation of u for n = 3} implies $|x|^{-3}e^{pv(x)} = |x|^{-2}e^{p\zeta(x)}e^{pa_0}e^{p\sum_{i = 1}^4{a_ix_i}} \in L^1(\mathbb{R}^4)$, we infer from the asymptotic
property of $\zeta(x)$ that $a_1 = a_2 = a_3 = a_4 = 0$. Hence the integral representation formula for $v$
holds. The asymptotic property of $v$ follows immediately.
\end{proof}

As a consequence of Lemma \ref{lemma Integral Representation of v for n = 4}, we have the following Lemma.

\begin{lem}\label{lemma Asymptotic Property for n = 4}
Assume $n=4$ and $p>0$. Let $(u,v)$ be a pair of classical solutions to the system \eqref{system} such that $u \geqslant 0$, $v(x)=o(|x|^2)$ as $|x|\rightarrow+\infty$ and $u$ satisfies the finite total mass condition \eqref{fm}. Suppose $u(x) = O(|x|^K)$ as $|x|\rightarrow+\infty$ for some $K \gg 1$ arbitrarily large. Then we have, for arbitrarily small $\delta > 0$,
$$
\left\{
\begin{aligned}
\lim_{|x|\to\infty}{\frac{e^{pv(x)}}{|x|^{-\alpha p - \delta}}} & = +\infty, \\
\lim_{|x|\to\infty}{\frac{e^{pv(x)}}{|x|^{-\alpha p + \delta}}} & = 0.
\end{aligned}
\right.
$$
Consequently, $\alpha = \frac{1}{8\pi^2}\int_{\mathbb{R}^4}{P_4(y)\mathrm{d}y} \geqslant \frac{1}{p}$. Furthermore, if $\alpha > \frac{4}{p}$, then
$$
\lim_{|x|\to\infty}{|x|^3u(x)} = \beta,
$$
where
$$
\beta= \frac{1}{4\pi^2}\int_{\mathbb{R}^4}{e^{pv(x)}\mathrm{d}x}.
$$
\end{lem}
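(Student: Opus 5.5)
The plan follows the $n=3$ argument of Lemma \ref{lemma Asymptotic Property for n = 3}, with the exponents adjusted to dimension four. The first step uses Lemma \ref{lemma Integral Representation of v for n = 4}, which gives $v(x)=-\alpha\ln|x|+o(\ln|x|)$. Hence $e^{pv(x)}=|x|^{-\alpha p}e^{o(\ln|x|)}$ as $|x|\to\infty$, and for any fixed $\delta>0$ the factor $e^{o(\ln|x|)}$ eventually lies between $|x|^{-\delta/2}$ and $|x|^{\delta/2}$. This yields both limits at once.

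For the bound $\alpha\geqslant\frac1p$, I argue by contradiction using Lemma \ref{lemma Integral Representation of u for n = 4}, which gives $\int_{\mathbb{R}^4}|y|^{-3}e^{pv(y)}\,\mathrm{d}y<+\infty$. Suppose $\alpha p<1$. Choose $\delta=\frac{1-\alpha p}{2}$, so that by the first limit $e^{pv(y)}\geqslant|y|^{-\alpha p-\delta}$ for $|y|$ large. Then $|y|^{-3}e^{pv(y)}\geqslant|y|^{-3-\alpha p-\delta}$, and the exponent $3+\alpha p+\delta$ is strictly less than $4$. This is not integrable at infinity in $\mathbb{R}^4$, a contradiction.

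Now assume $\alpha p>4$ and set $\delta=\frac{\alpha p-4}{2}$. Choose $R_0\geqslant1$ with $e^{pv(x)}\leqslant|x|^{-\frac{\alpha p+4}{2}}$ for $|x|\geqslant R_0$. Since $\frac{\alpha p+4}{2}>4$ and $v$ is continuous, this gives $\beta<+\infty$. By the integral representation of $u$, it suffices to show
$$\int_{\mathbb{R}^4}\frac{|x|^3-|x-y|^3}{|x-y|^3}e^{pv(y)}\,\mathrm{d}y\to0 \quad\text{as } |x|\to\infty.$$
I split the domain into three regions.

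\begin{itemize}
\item[(i)] On $|y-x|<\frac{|x|}{2}$ we have $|y|\sim|x|$, so the integrand is at most $C|x|^3|x-y|^{-3}|x|^{-\frac{\alpha p+4}{2}}$. Integrating over the ball $B(x,|x|/2)$ gives a bound $C|x|^{4-\frac{\alpha p+4}{2}}=C|x|^{-\delta}$, which tends to $0$.
\item[(ii)] On $\{|y-x|\geqslant\frac{|x|}{2}\}\cap\{|y|\geqslant\frac{|x|}{2}\}$ the ratio $\frac{|x|^3}{|x-y|^3}$ is at most $8$. The integral is therefore bounded by $9\int_{|y|\geqslant|x|/2}e^{pv}$, a tail of the convergent integral $\beta$, which tends to $0$.
\item[(iii)] On $|y|<\frac{|x|}{2}$ I use $\bigl||x|^3-|x-y|^3\bigr|\leqslant C|y||x|^2$ and $|x-y|\geqslant\frac{|x|}{2}$. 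This bounds the contribution by $\frac{C}{|x|}\int_{|y|<|x|/2}|y|e^{pv(y)}\,\mathrm{d}y$. The part with $|y|<R_0$ is $O(|x|^{-1})$. The part with $R_0\leqslant|y|<\frac{|x|}{2}$ is at most $\frac{C}{|x|}\int_{R_0}^{|x|/2}r^{4-\frac{\alpha p+4}{2}}\,\mathrm{d}r$. This is of order $|x|^{-1}$ times one of $|x|^{\frac{6-\alpha p}{2}}$, $\ln|x|$, or a constant, according to whether $\alpha p<6$, $\alpha p=6$, or $\alpha p>6$. In every case the result is $o(1)$, because $\alpha p>4$ gives $\frac{6-\alpha p}{2}<1$.
\end{itemize}

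Combining the three regions gives $|x|^3u(x)\to\beta$.

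The only real care is the three-case bookkeeping in region (iii), which is exactly where the hypothesis $\alpha p>4$ is needed. The rest is a direct transcription of the $n=3$ argument.
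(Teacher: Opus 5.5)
Your proposal is correct and follows essentially the same route as the paper. You obtain the two limits from $v=-\alpha\ln|x|+o(\ln|x|)$ and get $\alpha\geqslant\frac1p$ from the finiteness of $\int_{\mathbb{R}^4}|y|^{-3}e^{pv}\,\mathrm{d}y$. You then take $\delta=\frac{\alpha p-4}{2}$ and use the same region decomposition, including the same three-case computation ($\alpha p<6$, $=6$, $>6$) on $R_0\leqslant|y|<\frac{|x|}{2}$. Your explicit contradiction argument for $\alpha p\geqslant1$ simply spells out what the paper leaves as ``easily inferred''. In region (i), note that you need $|x|\geqslant 2R_0$ so that $|y|\geqslant R_0$ there.
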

\begin{proof}
The asymptotic property of $v$ implies that
$$
v(x) = -\alpha\ln{|x|} + o(\ln{|x|}) \quad\text{as}\; |x| \to \infty.
$$
Therefore, we obtain
$$
e^{pv(x)} = |x|^{-\alpha p}e^{o(\ln{|x|})} \quad\text{as}\; |x| \to \infty.
$$
Therefore, for arbitrarily small $\delta > 0$,
$$
\left\{
\begin{aligned}
\lim_{|x|\to\infty}{\frac{e^{pv(x)}}{|x|^{-\alpha p - \delta}}} & = +\infty \\
\lim_{|x|\to\infty}{\frac{e^{pv(x)}}{|x|^{-\alpha p + \delta}}} & = 0.
\end{aligned}
\right.
$$
From Lemma \ref{lemma Integral Representation of u for n = 4}, one can easily infer that $\alpha \geqslant \frac{1}{p}$. Furthermore, if we assume $\alpha > \frac{4}{p}$, from the asymptotic property of $v$, it follows immediately that $\frac{1}{4\pi^2}\int_{\mathbb{R}^4}{e^{pv(x)}\mathrm{d}x} < +\infty$.
Next, we prove the asymptotic property of $u$. Let $\delta = \frac{\alpha p-4}{2}$, then there exists a $R_0 \geqslant 1$ sufficiently large such that
$$
e^{pv(x)} \leqslant |x|^{-\frac{\alpha p+4}{2}}, \quad \forall |x| \geqslant R_0.
$$
From the integral representation formula of $u$, we only need to show
$$
\lim_{|x|\to\infty}{\int_{\mathbb{R}^4}{\frac{|x|^3-|x-y|^3}{|x-y|^3}e^{pv(y)}}\mathrm{d}y} = 0.
$$
Indeed, for any $|x| \geqslant R_0$, we have
$$
\begin{aligned}
& \left|\int_{\mathbb{R}^4}{\frac{|x|^3-|x-y|^3}{|x-y|^3}e^{pv(y)}}dy\right| \\
\leqslant & 7\int_{|y-x|<\frac{|x|}{2}}{\frac{1}{|x-y|^3|y|^\frac{\alpha p-2}{2}}\mathrm{d}y} + 7\int_{|y-x|\geqslant\frac{|x|}{2}\wedge|y|\geqslant\frac{|x|}{2}}{e^{pv(y)}\mathrm{d}y} \\
& \; + \frac{14}{|x|}\int_{|y-x|\geqslant\frac{|x|}{2}\wedge|y|<\frac{|x|}{2}\wedge|y|<R_0}{|y|e^{pv(y)}\mathrm{d}y} \\
& \; + \frac{14}{|x|}\int_{|y-x|\geqslant\frac{|x|}{2}\wedge R_0\leqslant|y|<\frac{|x|}{2}}{\frac{1}{|y|^\frac{\alpha p+2}{2}}\mathrm{d}y} \\
\leqslant & \frac{7\times2^\frac{\alpha p-2}{2}\pi^2}{|x|^\frac{\alpha p-4}{2}} + o_{|x|}(1) + \frac{14}{|x|}\int_{|y|<R_0}{|y|e^{pv(y)}\mathrm{d}y} + \frac{28\pi^2}{|x|}\xi(x) \\
 = & o_{|x|}(1),
\end{aligned}
$$
where
$$
\xi(x)
 =
\left\{
\begin{aligned}
& \frac{2}{6-\alpha p}\left(\frac{|x|}{2}\right)^\frac{6-\alpha p}{2}, & \;\text{if}\; & 4 < \alpha p < 6, \\
& \ln{\left(\frac{|x|}{2}\right)}, & \;\text{if}\; & \alpha p = 6, \\
& \frac{2}{\alpha p-6}R_0^\frac{6-\alpha p}{2}, & \;\text{if}\; & \alpha p > 6.
\end{aligned}
\right.
$$
Thus we obtained the asymptotic property for $u$.
\end{proof}

As a consequence, we can prove the following asymptotic property of
$$
\int_{\mathbb{R}^4}{\frac{u^2(y)}{|x-y|^2}\mathrm{d}y},
$$
as $|x|$ tends to $\infty$.

\begin{cor}\label{corollary Asymptotic Property of P for n = 4}
Assume $n=4$ and $p>0$. Let $(u,v)$ be a pair of classical solutions to the system \eqref{system} such that $u \geqslant 0$, $v(x)=o(|x|^2)$ as $|x|\rightarrow+\infty$ and $u$ satisfies the finite total mass condition \eqref{fm}. Suppose $u(x) = O(|x|^K)$ as $|x|\rightarrow+\infty$ for some $K \gg 1$ arbitrarily large. If $\alpha > \frac{4}{p}$, then we have
$$
\lim_{|x|\to\infty}{\int_{\mathbb{R}^4}{\frac{|x|^2u^2(y)}{|x-y|^2}\mathrm{d}y}} = \int_{\mathbb{R}^4}{u^2(y)\mathrm{d}y}.
$$
\end{cor}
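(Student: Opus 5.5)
Since $\int_{\mathbb{R}^4}u^2(y)\,\mathrm{d}y$ is independent of $x$, the plan is to mirror the proof of Corollary \ref{corollary Asymptotic Property of P for n = 3}: it suffices to show
$$
\lim_{|x|\to\infty}\int_{\mathbb{R}^4}\frac{|x|^2-|x-y|^2}{|x-y|^2}u^2(y)\,\mathrm{d}y=0 .
$$
Two inputs are needed. First, since $\alpha>\frac{4}{p}$, Lemma \ref{lemma Asymptotic Property for n = 4} gives $|x|^3u(x)\to\beta$, so there is $R_0\geqslant1$ with $u^2(y)\leqslant C|y|^{-6}$ for $|y|\geqslant R_0$. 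Second, $u^2\in L^1(\mathbb{R}^4)$, since $u$ is continuous and $|y|^{-6}$ is integrable at infinity in $\mathbb{R}^4$. This also shows the limit is finite.

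For $|x|$ large, split $\mathbb{R}^4$ into three regions: (a) $|y-x|<\frac{|x|}{2}$; (b) $|y-x|\geqslant\frac{|x|}{2}$ and $|y|\geqslant\frac{|x|}{2}$; (c) $|y-x|\geqslant\frac{|x|}{2}$ and $|y|<\frac{|x|}{2}$, with (c) further split at $|y|=R_0$.

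In (a), $|y|\geqslant\frac{|x|}{2}$ and $\big||x|^2-|x-y|^2\big|\leqslant 3|x|^2$. So the contribution is at most
$$
C|x|^2\cdot|x|^{-6}\int_{|y-x|<|x|/2}\frac{\mathrm{d}y}{|x-y|^2}\leqslant C|x|^{-4}\cdot|x|^2\to0 .
$$
In (b), the weight $\frac{\left||x|^2-|x-y|^2\right|}{|x-y|^2}$ is bounded by a constant. The integral is then a constant times the tail $\int_{|y|\geqslant|x|/2}u^2$, which tends to $0$ because $u^2\in L^1$.

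In (c), $\big||x|^2-|x-y|^2\big|\leqslant|y|(2|x|+|y|)\leqslant 3|x||y|$ and $|x-y|\geqslant\frac{|x|}{2}$, so the weight is at most $\frac{12|y|}{|x|}$. The part with $|y|<R_0$ gives $\frac{C}{|x|}\int_{|y|<R_0}|y|u^2\to0$. The part with $R_0\leqslant|y|<\frac{|x|}{2}$ gives
$$
\frac{C}{|x|}\int_{R_0}^{|x|/2}r^{-5}r^3\,\mathrm{d}r\leqslant\frac{C}{|x|R_0}\to0 .
$$
Summing the three estimates yields the claim.

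The only real input is the decay $u=O(|x|^{-3})$ from Lemma \ref{lemma Asymptotic Property for n = 4}. The rest is routine; the one point to check is that the near-diagonal singularity $|x-y|^{-2}$ is locally integrable in $\mathbb{R}^4$, which it is, so region (a) causes no difficulty.
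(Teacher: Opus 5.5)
Your proposal is correct and follows the paper's proof essentially step for step. Both reduce to showing $\int_{\mathbb{R}^4}\frac{|x|^2-|x-y|^2}{|x-y|^2}u^2(y)\,\mathrm{d}y\to0$ using the decay $u^2(y)\leqslant C|y|^{-6}$ from Lemma \ref{lemma Asymptotic Property for n = 4}, split into the same three regions with the last one cut at $|y|=R_0$, and obtain the same $O(|x|^{-2})$, tail, and $O(|x|^{-1})$ bounds. Your explicit observation that $u^2\in L^1(\mathbb{R}^4)$, which is needed to split off $\int u^2$ and to make the region (b) tail vanish, is left implicit in the paper.
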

\begin{proof}
From the asymptotic property for $u$, there exists $R_0 \geqslant 1$ sufficiently large, for some constant $C \in \mathbb{R}$ such that
$$
u^2(x) \leqslant C|x|^{-6}, \quad \forall |x| \geqslant R_0.
$$
We only need to show
$$
\lim_{|x|\to\infty}{\int_{\mathbb{R}^4}{\frac{|x|^2-|x-y|^2}{|x-y|^2}u^2(y)\mathrm{d}y}} = 0.
$$
Indeed, for $|x|$ large enough, we have
$$
\begin{aligned}
& \left|\int_{\mathbb{R}^4}{\frac{|x|^2-|x-y|^2}{|x-y|^2}u^2(y)\mathrm{d}y}\right| \\
\leqslant & 3\int_{|y-x|<\frac{|x|}{2}}{\frac{C}{|x-y|^2|y|^4}\mathrm{d}y} + 3\int_{|y-x|\geqslant\frac{|x|}{2}\wedge|y|\geqslant\frac{|x|}{2}}{u^2(y)\mathrm{d}y} \\
& \; + \frac{6}{|x|}\int_{|y-x|\geqslant\frac{|x|}{2}\wedge|y|\leqslant\frac{|x|}{2}\wedge|y|<R_0}{|y|u^2(y)\mathrm{d}y} \\
& \; + \frac{6}{|x|}\int_{|y-x|\geqslant\frac{|x|}{2} \wedge R_0\leqslant|y|\leqslant\frac{|x|}{2}}{\frac{C}{|y|^5}\mathrm{d}y} \\
\leqslant & \frac{12C\pi^2}{|x|^2} + o_{|x|}(1) + \frac{6}{|x|}\int_{|y|<R_0}{|y|u^2(y)\mathrm{d}y} + \frac{12C\pi^2}{R_0|x|} \\
= & o_{|x|}(1).
\end{aligned}
$$
This finishes our proof of Corollary \ref{corollary Asymptotic Property of P for n = 4}.
\end{proof}

From Lemmas \ref{lemma Integral Representation of u for n = 4} and \ref{lemma Integral Representation of v for n = 4}, we have proved that the classical solution $(u, v)$ of the system \eqref{system} solve the following integral system:
$$
\left\{
\begin{aligned}
& u(x) = \frac{1}{4\pi^2}\int_{\mathbb{R}^4}{\frac{e^{pv(y)}}{|x-y|^3}\mathrm{d}y}, \\
& v(x) = \frac{1}{8\pi^2}\int_{\mathbb{R}^4}{P_4(y)\ln{\left(\frac{|y|}{|x-y|}\right)}\mathrm{d}y} + \gamma,
\end{aligned}
\right.
$$
where $\gamma \in \mathbb{R}$. This concludes our proof of Theorem \ref{theorem Integral Representation for n = 3}.

\subsection{Completion of the proof of Theorem \ref{theorem Main Result}}

Assume $(u,v)$ is a pair of classical solutions to the $3,4$-D system \eqref{system} with $u \geq 0$. One should observe that if $(u, v)$ solve the system \eqref{system} with $n=3,4$ for any given $p \in (0,+\infty )$, then $\widetilde{u}:=p^{\frac{1}{4}}u$ and $\widetilde{v}:=pv+\frac{1}{4}\ln p$ solve \eqref{system} with $p = 1$. Due to this observation, we will take $p = 1$ in \eqref{system} hereafter in subsection 3.2 in order to make the notation lighter.

First, from Lemmas \ref{theorem Integral Representation for n = 3} and \ref{theorem Integral Representation for n = 4}, we have
$$
\left\{
\begin{aligned}
& u(\xi) = C_{u,n}\int_{\mathbb{R}^n}{\frac{e^{v(y)}}{|\xi-y|^{n-1}}\mathrm{d}y}, \\
& v(\xi) = C_{v,n}\int_{\mathbb{R}^n}{\ln\left(\frac{|y|}{|\xi-y|}\right)P(y)u^2(y)\mathrm{d}y}+\gamma,
\end{aligned}
\right.
$$
where $n=3,4$ and
$$
\begin{aligned}
& C_{u,3} = \frac{1}{2\pi^2}, \qquad C_{v,3} = \frac{1}{2\pi^2}, \\
& C_{u,4} = \frac{1}{4\pi^2}, \qquad C_{v,4} = \frac{1}{8\pi^2}, \\
& P(\xi) = \int_{\mathbb{R}^n}{\frac{u^2(z)}{|\xi-z|^2}\mathrm{d}z}.
\end{aligned}
$$
We define
$$
\xi^{x,\lambda} = x+\frac{\lambda^2(\xi-x)}{|\xi-x|^2},
$$
\begin{equation}\label{Kelvin Transformation}
\left\{
\begin{aligned}
& u_{x,\lambda}(\xi) = \left(\frac{\lambda}{|\xi-x|}\right)^{n-1}u(\xi^{x,\lambda}), \\
& v_{x,\lambda}(\xi) = (n+1)\ln\left(\frac{\lambda}{|\xi-x|}\right)+v(\xi^{x,\lambda}), \\
& P_{x,\lambda}(\xi) = \left(\frac{\lambda}{|\xi-x|}\right)^2P(\xi^{x,\lambda}),
\end{aligned}
\right.
\end{equation}
as well as
$$
\left\{
\begin{aligned}
& K(x,\lambda;\xi,z) = \frac{1}{|\xi-z|^{n-1}}- \left(\frac{\lambda}{|\xi-x|}\right)^{n-1}\frac{1}{|\xi^{x,\lambda}-z|^{n-1}}, \\
& M(x,\lambda;\xi,z) = \frac{1}{|\xi-z|^2}- \left(\frac{\lambda}{|\xi-x|}\right)^2\frac{1}{|\xi^{x,\lambda}-z|^2}, \\
& L(x,\lambda;\xi,z) = \ln\left(\frac{|\xi^{x,\lambda}-z|}{|\xi-z|}\frac{|\xi-x|}{\lambda}\right) = \frac{1}{2}\ln\left(1+\frac{\left(\lambda-\frac{|\xi-x|^2}{\lambda}\right)\left(\lambda-\frac{|z-x|^2}{\lambda}\right)}{|\xi-z|^2}\right).
\end{aligned}
\right.
$$
One can verify that $K,M,L>0$ for any $\xi,z\in B(x,\lambda) \backslash \{x\}$ and that
$$
\left\{
\begin{aligned}
u(\xi)-u_{x,\lambda}(\xi) &= C_{u,n}\int_{B(x,\lambda)}{K(x,\lambda;\xi,z)\left(e^{v(z)}-e^{v_{x,\lambda}(z)}\right)\mathrm{d}z}, \\
v(\xi)-v_{x,\lambda}(\xi) &= \left[\alpha-n-1\right]\ln\left(\frac{\lambda}{|\xi-x|}\right) \\
& \quad + C_{v,n}\int_{B(x,\lambda)}{L(x,\lambda;\xi,z)\left(P(z)u^2(z)-P_{x,\lambda}(z)u_{x,\lambda}^2(z)\right)\mathrm{d}z}, \\
P(\xi)-P_{x,\lambda}(\xi) &= \int_{B(x,\lambda)}{M(x,\lambda;\xi,z)(u^2(z)-u_{x,\lambda}^2(z))\mathrm{d}z},
\end{aligned}
\right.
$$
where $\alpha = C_{v,n}\int_{\mathbb{R}^n}P(y)u^2(y)dy < \infty$.

\medskip

In what follows, the two different cases $\alpha\geq n+1$ and $1\leq \alpha\leq n+1$ will be discussed separately.

\medskip

\emph{Step 1}. Start moving the circle $S_{\lambda}(x):=\partial B(x,\lambda)$ from near $\lambda=0$ or $\lambda=+\infty$.

\textit{Case} (i) $1\leq \alpha\leq n+1$. We will show that, for any $x \in \mathbb{R}^n$, there exists $\lambda_0(x) > 0$ small enough, such that for $\lambda < \lambda_0$ and for any $\xi \in B(x,\lambda) \backslash \{x\}$,
$$
u(\xi)-u_{x,\lambda}(\xi) \leqslant 0\quad\text{and}\quad v(\xi)-v_{x,\lambda}(\xi) \leqslant 0.
$$

Let
$$
\begin{aligned}
B_u^+(x,\lambda) & = \{\xi \in B(x,\lambda) \backslash \{x\} | u(\xi)-u_{x,\lambda}(\xi) > 0\}, \\
B_v^+(x,\lambda) & = \{\xi \in B(x,\lambda) \backslash \{x\} | v(\xi)-v_{x,\lambda}(\xi) > 0\}, \\
E(x,\lambda) & = \{\xi \in B(x,\lambda) \backslash \{x\} | (P(\xi)u^2(\xi)-P_{x,\lambda}(\xi)u_{x,\lambda}^2(\xi) > 0\}, \\
E_u^+(x,\lambda) & = E(x,\lambda) \cap B_u^+(x,\lambda), \\
E_P^+(x,\lambda) & = E(x,\lambda) \cap \{\xi \in B(x,\lambda) \backslash \{x\} | P(\xi)-P_{x,\lambda}(\xi) > 0\}.
\end{aligned}
$$
For $\xi \in B_u^+(x,\lambda)$, we have that
$$
\begin{aligned}
0 & < u(\xi)-u_{x,\lambda}(\xi) \\
& \leqslant C_{u,n}\int_{B_v^+(x,\lambda)}{K(x,\lambda;\xi,z)\left(e^{v(z)}-e^{v_{x,\lambda}(z)}\right) \mathrm{d}z} \\
& < C_{u,n}\int_{B_v^+(x,\lambda)}{K(x,\lambda;\xi,z)e^{\eta(z)}(v(z)-v_{x,\lambda}(z))\mathrm{d}z} \\
& < C_{u,n}\int_{B_v^+(x,\lambda)}{\frac{1}{|\xi-z|^{n-1}}e^{\eta(z)}(v(z)-v_{x,\lambda}(z))\mathrm{d}z},
\end{aligned}
$$
where $v_{x,\lambda}(z)<\eta(z)< v(z)$. So according to Hardy-Littlewood-Sobolev Inequality in Lemma \ref{lemma Hardy-Littlewood-Sobolev Inequality}, for some $q \in\left(\frac{n}{n-1},\frac{n}{n-2}\right)$, there exists $K > 0$, such that
$$
\begin{aligned}
\lVert{u-u_{x,\lambda}}\rVert_{L^{q}(B_u^+(x,\lambda))}&\leqslant K\lVert{(v-v_{x,\lambda})e^{\eta(\cdot)}}\rVert_{L^p(B_v^+(x,\lambda))} \\
& \leqslant K\lVert{e^{\eta(\cdot)}}\rVert_{L^q(B_v^+(x,\lambda))} \lVert{v-v_{x,\lambda}}\rVert_{L^n(B_v^+(x,\lambda))},
\end{aligned}
$$
where $\frac{1}{p}-\frac{1}{q} = \frac{1}{n}$. Since $e^{\eta(z)}<e^{v(z)}$ and $v(z)$ is at least $C^1$ near $x$, we may choose $\lambda_1$ small enough, such that for $\lambda < \lambda_1$,
$\lVert{e^{\eta(\cdot)}}\rVert_{L^q(B_v^+(x,\lambda))}\leqslant \frac{1}{10K}$. That is,
$$
\lVert{u-u_{x,\lambda}}\rVert_{L^{q}(B_u^+(x,\lambda))} \leqslant \frac{1}{10}\lVert{v-v_{x,\lambda}}\rVert_{L^n(B_v^+(x,\lambda))}.
$$

Note that, for arbitrary $\varepsilon>0$,
$$
\ln(1+t) = o(t^\varepsilon)\quad \;\text{as}\; t\to\infty.
$$
So for any given $\varepsilon$, there exists $\delta = \delta(\varepsilon)>0$ such that
$$
\ln(1+t)\leqslant t^\varepsilon , \quad \forall t>\frac{1}{\delta^2}.
$$
Thus we can choose $C = C(\varepsilon)>0$ such that
$$
\ln(1+t)\leqslant 2Ct^\varepsilon , \quad \forall t>\frac{1}{4}.
$$
For $\xi,z\in B(x,\lambda)\backslash\{x\}$, one has $|\xi-z|\leqslant 2\lambda$, so
$$
0<L(x,\lambda;\xi,z)\leqslant \frac{1}{2}\ln\left(1+\frac{\lambda^2}{|\xi-z|^2}\right) \leqslant C\frac{\lambda^{2\varepsilon}}{|\xi-z|^{2\varepsilon}}.
$$
Thus for $\xi \in B_v^+(x,\lambda)$, we have
$$
\begin{aligned}
0 & < v(\xi)-v_{x,\lambda}(\xi)  < C_{v,n}\int_{E(x,\lambda)}{L(x,\lambda;\xi,z)\left(P(z)u^2(z)-P_{x,\lambda}(z)u_{x,\lambda}^2(z)\right)\mathrm{d}z} \\
& < C_{v,n}\int_{E_P^+(x,\lambda)}{L(x,\lambda;\xi,z)u^2(z)[P(z)-P_{x,\lambda}(z)]\mathrm{d}z} \\
& \qquad + C_{v,n}\int_{E_u^+(x,\lambda)}{L(x,\lambda;\xi,z)P_{x,\lambda}(z)[u^2(z)-u_{x,\lambda}^2(z)]\mathrm{d}z} \\
& < C_{v,n}C(\varepsilon)\int_{E_P^+(x,\lambda)}{\frac{\lambda^{2\varepsilon}}{|\xi-z|^{2\varepsilon}}u^2(z)[P(z)-P_{x,\lambda}(z)]\mathrm{d}z} \\
& \qquad + C_{v,n}C(\varepsilon)\int_{E_u^+(x,\lambda)}{\frac{\lambda^{2\varepsilon}}{|\xi-z|^{2\varepsilon}}P_{x,\lambda}(z)[u^2(z)-u_{x,\lambda}^2(z)]\mathrm{d}z}.
\end{aligned}
$$

Because $P_{x,\lambda} < P$ on $E_P^+(x,\lambda)$, we have
$$
\begin{aligned}
0 & < P(\xi)-P_{x,\lambda}(\xi) \leqslant \int_{B_u^+(x,\lambda)}{M(x,\lambda;\xi,z)\left(u^2(z)-u_{x,\lambda}^2(z)\right) \mathrm{d}z}\\
& < \int_{B_u^+(x,\lambda)}{\frac{1}{|\xi-z|^2}\left(u^2(z)-u_{x,\lambda}^2(z)\right)\mathrm{d}z}.
\end{aligned}
$$
Using Hardy-Littlewood-Sobolev inequality \ref{lemma Hardy-Littlewood-Sobolev Inequality}, there exists $R>0$, such that
$$
\begin{aligned}
\lVert{P-P_{x,\lambda}}\rVert_{L^{s}(E_P^+(x,\lambda))} &\leqslant R\lVert{u^2-u_{x,\lambda}^2}\rVert_{L^{q}(B_u^+(x,\lambda))} \\
&\leqslant 2R\lVert{u}\rVert_{L^{\infty}(B_u^+(x,\lambda))}\lVert{u-u_{x,\lambda}}\rVert_{L^{q}(B_u^+(x,\lambda))},
\end{aligned}
$$
where $\frac{1}{q} = \frac{1}{s}+\frac{n-2}{n}$. Now choosing $2\varepsilon = n-\frac{n}{q}+1$, we have that there exists $K'>0$, such that
$$
\begin{aligned}
& \qquad \lVert{v-v_{x,\lambda}}\rVert_{L^n(B_v^+(x,\lambda))} \\
& \leqslant K'\lambda^{2\varepsilon}\lVert{(P-P_{x,\lambda})u^2}\rVert_{L^q(E_P^+(x,\lambda))}+K'\lambda^{2\varepsilon}\lVert{P_{x,\lambda}(u^2-u_{x,\lambda}^2)}\rVert_{L^q(E_u^+(x,\lambda))} \\
& \leqslant K'\lambda^{2\varepsilon}\lVert{P-P_{x,\lambda}}\rVert_{L^{s}(E_P^+(x,\lambda))}\lVert{u^2}\rVert_{L^\frac{n}{n-2}(E_P^+(x,\lambda))} \\
& \qquad + K'\lambda^{2\varepsilon}\lVert{u-u_{x,\lambda}}\rVert_{L^{q}(E_u^+(x,\lambda))}\lVert{P_{x,\lambda}(u+u_{x,\lambda})}\rVert_{L^\infty(E_u^+(x,\lambda))} \\
& \leqslant 2K'R\lambda^{2\varepsilon}\lVert{u-u_{x,\lambda}}\rVert_{L^{q}(B_u^+(x,\lambda))}\lVert{u}\rVert_{L^{\infty}(B_u^+(x,\lambda))}\lVert{u^2}\rVert_{L^\frac{n}{n-2}(E_P^+(x,\lambda))} \\
& \qquad + K'\lambda^{2\varepsilon}\lVert{u-u_{x,\lambda}}\rVert_{L^{q}(E_u^+(x,\lambda))}\lVert{P_{x,\lambda}(u+u_{x,\lambda})}\rVert_{L^\infty(E_u^+(x,\lambda))}.
\end{aligned}
$$
If $\lambda$ is bounded above, there exist $c = c(x)>0$, such that $u_{x,\lambda}(\xi)>c$ for $\xi\in B(x,\lambda) \backslash \{x\}$. So, on $E_u^+(x,\lambda)$, we have that
$$
0<P_{x,\lambda}(\xi)[u(\xi)+u_{x,\lambda}(\xi)]\leqslant \frac{P(\xi)u^2(\xi)}{c^2}\left[2u(\xi)\right] .
$$
By the local boundness of $u(\xi)$ and $P(\xi)$, we may choose $\lambda_2$ small enough, such that for $\lambda < \lambda_2$,
$$
\lVert{v-v_{x,\lambda}}\rVert_{L^n(B_v^+(x,\lambda))} \leqslant \frac{1}{10}\lVert{u-u_{x,\lambda}}\rVert_{L^{q}(B_u^+(x,\lambda))}.
$$
Let $\lambda_0 = \min(\lambda_1,\lambda_2)$, so when $\lambda < \lambda_0$,
$$
\lVert{u-u_{x,\lambda}}\rVert_{L^{q}(B_u^+(x,\lambda))} = \lVert{v-v_{x,\lambda}}\rVert_{L^n(B_v^+(x,\lambda))} = 0.
$$
That is, $B_u^+(x,\lambda) = B_v^+(x,\lambda) = \varnothing$. This completes Step 1 for the case $1\leq\alpha\leq n+1$.

\medskip

\textit{Case} (ii) $\alpha\geq n+1$. We will show that, for any $x \in \mathbb{R}^n$, there exists $\lambda_0(x) > 0$ large enough, such that for $\lambda > \lambda_0$ and for any $\xi \in B(x,\lambda) \backslash \{x\}$,
$$
u_{x,\lambda}(\xi)-u(\xi) \leqslant 0\quad\text{and}\quad v_{x,\lambda}(\xi)-v(\xi) \leqslant 0.
$$

Let
$$
\begin{aligned}
B_u^-(x,\lambda) & = \{\xi \in B(x,\lambda) \backslash \{x\} | u_{x,\lambda}(\xi)-u(\xi) > 0\}, \\
B_v^-(x,\lambda) & = \{\xi \in B(x,\lambda) \backslash \{x\} | v_{x,\lambda}(\xi)-v(\xi) > 0\}, \\
E(x,\lambda) & = \{\xi \in B(x,\lambda) \backslash \{x\} | P_{x,\lambda}(\xi)u_{x,\lambda}^2(\xi)-P(\xi)u^2(\xi) > 0\}, \\
E_u^-(x,\lambda) & = E(x,\lambda) \cap B_u^-(x,\lambda), \\
E_P^-(x,\lambda) & = E(x,\lambda) \cap \{\xi \in B(x,\lambda) \backslash \{x\} | P_{x,\lambda}(\xi)-P(\xi) > 0\}.
\end{aligned}
$$

We need to prove the following lemma.
\begin{lem}\label{integral bound}
Suppose $f$ is a nonnegative $C^1$ function such that $\limsup\limits_{|z|\to\infty}f(z)|z|^K$ exists, let
$$
f_{x,\lambda}(\xi) = \frac{\lambda^{L}}{|\xi-x|^{L}}f(\xi^{x,\lambda}),
$$
where $K\geqslant L>0$, then we have that, for $p>0$,
$$
\int_{B(x,\lambda)}f_{x,\lambda}^p(z)\mathrm{d}z \leqslant I_x\lambda^{2n-Lp}
$$
provided that $\lambda>\lambda_x$, where $I_x, \lambda_x$ are constants that depend on $x$ but independent of $\lambda$.
\end{lem}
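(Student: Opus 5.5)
The plan is to change variables by the inversion $\xi\mapsto\xi^{x,\lambda}$, which maps $B(x,\lambda)\setminus\{x\}$ onto $\mathbb{R}^n\setminus\overline{B(x,\lambda)}$. Its Jacobian is $\left(\frac{\lambda}{|\xi-x|}\right)^{2n}$, and it satisfies $|\xi^{x,\lambda}-x|\,|\xi-x|=\lambda^2$. Put $w=\xi^{x,\lambda}$, so that $\frac{\lambda}{|\xi-x|}=\frac{|w-x|}{\lambda}$ and $\mathrm{d}\xi=\left(\frac{\lambda}{|w-x|}\right)^{2n}\mathrm{d}w$. This gives
$$
\int_{B(x,\lambda)}f_{x,\lambda}^p(\xi)\,\mathrm{d}\xi=\int_{|w-x|>\lambda}\left(\frac{|w-x|}{\lambda}\right)^{Lp}f^p(w)\left(\frac{\lambda}{|w-x|}\right)^{2n}\mathrm{d}w=\lambda^{2n-Lp}\int_{|w-x|>\lambda}|w-x|^{Lp-2n}f^p(w)\,\mathrm{d}w .
$$
So the claim reduces to bounding $J_x(\lambda):=\int_{|w-x|>\lambda}|w-x|^{Lp-2n}f^p(w)\,\mathrm{d}w$ by a constant $I_x$, uniformly in $\lambda>\lambda_x$.

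Because $\limsup_{|z|\to\infty}f(z)|z|^K$ is finite, there are $C_0$ and $R_0$ with $f(w)\le C_0|w|^{-K}$ for $|w|\ge R_0$. Choose $\lambda_x:=2(|x|+R_0)$. For $|w-x|>\lambda>\lambda_x$ we then have $|w|\ge|w-x|-|x|\ge\frac12|w-x|\ge R_0$, which gives $f^p(w)\le C_0^p2^{Kp}|w-x|^{-Kp}$. Hence
$$
J_x(\lambda)\le C_0^p2^{Kp}\int_{|w-x|>\lambda_x}|w-x|^{(L-K)p-2n}\,\mathrm{d}w=C_0^p2^{Kp}|\mathbb{S}^{n-1}|\int_{\lambda_x}^{\infty}r^{(L-K)p-n-1}\,\mathrm{d}r .
$$
Since $K\ge L$, the exponent satisfies $(L-K)p-n-1\le -n-1<-1$, so the integral converges and equals $\frac{\lambda_x^{(L-K)p-n}}{n+(K-L)p}$. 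This gives the required $I_x$, which depends on $x$ through $\lambda_x$ but not on $\lambda$. In fact the bound gets better as $\lambda$ grows, since the domain $\{|w-x|>\lambda\}$ shrinks.

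No step here is a real obstacle. The only care needed is in choosing $\lambda_x$ so that the decay hypothesis on $f$ is usable on the whole integration region, which is done by keeping $|w|$ comparable to $|w-x|$ there. The hypothesis $K\ge L$ is exactly what makes the radial integral converge, even without a margin, because of the extra $-n$ in the exponent. The $C^1$ assumption and nonnegativity are used only to make sense of $f^p$ and to keep $f$ measurable and locally bounded; local boundedness is not even needed, because the integration region stays away from any compact set once $\lambda>\lambda_x$.
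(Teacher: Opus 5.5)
Your proof is correct and follows the same route as the paper. Both use the inversion change of variables $w=\xi^{x,\lambda}$ to reduce the integral to $\lambda^{2n-Lp}\int_{|w-x|>\lambda}|w-x|^{Lp-2n}f^p(w)\,\mathrm{d}w$, then apply the decay bound $f(w)\le C|w|^{-K}$ and the comparability $|w|\sim|w-x|$ for $\lambda$ beyond an $x$-dependent threshold (the paper takes $\lambda_x=\max\{|x|+R,3|x|\}$ where you take $2(|x|+R_0)$), and your explicit radial computation makes clear that $K\ge L$ is what makes the integral converge.
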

\begin{proof}
Choose $R>0$, for $|z|>R$, $f(z)|z|^K\leqslant C$. Now choose $\lambda_x = \max\{|x|+R, 3|x|\}$, then for $|z-x|>\lambda$, we have $|z|>R$ and $|z-x|\sim|z|$. So
$$
\begin{aligned}
\int_{B(x,\lambda)}f_{x,\lambda}^p(z)\mathrm{d}z & = \int_{B(x,\lambda)^c}f_{x,\lambda}^p(z^{x,\lambda})\left(\frac{\lambda}{|z-x|}\right)^{2n}\mathrm{d}z\\
& = \int_{B(x,\lambda)^c}f^p(z)\left(\frac{\lambda}{|z-x|}\right)^{2n-pL}\mathrm{d}z\\
& = \int_{B(x,\lambda)^c}[f(z)|z|^K]^p\left(\frac{\lambda}{|z-x|}\right)^{2n-pL}|z|^{-pK}\mathrm{d}z\\
&\leqslant \lambda^{2n-pL}\int_{B(x,\lambda)^c}\frac{C^p}{|z|^{2n+p(K-L)}}\mathrm{d}z \leqslant I_x\lambda^{2n-pL}.
\end{aligned}.
$$
This ends our proof.
\end{proof}

For $\xi \in B_u^-(x,\lambda)$, we have that
$$
\begin{aligned}
0 & < u_{x,\lambda}(\xi)-u(\xi) \\
& \leqslant C_{u,n}\int_{B_v^-(x,\lambda)}{K(x,\lambda;\xi,z)\left(e^{v_{x,\lambda}(z)}-e^{v(z)}\right) \mathrm{d}z}\\
& < C_{u,n}\int_{B_v^-(x,\lambda)}{K(x,\lambda;\xi,z)e^{\eta(z)}(v_{x,\lambda}(z)-v(z))\mathrm{d}z} \\
& < C_{u,n}\int_{B_v^-(x,\lambda)}{\frac{1}{|\xi-z|^{n-1}}e^{\eta(z)}(v_{x,\lambda}(z)-v(z))\mathrm{d}z},
\end{aligned}
$$
where $v(z)<\eta(z)<v_{x,\lambda}(z)$. So according to Hardy-Littlewood-Sobolev Inequality in Lemma \ref{lemma Hardy-Littlewood-Sobolev Inequality}, for some $q \in\left(\frac{n}{n-1},\frac{n}{n-2}\right)$, there exists $K > 0$, such that
$$
\begin{aligned}
\lVert{u_{x,\lambda}-u}\rVert_{L^{q}(B_u^-(x,\lambda))}&\leqslant K\lVert{(v_{x,\lambda}-v)e^{\eta}}\rVert_{L^p(B_v^-(x,\lambda))} \\
& \leqslant K\lVert{e^{\eta}}\rVert_{L^n(B_v^-(x,\lambda))} \lVert{v_{x,\lambda}-v}\rVert_{L^{q}(B_v^-(x,\lambda))} \\
& \leqslant K\left(\int_{B(x,\lambda)}e^{nv_{x,\lambda}(z)}\mathrm{d}z\right)^{\frac{1}{n}} \lVert{v_{x,\lambda}-v}\rVert_{L^{q}(B_v^-(x,\lambda))}\\
& \leqslant K_{n,x}\lambda^{1-n}\lVert{v_{x,\lambda}-v}\rVert_{L^{q}(B_v^-(x,\lambda))}.
\end{aligned}
$$
Here we have used $e^{\eta(z)}<e^{v_{x,\lambda}(z)}$, Lemmas \ref{lemma Asymptotic Property for n = 3}, \ref{lemma Asymptotic Property for n = 4} to deduce $e^{v(z)}\sim |z|^{-\alpha}$, and the Lemma \ref{integral bound} with $f(z) = e^{v(z)}$.

For $\xi \in B_v^-(x,\lambda)$, we have
$$
\begin{aligned}
0 & < v_{x,\lambda}(\xi)-v(\xi)  < C_{v,n}\int_{E(x,\lambda)}{L(x,\lambda;\xi,z)\left(P_{x,\lambda}(z)u_{x,\lambda}^2(z)-P(z)u^2(z)\right)\mathrm{d}z} \\
& < C_{v,n}\int_{E_P^-(x,\lambda)}{L(x,\lambda;\xi,z)u_{x,\lambda}^2(z)[P_{x,\lambda}(z)-P(z)]\mathrm{d}z} \\
& \qquad + C_{v,n}\int_{E_u^-(x,\lambda)}{L(x,\lambda;\xi,z)P(z)[u_{x,\lambda}^2(z)-u^2(z)]\mathrm{d}z} \\
& < C_{v,n}C(\varepsilon)\int_{E_P^-(x,\lambda)}{\frac{\lambda^{2\varepsilon}}{|\xi-z|^{2\varepsilon}}u_{x,\lambda}^2(z)[P_{x,\lambda}(z)-P(z)]\mathrm{d}z} \\
& \qquad + C_{v,n}C(\varepsilon)\int_{E_u^-(x,\lambda)}{\frac{\lambda^{2\varepsilon}}{|\xi-z|^{2\varepsilon}}P(z)[u_{x,\lambda}^2(z)-u^2(z)]\mathrm{d}z}.
\end{aligned}
$$

Let $2\varepsilon = n-1$ and $\frac{1}{s} = 1-\frac{1}{n}-\frac{1}{q}$. Since $\alpha\geqslant n+1$, we have $u^2(z)\sim |z|^{-2(n-1)}$ and $P(z)u(z)\sim |z|^{-(n+1)}$. Thus we deduce that
$$
\begin{aligned}
& \qquad \lVert{v_{x,\lambda}-v}\rVert_{L^{q}(B_v^-(x,\lambda))} \\
& \leqslant K\lambda^{n-1}\lVert{(P_{x,\lambda}-P)u_{x,\lambda}^2}\rVert_{L^p(E_P^-(x,\lambda))}+K\lambda^{n-1}\lVert{P(u_{x,\lambda}^2-u^2)}\rVert_{L^p(E_u^-(x,\lambda))} \\
& \leqslant K\lambda^{n-1}\lVert{P_{x,\lambda}-P}\rVert_{L^n(E_P^-(x,\lambda))}\lVert{u_{x,\lambda}^2}\rVert_{L^q(E_P^-(x,\lambda))} \\
& \qquad + K\lambda^{n-1}\lVert{u_{x,\lambda}-u}\rVert_{L^{q}(E_u^-(x,\lambda))}\lVert{P(u+u_{x,\lambda})}\rVert_{L^n(E_u^-(x,\lambda))} \\
& \leqslant K\lambda^{n-1}\lVert{u_{x,\lambda}^2-u^2}\rVert_{L^{\frac{n}{n-1}}(E_u^-(x,\lambda))}\left(\int_{B(x,\lambda)}u_{x,\lambda}^{2q}(z)\mathrm{d}z\right)^{\frac{1}{q}}  \\
& \qquad + 2K\lambda^{n-1}\lVert{u_{x,\lambda}-u}\rVert_{L^{q}(E_u^-(x,\lambda))}\left(\int_{B(x,\lambda)}[P_{x,\lambda}(z)u_{x,\lambda}(z)]^n\mathrm{d}z\right)^{\frac{1}{n}}  \\
& \leqslant K\lambda^{\frac{2n}{q}-(n-1)}\lVert{u_{x,\lambda}-u}\rVert_{L^q(E_u^-(x,\lambda))}\left(\int_{B(x,\lambda)}u_{x,\lambda}^{s}(z)\mathrm{d}z\right)^{\frac{1}{s}}+K\lVert{u_{x,\lambda}-u}\rVert_{L^{q}(E_u^-(x,\lambda))} \\
& \leqslant 2K\lVert{u_{x,\lambda}-u}\rVert_{L^{q}(E_u^-(x,\lambda))}+K\lVert{u_{x,\lambda}-u}\rVert_{L^{q}(E_u^-(x,\lambda))}.
\end{aligned}
$$
Here we have used Lemmas \ref{integral bound}, \ref{lemma Asymptotic Property for n = 3} and \ref{lemma Asymptotic Property for n = 4} again.

Now we may choose $\lambda_0$ big enough, such that, for $\lambda > \lambda_0$,
$$
\lVert{v_{x,\lambda}-v}\rVert_{L^{q}(B_v^-(x,\lambda))} \leqslant \frac{1}{10}\lVert{v_{x,\lambda}-v}\rVert_{L^{q}(B_u^-(x,\lambda))}.
$$
That gives
$$
\lVert{u_{x,\lambda}-u}\rVert_{L^{q}(B_u^-(x,\lambda))} = \lVert{v_{x,\lambda}-v}\rVert_{L^{q}(B_v^-(x,\lambda))} = 0.
$$
That is, $B_u^-(x,\lambda) = B_v^-(x,\lambda) = \varnothing$. This completes Step 1 for the case $\alpha\geq n+1$.

\bigskip

Step 2. Moving the sphere $S_{\lambda}(x)$ outward or inward until the limiting position.

\medskip

In what follows, we will derive contradictions in both the cases $\alpha>n+1$ and $1\leq \alpha<n+1$, and hence we must have $\alpha=n+1$.

\medskip

\textit{Case} (i) $1\leq\alpha<n+1$. Let us define
\begin{equation}\label{Definition of Supremum Limiting Radius}
\overline{\lambda}(x) = \sup\left\{\mu > 0 | (u-u_{x,\lambda})(\xi) \leqslant 0 \;\text{and}\; (v-v_{x,\lambda})(\xi) \leqslant 0, \forall 0 < \lambda < \mu , \xi \in B(x,\lambda) \backslash \{x\}\right\}.
\end{equation}
We will prove that, for any $\overline{x}\in \mathbb{R}^n$, $\overline{\lambda}(\overline{x}) = \infty$.

By the definition of $\overline{\lambda}(\overline{x})$ and continuity, we have
$$
u(\xi) \leqslant u_{\overline{x},\overline{\lambda}(\overline{x})}(\xi) \,,\, v(\xi) \leqslant v_{\overline{x},\overline{\lambda}(\overline{x})}(\xi), \qquad \forall \xi \in B(\overline{x},\overline{\lambda}(\overline{x})).
$$
By the positivity of kernel $M(x,\lambda;\xi,z)$, we obtain that for $\xi \in B(\overline{x},\overline{\lambda}(\overline{x}))$, $P(\xi)\leqslant P_{\overline{x},\overline{\lambda}(\overline{x})}(\xi)$.
So for $\xi \in B(\overline{x},\overline{\lambda}(\overline{x}))$, by the positivity of kernel $L(x,\lambda;\xi,z)$, we have that
$$
\begin{aligned}
v(\xi)-v_{\overline{x},\lambda}(\xi) & = \left[\alpha-n-1\right]\ln\left(\frac{\lambda}{|\xi-\overline{x}|}\right)\\
&\qquad+C_{v,n}\int_{B(\overline{x},\lambda)}{L(\overline{x},\lambda;\xi,z)\left(P(z)u^2(z)-P_{\overline{x},\lambda}(z)u_{\overline{x},\lambda}^2(z)\right)\mathrm{d}z}\\
&\leqslant[\alpha-n-1]\ln\left(\frac{\lambda}{|\xi-\overline{x}|}\right)<0.
\end{aligned}
$$
Then again by the positivity of Kernel $K(x,\lambda;\xi,z)$ as well as $C^1$ continuity of $v$, we have, for $\xi \in B(\overline{x},\overline{\lambda}(\overline{x}))$,
$$
u(\xi)-u_{\overline{x},\lambda}(\xi) = C_{u,n}\int_{B(\overline{x},\lambda)}{K(\overline{x},\lambda;\xi,z)\left(e^{v(z)}-e^{v_{\overline{x},\lambda}(z)}\right)\mathrm{d}z}<0.
$$
Then for small $\delta$, $u(\xi)-u_{\overline{x},\overline{\lambda}(\overline{x})}(\xi)$ is a $C^1$ negative function defined on compact subset $\overline{B}(\overline{x},\overline{\lambda}(\overline{x})-\delta)\backslash B(\overline{x},\delta)$, so is $v(\xi)-v_{\overline{x},\overline{\lambda}(\overline{x})}(\xi)$. As a consequence, they must attain their negative maximums, that is, there exists $C_0(\delta) > 0$ such that
$$
\left\{
\begin{aligned}
u(\xi)-u_{\overline{x},\overline{\lambda}(\overline{x})}(\xi)  & \leqslant -C_0, \\
v(\xi)-v_{\overline{x},\overline{\lambda}(\overline{x})}(\xi)  & \leqslant -C_0,
\end{aligned}
\right.
\qquad
\forall \xi \in \overline{B}(\overline{x},\overline{\lambda}(\overline{x})-\delta)\backslash B(\overline{x},\delta).
$$
So by the uniform continuity w.r.t. $\lambda$ of $u_{x,\lambda},v_{x,\lambda}$ in compact subset, there exists sufficiently small $\varepsilon(\delta) > 0$, such that for all $\lambda \in (\overline{\lambda}(\overline{x}),\overline{\lambda}(\overline{x})+\varepsilon)$, we have
$$
\left\{
\begin{aligned}
u(\xi) - u_{\overline{x},\lambda}(\xi) &<-\frac{C_0}{2}, \\
v(\xi) - v_{\overline{x},\lambda}(\xi) &<-\frac{C_0}{2},
\end{aligned}
\right.
\qquad
\forall \xi \in \overline{B}(\overline{x},\overline{\lambda}(\overline{x})-\delta)\backslash B(\overline{x},\delta).
$$
For $\lambda \in (\overline{\lambda}(\overline{x}),\overline{\lambda}(\overline{x})+\varepsilon)$, we define
$$
A(\overline{x},\lambda,\varepsilon,\delta) \triangleq \{\xi \in B(\overline{x},\lambda) | |\xi-\overline{x}| < \delta \;\text{or}\; \overline{\lambda}(\overline{x})-\delta < |\xi-\overline{x}| < \lambda\}.
$$
Thus $B_u^+(\overline{x},\lambda) = B_v^+(\overline{x},\lambda)\subset A(\overline{x},\lambda,\varepsilon,\delta)$. We deduced that there exists $A = A(\overline{\lambda},\overline{x})>0$ such that
$$
|A(\overline{x},\lambda,\varepsilon,\delta)|\leqslant A(\varepsilon+\delta)^{n-1}.
$$
Denote $\overline{\lambda}(\overline{x})$ by $\overline{\lambda}$, from the above analysis, we deduced that
\begin{equation}\label{e1}
\begin{aligned}
\lVert{u-u_{\overline{x},\lambda}}\rVert_{L^{q}(B_u^+(\overline{x},\lambda))}
& \leqslant K\lVert{e^{v}}\rVert_{L^q(B_v^+(\overline{x},\lambda))} \lVert{v-v_{\overline{x},\lambda}}\rVert_{L^n(B_v^+(\overline{x},\lambda))},
\end{aligned}
\end{equation}
and that
\begin{equation}\label{e2}
\begin{aligned}
& \quad \lVert{v-v_{\overline{x},\lambda}}\rVert_{L^n(B_v^+(\overline{x},\lambda))} \\
& \leqslant K'\lambda^{2\varepsilon_0}\lVert{u}\rVert_{L^{\infty}(B_u^+(\overline{x},\lambda))}\lVert{u-u_{\overline{x},\lambda}}\rVert_{L^{q}(B_u^+(\overline{x},\lambda))}\lVert{u^2}\rVert_{L^\frac{n}{n-2}(E_P^+(\overline{x},\lambda))} \\
& \qquad + K'\lambda^{2\varepsilon_0}\lVert{u-u_{\overline{x},\lambda}}\rVert_{L^{q}(E_u^+(\overline{x},\lambda))}\lVert{P_{x,\lambda}(u+u_{\overline{x},\lambda})}\rVert_{L^\infty(E_u^+(\overline{x},\lambda))}.
\end{aligned}
\end{equation}
By the the local boundedness of $P,u,v$, we have
$$
\begin{aligned}
\lVert{e^{v}}\rVert_{L^q(B_v^+(x,\lambda))}
&\leqslant\lVert{e^{v}}\rVert_{L^q(A(\overline{x},\lambda,\varepsilon,\delta))} \\
&\leqslant  2\lVert{e^v}\rVert_{L^\infty(B(\overline{x},\overline{\lambda}+1))}|A(\overline{x},\lambda,\varepsilon,\delta)|^{\frac{1}{q}} \\
&\leqslant C(\delta+\varepsilon)^{\frac{n-1}{q}},
\end{aligned}
$$
and
$$
\lambda^{2\varepsilon_0}\lVert{u}\rVert_{L^{\infty}(B_u^+(\overline{x},\lambda))}\leqslant (\overline{\lambda}+1)^{2\varepsilon_0}\lVert{u}\rVert_{L^{\infty}(B_u^+(\overline{x},\overline{\lambda}+1))}\leqslant C_1,
$$
$$
\lambda^{2\varepsilon_0}\lVert{P_{x,\lambda}(u+u_{x,\lambda})}\rVert_{L^\infty(E_u^+(x,\lambda))}\leqslant 2(\overline{\lambda}+1)^{2\varepsilon_0}\lVert{Pu}\rVert_{L^\infty(B(\overline{x},\overline{\lambda}+1))}\leqslant C_2.
$$
Thus we can choose $\delta,\varepsilon$ small enough and deduce from \eqref{e1} and \eqref{e2} that $B_u^+(x,\lambda) = B_v^+(x,\lambda) = \varnothing$ for $\lambda \in (\overline{\lambda}(\overline{x}),\overline{\lambda}(\overline{x})+\varepsilon)$, which contradicts the definition of $\overline{\lambda}(\overline{x})$. So we must have $\overline{\lambda}(\overline{x}) = \infty$ in Case (i) $1\leq \alpha<n+1$.

From the conclusion (ii) in Lemma \ref{Calculus Lemma} and the finite mass condition \eqref{fm}, we must have $u\equiv0$, which will lead to a contradiction again by the first equation in system \eqref{system}. Hence, Case (i) $1\leq \alpha<n+1$ can not happen.

\medskip

\textit{Case} (ii) $\alpha> n+1$. Let us define
\begin{equation}\label{Definition of Infimum Limiting Radius}
\overline{\lambda}(x) = \inf\left\{\mu > 0 | (u_{x,\lambda}-u)(\xi) \leqslant 0 \;\text{and}\; (v_{x,\lambda}-v)(\xi) \leqslant 0, \forall 0 < \lambda < \mu , \xi \in B(x,\lambda) \backslash \{x\}\right\}.
\end{equation}
We will show that, for any $\overline{x}\in \mathbb{R}^n$, $\overline{\lambda}(\overline{x}) = 0$.

By the definition of $\overline{\lambda}(\overline{x})$ and continuity, we have
$$
u(\xi) \geqslant u_{\overline{x},\overline{\lambda}(\overline{x})}(\xi), \,\quad \, v(\xi) \geqslant v_{\overline{x},\overline{\lambda}(\overline{x})}(\xi), \qquad \forall \xi \in B(\overline{x},\overline{\lambda}(\overline{x})).
$$
By the positivity of kernel $M(x,\lambda;\xi,z)$, we obtain that for $\xi \in B(\overline{x},\overline{\lambda}(\overline{x}))$, $P(\xi)\geqslant P_{\overline{x},\overline{\lambda}(\overline{x})}(\xi)$. Thus by the positivity of kernel $L(x,\lambda;\xi,z)$, we have that, for $\xi \in B(\overline{x},\overline{\lambda}(\overline{x}))$,
$$
\begin{aligned}
v_{\overline{x},\lambda}(\xi)-v(\xi) & = \left[n+1-\alpha\right]\ln\left(\frac{\lambda}{|\xi-\overline{x}|}\right) \\
&\qquad+C_{v,n}\int_{B(\overline{x},\lambda)}{L(\overline{x},\lambda;\xi,z)\left(P_{\overline{x},\lambda}(z)u_{\overline{x},\lambda}^2(z)-P(z)u^2(z)\right)\mathrm{d}z} \\
&\leqslant[n+1-\alpha]\ln\left(\frac{\lambda}{|\xi-\overline{x}|}\right)<0.
\end{aligned}
$$
Then again by the positivity of Kernel $K(x,\lambda;\xi,z)$, we have, for $\xi \in B(\overline{x},\overline{\lambda}(\overline{x}))$,
$$
u_{\overline{x},\lambda}(\xi)-u(\xi) = C_{u,n}\int_{B(\overline{x},\lambda)}{K(x,\lambda;\xi,z)\left(e^{v_{\overline{x},\lambda}(z)}-e^{v(z)}\right)\mathrm{d}z}<0.
$$
Then for small $\delta$, $u_{\overline{x},\overline{\lambda}(\overline{x})}(\xi)-u(\xi)$ is a $C^1$ negative function defined on compact subset $\overline{B}(\overline{x},\overline{\lambda}(\overline{x})-\delta)\backslash B(\overline{x},\delta)$, so is $v_{\overline{x},\overline{\lambda}(\overline{x})}(\xi)-v(\xi)$. As a consequence, they must attain their negative maximum, that is, there exists $C_0(\delta) > 0$ such that
$$
\left\{
\begin{aligned}
u_{\overline{x},\overline{\lambda}(\overline{x})}(\xi)-u(\xi)  & \leqslant -C_0, \\
v_{\overline{x},\overline{\lambda}(\overline{x})}(\xi)-v(\xi)  & \leqslant -C_0,
\end{aligned}
\right.
\qquad
\forall \xi \in \overline{B}(\overline{x},\overline{\lambda}(\overline{x})-\delta)\backslash B(\overline{x},\delta).
$$
So by the uniform continuity w.r.t. $\lambda$ of $u_{x,\lambda},v_{x,\lambda}$ in compact subset, there exists sufficiently small $\varepsilon(\delta) > 0$, such that for all $\lambda \in (\overline{\lambda}(\overline{x}),\overline{\lambda}(\overline{x})-\varepsilon)$, we have
$$
\left\{
\begin{aligned}
u_{\overline{x},\lambda}(\xi) - u(\xi) &<-\frac{C_0}{2}, \\
v_{\overline{x},\lambda}(\xi) - v(\xi) &<-\frac{C_0}{2},
\end{aligned}
\right.
\qquad
\forall \xi \in \overline{B}(\overline{x},\overline{\lambda}(\overline{x})-\delta)\backslash B(\overline{x},\delta).
$$
Thus $B_u^-(\overline{x},\lambda) = B_v^-(\overline{x},\lambda)\subset A(\overline{x},\lambda,\varepsilon,\delta)$. For the sake of simplicity, we denote $\overline{\lambda}(\overline{x})$ by $\overline{\lambda}$.

\medskip

Now we prove the following Lemma.
\begin{lem}\label{annulus integral bound}
Suppose $f$ is a nonnegative $C^1$ function such that $\limsup\limits_{|z|\to\infty}f(z)|z|^K$ exists, then let
$$
f_{\overline{x},\lambda}(\xi) = \frac{\lambda^{L}}{|\xi-\overline{x}|^{L}}f(\xi^{\overline{x},\lambda}),
$$
where $K\geqslant L>0$, then we have that, for $p>0$ and $\lambda \in (\overline{\lambda},\overline{\lambda}-\varepsilon)$,
$$
\int_{A(\overline{x},\lambda,\varepsilon,\delta)}f_{\overline{x},\lambda}^p(z)\mathrm{d}z \leqslant I_{\overline{x}} |A(\overline{x},\lambda,\varepsilon,\delta)|
$$
for $\delta<\delta_{\overline{x}}$ and $\varepsilon<\varepsilon_{\overline{x}}$, where $I_{\overline{x}}, \delta_{\overline{x}}$ are constants that depend on $\overline{x}$ but independent of $\lambda$.
\end{lem}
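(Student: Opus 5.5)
The plan is to show that $f_{\overline{x},\lambda}$ is bounded on $A(\overline{x},\lambda,\varepsilon,\delta)$, uniformly in the admissible $\lambda$, $\delta$, $\varepsilon$. The estimate is then immediate:
$$
\int_{A(\overline{x},\lambda,\varepsilon,\delta)}f_{\overline{x},\lambda}^p(z)\,\mathrm{d}z\leqslant \Big(\sup_{A(\overline{x},\lambda,\varepsilon,\delta)}f_{\overline{x},\lambda}\Big)^p\,|A(\overline{x},\lambda,\varepsilon,\delta)|.
$$
By construction, $A(\overline{x},\lambda,\varepsilon,\delta)$ is the union of two pieces:
\begin{itemize}
\item the small ball $B(\overline{x},\delta)$;
\item the thin annulus $\{\overline{\lambda}-\delta<|\xi-\overline{x}|<\lambda\}$.
\end{itemize}
I treat them separately. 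Throughout, $\lambda$ ranges in a fixed compact interval around $\overline{\lambda}$, say $[\overline{\lambda}/2,\,2\overline{\lambda}]$; this holds once $\varepsilon<\varepsilon_{\overline{x}}\leqslant\overline{\lambda}/2$. The case $\overline{\lambda}>0$ is the relevant one, since we are arguing by contradiction against $\overline{\lambda}(\overline{x})>0$.

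\emph{The annulus piece.} If $\overline{\lambda}-\delta<|\xi-\overline{x}|<\lambda$ with $\delta<\delta_{\overline{x}}\leqslant\overline{\lambda}/2$, then $|\xi-\overline{x}|\geqslant\overline{\lambda}/2$. The reflected point satisfies $|\xi^{\overline{x},\lambda}-\overline{x}|=\lambda^2/|\xi-\overline{x}|\leqslant 8\overline{\lambda}$. The prefactor satisfies $\lambda^L/|\xi-\overline{x}|^L\leqslant 4^L$. Hence $f_{\overline{x},\lambda}(\xi)\leqslant 4^L\max_{\overline{B(\overline{x},8\overline{\lambda})}}f$, which is finite because $f$ is continuous.

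\emph{The small-ball piece.} If $|\xi-\overline{x}|<\delta$, then $z:=\xi^{\overline{x},\lambda}$ satisfies
$$
|z-\overline{x}|=\frac{\lambda^2}{|\xi-\overline{x}|}>\frac{\overline{\lambda}^2}{4\delta}.
$$
Rewriting the prefactor as $\lambda^L/|\xi-\overline{x}|^L=|z-\overline{x}|^L/\lambda^L$ gives
$$
f_{\overline{x},\lambda}(\xi)=\lambda^{-L}\,|z-\overline{x}|^L f(z).
$$
The hypothesis that $\limsup_{|z|\to\infty}f(z)|z|^K$ is finite provides $R$ and $C$ with $f(z)\leqslant C|z|^{-K}$ for $|z|>R$. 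Choose $\delta_{\overline{x}}$ small enough that $\overline{\lambda}^2/(4\delta)>|\overline{x}|+R$ and $>2|\overline{x}|$. Then $|z|>R$ and $|z-\overline{x}|\leqslant 2|z|$, so
$$
f_{\overline{x},\lambda}(\xi)\leqslant (\overline{\lambda}/2)^{-L}\,2^L C\,|z|^{L-K}\leqslant C',
$$
where the last step uses $K\geqslant L$ and $|z|\geqslant 1$. This is exactly where the assumption $K\geqslant L$ enters.

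Combining the two pieces gives a bound $\sup_A f_{\overline{x},\lambda}\leqslant M_{\overline{x}}$ independent of $\lambda,\delta,\varepsilon$, and we set $I_{\overline{x}}=M_{\overline{x}}^p$.

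The main obstacle is bookkeeping rather than analysis. The admissible range of $\lambda$ is written in the statement as $(\overline{\lambda},\overline{\lambda}-\varepsilon)$ and must be read as a small neighbourhood of $\overline{\lambda}$. The constants must be kept independent of $\lambda$, which the compactness of that neighbourhood ensures. One also needs $\overline{\lambda}>0$ so that the annulus stays away from $\overline{x}$; if $\overline{\lambda}=0$ there is nothing to prove, since that is the desired conclusion.
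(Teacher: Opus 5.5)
Your proof is correct. It splits $A(\overline{x},\lambda,\varepsilon,\delta)$ the same way the paper does, into $B(\overline{x},\delta)$ and the thin annulus. It also uses the same two inputs: on the piece the inversion sends to infinity, the decay $f(z)\leqslant C|z|^{-K}$ together with $K\geqslant L$; on the annulus, boundedness of $f$ on a compact set. The difference is in execution.

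You bound $f_{\overline{x},\lambda}$ pointwise on $A$, uniformly in $\lambda,\delta,\varepsilon$, and then multiply by $|A|$. The paper instead imitates Lemma \ref{integral bound}. It pulls each integral back through $z\mapsto z^{\overline{x},\lambda}$ and integrates $f^p(z)(\lambda/|z-\overline{x}|)^{2n-pL}$ over two regions: the exterior of $B(\overline{x},\lambda^2\delta^{-1})$, and the bounded annulus $B(\overline{x},\lambda^2(\overline{\lambda}-\delta)^{-1})\setminus B(\overline{x},\lambda)$. This gives bounds $C\delta^n$ and $C(\delta+\varepsilon)^{n-1}$, which must then be compared with $|A|$.

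What each route buys:
\begin{itemize}
\item The integral route gives explicit powers of $\delta$. On the ball piece it would still give smallness under the weaker condition $p(L-K)<n$, where your pointwise bound is no longer available.
\item Your route gives the factor $|A|$ directly, which is exactly what the statement asks for. It also sidesteps a slip in the paper: the image annulus has measure of order $(\lambda-\overline{\lambda}+\delta)^{+}\leqslant\delta+\varepsilon$, not $(\delta+\varepsilon)^{n-1}$.
\end{itemize}

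You were right to make two implicit assumptions explicit. First, $\overline{\lambda}>0$ is used; the paper needs it for $\delta_{\overline{x}}\leqslant\overline{\lambda}/2$ and for $\lambda_s$. Second, the interval $(\overline{\lambda},\overline{\lambda}-\varepsilon)$ must be read as a small neighbourhood of $\overline{\lambda}$. The only cosmetic point is to take $R\geqslant1$, so that $|z|^{L-K}\leqslant1$.
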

\begin{proof}
Similar to Lemma \ref{integral bound}, we have
$$
\begin{aligned}
\int_{B(\overline{x},\delta)}f_{\overline{x},\lambda}^p(z)\mathrm{d}z
& = \int_{B(\overline{x},\lambda^2\delta^{-1})^c}[f(z)|z|^K]^p\left(\frac{\lambda}{|z-\overline{x}|}\right)^{2n-pL}|z|^{-pK}\mathrm{d}z \\
&\leqslant \lambda^{2n-pL}\int_{B(\overline{x},\lambda^2\delta^{-1})^c}\frac{C^p}{|z|^{2n+p(K-L)}}\mathrm{d}z \\
& = C_p\left(\frac{\delta}{\lambda^2}\right)^{n+p(K-L)}\lambda^{2n-pL}\\
&\leqslant C_p(\overline{\lambda}+1)^{10n+pK}\delta^n.
\end{aligned}
$$
Choose $\delta_{\overline{x}}\leqslant\frac{\overline{\lambda}}{2}$, and set $\lambda_s = 2(\overline{\lambda}+1)^2(\overline{\lambda})^{-1}$, we get
$$
\begin{aligned}
\int_{B(\overline{x},\lambda)\backslash B(\overline{x},\overline{\lambda}-\delta)}f_{\overline{x},\lambda}^p(z)\mathrm{d}z
&= \int_{B(\overline{x},\lambda^2(\overline{\lambda}-\delta)^{-1})\backslash B(\overline{x},\lambda)}[f(z)]^p\left(\frac{\lambda}{|z-\overline{x}|}\right)^{2n-pL}\mathrm{d}z \\
&\leqslant C_p\lVert{f^p}\rVert_{L^\infty(B(\overline{x},\lambda_s))}\lambda_s^{2n-pL}(\delta+\varepsilon)^{n-1}.
\end{aligned}
$$
Combining the above two inequalities together, we get the desired estimate in Lemma \ref{annulus integral bound}.
\end{proof}

From the above analysis, we deduced that
\begin{equation}\label{e3}
\begin{aligned}
\lVert{u_{\overline{x},\lambda}-u}\rVert_{L^{q}(B_u^-(\overline{x},\lambda))}
& \leqslant K\lVert{e^{v_{\overline{x},\lambda}}}\rVert_{L^n(B_v^-(\overline{x},\lambda))} \lVert{v_{\overline{x},\lambda}-v}\rVert_{L^q(B_v^-(\overline{x},\lambda))},
\end{aligned}
\end{equation}
and that
\begin{equation}\label{e4}
\begin{aligned}
& \quad \lVert{v_{\overline{x},\lambda}-v}\rVert_{L^q(B_v^-(\overline{x},\lambda))} \\
& \leqslant
K\lambda^{n-1}\lVert{u_{\overline{x},\lambda}-u}\rVert_{L^{q}(E_u^-(\overline{x},\lambda))}\lVert{u_{\overline{x},\lambda}}\rVert_{L^s(E_u^-(\overline{x},\lambda))}\lVert{u_{\overline{x},\lambda}^2}\rVert_{L^q(E_P^-(\overline{x},\lambda))} \\
& \qquad + K\lambda^{n-1}\lVert{u_{\overline{x},\lambda}-u}\rVert_{L^{q}(E_u^-(\overline{x},\lambda))}\lVert{P(u+u_{\overline{x},\lambda})}\rVert_{L^n(E_u^-(\overline{x},\lambda))}.
\end{aligned}
\end{equation}

By Lemma \ref{annulus integral bound}, we can choose $\delta,\varepsilon$ small enough and deduce from \eqref{e3} and \eqref{e4} that $B_u^-(x,\lambda) = B_v^-(x,\lambda) = \varnothing$ for $\lambda \in (\overline{\lambda}(\overline{x}),\overline{\lambda}(\overline{x})-\varepsilon)$. This contradicts the definition of $\overline{\lambda}(\overline{x})$. Thus we must have $\overline{\lambda}(\overline{x}) = 0$ in Case (ii) $\alpha>n+1$.

From the conclusion (ii) in Lemma \ref{Calculus Lemma} (replacing $u$ by $-u$ therein), we deduce that $u\equiv C$ for some constant $C$. Due to the finite mass condition \ref{fm}, we must have $u\equiv0$. However, by the first equation in the system \eqref{system}, we have
$$0=e^{v(x)}>0 \qquad \text{in} \,\, \mathbb{R}^{n}.$$
This is a contradiction and hence Case (ii) $\alpha>n+1$ is impossible.

\bigskip

From the contradictions derived in both Cases (i) and (ii), we conclude that
\begin{equation}\label{a8}
  \alpha:=C_{n}\int_{\mathbb{R}^n}{P_n(x)\mathrm{d}x}=n+1,
\end{equation}
where $C_{n}=\frac{1}{2\pi^{2}}$ if $n=3$ and $C_{n}=\frac{1}{8\pi^{2}}$ if $n=4$. In this case, we deduce from Step 1 that, for $\lambda>0$ large,
 \begin{equation}\label{a9}
 	u_{x,\lambda}(\xi)-u(\xi)\leq 0 \quad \text{and} \quad v_{x,\lambda}(\xi)-v(\xi)\leq 0, \qquad \forall \xi \in B(x,\lambda)\setminus\{x\};
 \end{equation}
while for $\lambda>0$ small,
\begin{equation}\label{a10}
	u_{x,\lambda}(\xi)-u(\xi)\geq 0 \quad \text{and} \quad v_{x,\lambda}(\xi)-v(\xi)\geq 0, \qquad \forall \xi \in B(x,\lambda)\setminus\{x\}.
\end{equation}
If the critical scale (defined in \eqref{Definition of Supremum Limiting Radius}) $\overline{\lambda}(x)<+\infty$, then we must have $u_{x,\overline{\lambda}(x)}=u$ and $v_{x,\overline{\lambda}(x)}=v$ in $B(x,\overline{\lambda}(x))\setminus\{x\}$, or else the sphere $S_{\lambda}$ can be moved a bit further such that \eqref{a10} still hold (see \emph{Case (ii)} or \emph{Case (i)} in Step 2), which contradicts the definition \eqref{Definition of Supremum Limiting Radius} of $\overline{\lambda}(x)$. If the critical scale (defined in \eqref{Definition of Supremum Limiting Radius}) $\overline{\lambda}(x)=+\infty$, it follows from \eqref{a9} that $u_{x,\lambda}=u$ and $v_{x,\lambda}=v$ in $B(x,\lambda)\setminus\{x\}$ for $\lambda$ sufficiently large. As a consequence, for arbitrary $x\in\mathbb R^n$, there exists a $\lambda>0$ (depending on $x$) such that
\begin{equation}\label{m42}
	u_{x,\lambda}(\xi)=u(\xi) \quad \text{and} \quad v_{x,\lambda}(\xi)=v(\xi), \qquad \forall \xi \in B(x,\lambda)\setminus\{x\}.
\end{equation}
Then, we infer from the conclusion (i) in Lemma \ref{Calculus Lemma} that, for some $C\in\mathbb{R}$, $\mu>0$ and $x_0\in \mathbb{R}^n$, $u$ must be of the form
\begin{equation}\label{m43}
	u(x)=C{\left(\frac{\mu}{1+\mu^{2}|x-x_0|^2}\right)}^{\frac{n-1}{2}}, \qquad \forall \, x\in\mathbb{R}^{n},
\end{equation}
which combined with the first equation in system \eqref{system} and the asymptotic behavior of $v$ in Lemmas \ref{lemma Integral Representation of v for n = 3} and \ref{lemma Integral Representation of v for n = 3} imply
\begin{equation}\label{m44}
	v(x)=\frac{n+1}{2}\ln\left[\frac{C'\mu}{1+\mu^{2}|x-x_0|^2}\right], \qquad \forall \, x\in\mathbb{R}^{2}.
\end{equation}
Then, by the formula \eqref{a8} and direct calculations, we can obtain that $C=\frac{2\cdot2^{\frac{1}{4}}}{\sqrt{\pi}}$ if $n=3$, $C=\frac{2\cdot 30^{\frac{1}{4}}}{\sqrt{\pi}}$ if $n=4$, and hence
\begin{equation}\label{e5}
u(x)=\frac{2\cdot2^{\frac{1}{4}}\mu}{\sqrt{\pi}\left(1+\mu^{2}|x-x_{0}|^{2}\right)},\qquad \text{if} \,\, n=3,
\end{equation}
\begin{align}\label{e6}
u(x)=\frac{2\cdot30^{\frac{1}{4}}}{\sqrt{\pi}}\left(\frac{\mu}{1+\mu^{2}|x-x_{0}|^{2}}\right)^
\frac{3}{2}, \qquad \text{if} \,\, n=4.
\end{align}
Moreover, by the asymptotic behavior of $u$ in Lemmas \ref{lemma Asymptotic Property for n = 3} and \ref{lemma Asymptotic Property for n = 4}, one has
$$
\beta:= \frac{1}{2\pi^2}\int_{\mathbb{R}^4}{e^{pv(x)}\mathrm{d}x}=\frac{2\cdot2^{\frac{1}{4}}}{\sqrt{\pi}}\frac{1}{\mu}, \qquad \text{if} \,\, n=3,
$$
$$
\beta:= \frac{1}{4\pi^2}\int_{\mathbb{R}^4}{e^{pv(x)}\mathrm{d}x}=\frac{2\cdot30^{\frac{1}{4}}}{\sqrt{\pi}}\frac{1}{\mu^{\frac{3}{2}}}, \qquad \text{if} \,\, n=4.
$$
Combining this with \eqref{m44} yield that $C'=\frac{2}{\sqrt[4]{\pi}}2^{\frac{1}{8}}$ if $n=3$, $C'=\frac{\sqrt{6}}{\sqrt[5]{\pi}}5^{\frac{1}{10}}$ if $n=4$, and hence
\begin{equation}\label{e7}
v(x)=2\ln\left(\frac{\frac{2}{\sqrt[4]{\pi}}2^{\frac{1}{8}}\mu}
{1+\mu^{2}|x-x_{0}|^{2}}\right), \qquad \text{if} \,\, n=3,
\end{equation}
\begin{align}\label{e8}
v(x)=\frac{5}{2}\ln\left(\frac{\frac{\sqrt{6}}{\sqrt[5]{\pi}}5^{\frac{1}{10}}\mu}
{1+\mu^{2}|x-x_{0}|^{2}}\right), \qquad \text{if} \,\, n=4.
\end{align}
In addition, one can verify by calculations that $(u,v)$ given by \eqref{e5}, \eqref{e6}, \eqref{e7} and \eqref{e8} is indeed a pair of solutions to the PDE system \eqref{system}. This concludes our proof of Theorem \ref{theorem Main Result}. \qed

\end{document}